\newtheorem{theorem}{Theorem}[section]
\newtheorem{corollary}[theorem]{Corollary}
\newtheorem{proposition}[theorem]{Proposition}
\newtheorem{claim}{Claim}
\newtheorem{lemma}[theorem]{Lemma}
\newtheorem*{theorem*}{Theorem}
\newtheorem{assumption}[theorem]{Assumptions}
\theoremstyle{remark}
\newtheorem{remark}[theorem]{Remark}
\newtheorem{example}[theorem]{Example}
\theoremstyle{definition}
\newtheorem{definition}[theorem]{Definition}
\DeclareMathOperator*{\argmax}{arg\,max}
\newcommand{\cE}{\mathcal{E}}
\newcommand{\cF}{\mathcal{F}}
\newcommand{\de}{\operatorname{d}}
\newcommand{\sP}{\mathscr{P}}
\DeclareMathOperator{\p}{\mathbb{P}}
\title{Simultaneous Cutoff on the Multitype Configuration Model}
\author[1]{John Fernley}
\author[1,2]{Bal\'azs Gerencs\'er}
\affil[1]{HUN-REN Alfr\'ed R\'enyi Institute of Mathematics,
            Budapest, 
            Hungary}
\affil[2]{ELTE E\"otv\"os Lor\'and University,
            Budapest, 
            Hungary}
          \affil[ ]{\small\{fernley.john, gerencser.balazs\}@\hspace{0.1em}renyi.hu}
\date{\today}
\newcommand{\D}{\mathbb{D}}
\newcommand{\al}{\alpha}
\newcommand{\Pab}{\mathscr{P}_A}
\begin{document}

\maketitle

\begin{abstract}
We find Gaussian cutoff profiles for the total variation distance to stationarity of a random walk on a multiplex network: a finite number of directed configuration models sharing a vertex set, each with its own bounded degree distribution and edge probability. 
Further we consider the minimal total variation distance over this space of possible doubly stochastic edge probabilities at each point in time. 
Looking at all possible dynamics simultaneously on one realisation of the random graph, we find that this sequence of minimal distances converges in probability to the same cutoff profile as the chain with entropy maximising transition probabilities.


\vspace{1.5em}
\center{\tiny
Keywords: multiplex network; random walk cutoff; mixing time optimisation; Markov chain central limit.

Mathematics Subject Classification: Primary 05C81; Secondary 05C80, 60J10.
}

\end{abstract}


\makeatletter
\let\orgdescriptionlabel\descriptionlabel
\renewcommand*{\descriptionlabel}[1]{%
  \let\orglabel\label
  \let\label\@gobble
  \phantomsection
  \edef\@currentlabel{#1\unskip}%
  \let\label\orglabel
  \orgdescriptionlabel{#1}%
}
\makeatother

\section{Introduction}

A multiplex network on vertex set $[N]=\{1,\dots,N\}$ has $I\geq 2$ \emph{layers} containing different types of connectivity information \cite{bianconi2013statistical}. 
We construct such a network through a directed configuration model defined by a set $\D$ of $I$-layer directed vertex degrees $\al \in [\Delta]^{2I}$ (so crucially with $\min\al\geq 1$):
\[
\alpha=(\al_1^+,\al_1^-,\dots,\al_I^+,\al_I^-) ,
\]
each vertex $v\in [N]$ having a type $\tau(v) \in \D$, leading to a frequency in the network $k(\al)=\sum_{v =1}^N \mathbbm{1}_{\tau(v)=\alpha}$. To construct a multiplex configuration model, we require
\[
\sum_{\al \in \D} k(\al)\al_i^+
=\sum_{\al \in \D} k(\al)\al_i^-
\]
so that by sampling an independent uniform bijection from heads to tails for each layer $i \in [I]$ we produce the directed edges of each graph $(A^{(i)}_{vw})_{v,w \in [N]}$. Essentially, what we have constructed is $I$ independent directed configuration models (each defining a layer) with a prescribed identification of the vertices in each.

Each layer of the multiplex network represents a different type of connection and so to a directed edge in layer $i \in [I]$ we attach probability $p_i$ for the walker. To obtain a Markov process, this requires
\[
\forall \al \in \D, \quad
\sum_{i \in [I]} p_i \al_i^+=1
\] 
and we will further restrict our attention to the family of doubly stochastic walks
\[
\forall \al \in \D, \quad
\sum_{i \in [I]} p_i \al_i^-=1
\]
so that we can compare mixing speeds to the same stationary distribution which is, on the high probability event of a strongly connected graph, the uniform $\nicefrac{1}{N}$. This suggests the stationary \emph{layer} of a walk on the multiplex network has distribution
\[
\pi(i)=  \sum_{\al \in \D} \frac{k(\al)}{N} \al_i^+ p_i
\]
and this is also the stationary distribution of the layer Markov chain $(X_k)_k$ which is an approximation to the true 
layer walk, with transition matrix
\begin{equation}\label{eq_layer_chain}
L_{ij}=
\frac{\sum_{\al \in \D} k(\al) \al_i^- \al_j^+ p_j}{\sum_{\al \in \D} k(\al) \al_i^-}.
\end{equation}

Mixing of the walker on a heuristic level requires that the walker takes a path which had probability approximately $\nicefrac{1}{N}$. 
If this Markov chain $L$ is ergodic, the path probability at time $t$ should decay like
\[
e^{t \,\mathbb{E}_{\pi}(\log p_i)}
\]
which is why we define
\begin{equation}\label{eq_mu_def}
\mu_p=\mathbb{E}_{\pi}\left(\log \frac{1}{p_i} \right),
\qquad
t_p=\frac{\log N}{\mu_p}
\end{equation}
as the entropy rate and cutoff time. 
Zooming in to see the correct cutoff timescale will require a central limit approximation, for which we need to understand the dynamical variance $\sigma^2_p$ and the associated window scale
\begin{equation}\label{eq_sigma_def}
\frac{1}{n}\mathbb{V}{\rm ar}\left( \sum_{k=1}^n \log p_{X_k} \right) 
\stackrel{\mathbb{P}}{\rightarrow}
\sigma^2_p,
\qquad
w_p=\sigma_p \sqrt{\frac{\log N}{\mu_p^3}}.
\end{equation}

This dynamical variance is necessarily the least explicit parameter, but we know the limit exists from the alternative definition of \cite{kloeckner2019} and will see in Lemma  \ref{lemma_dynamical_variance_bounds} that it can be loosely approximated by the stationary variance $\mathbb{V}{\rm ar}_\pi \log p_i$.


\subsubsection*{Notation}

We use the standard Landau notation e.g. $f=O(g)$, with subscript $f=O_{\p}(g)$ if the inequality defining the order holds with high probability as $N\rightarrow\infty$.
We further use the following notation to neglect smaller orders
\[
f(N) \lesssim g(N)
\iff
f(N)
\leq
g(N)
(1+o(1)).
\]

\section{Main Results}



N is taken sufficiently large in all results. The walk on $[N]$ which picks each out-edge of layer $i$ with probability $p_i$ has a transition matrix which we write as $P$, and then our interest is in the total variation convergence of this walk.

\begin{definition}
For the distance and vector
\[
\| v \|_{\rm TV}=\frac{1}{2}\sum_{i \in [N]}|v_i|,
\qquad
\pi=\frac{1}{N}\mathbbm{1_{[N]}},
\]
we measure the distance from stationarity with
\[
\mathcal{D}_{v}(t)=
\left\|
P^t(v,\cdot)-\pi
\right\|_{\text{TV}},
\qquad
\mathcal{D}(t)=\max_v \mathcal{D}_{v}(t \vee 0).
\]
\end{definition}

To avoid the case of constant edge probability, where the layer structure is trivialised, we will impose that the configuration model in question doesn't have all outdegrees and indegrees equal to the same constant.

\begin{assumption}\label{assumptions}
\begin{enumerate}[label={(\alph*)},ref={\theproposition~(\alph*)}]
\item There exists $p$ with $0< p \in \mathscr{P}$ (a positive solution to \eqref{eq_p_def}).
\item \label{ass_different_outdegree}
At least one pair of types $\al, \beta \in \D$ has either differing total outdegree $\sum \al^+ \neq \sum\beta^+$ or indegree $\sum \al^- \neq \sum\beta^-$.
\end{enumerate}
\end{assumption}

\begin{remark}
In the absence of Assumption \ref{ass_different_outdegree}, the constant vector is always a solution for $p$ and by differentiating $\mu$ we can verify that it's a stationary point in the directions permitted in $\mathscr{P}$.
\end{remark}

\begin{restatable}{theorem}{optimisercutoff}
\label{thm_optimiser_cutoff}
Write $w_\bigstar$ and $t_\bigstar$ for the shape parameters of \eqref{eq_mu_def} and \eqref{eq_sigma_def} with probabilities
\[
p_\bigstar:=\argmax_{p \in \sP} \mu_p,
\]
then with Assumptions \ref{assumptions}
\[
\sup_{t >0}
\left|
\inf_{p \in \sP}
\mathcal{D}(t)-1+\Phi\left(
\frac{t-t_\bigstar}{w_\bigstar}
\right)
\right|
\stackrel{\p}{\rightarrow}
 0.
\]
\end{restatable}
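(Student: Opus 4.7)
My plan is to sandwich $\inf_{p \in \sP} \mathcal{D}(t)$ between two copies of the Gaussian profile at shape $(t_\bigstar, w_\bigstar)$. The upper bound is immediate: since $p_\bigstar \in \sP$ is admissible, $\inf_{p \in \sP} \mathcal{D}(t) \leq \mathcal{D}_{p_\bigstar}(t)$, and the single-dynamics cutoff result established earlier in the paper, applied at the fixed choice $p_\bigstar$, delivers Gaussian convergence with the claimed shape uniformly in $t$.

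The substance is in the matching lower bound: no admissible $p$ should substantially beat $p_\bigstar$. The structural ingredient is that $p_\bigstar$ uniquely maximises the strictly concave entropy rate $\mu_p$ on the relative interior of the polytope $\sP$ (Assumption \ref{assumptions} rules out the degenerate symmetric solution, and the relevant map is a positively weighted $-p\log p$ which is strictly concave), giving a quadratic gap $\mu_p \leq \mu_\bigstar - c\|p-p_\bigstar\|^2$ near the maximiser and a uniform separation $\mu_p \leq \mu_\bigstar - c(\delta)$ outside any ball of radius $\delta$. At any individual $p \in \sP$ the single-$p$ cutoff gives $\mathcal{D}_p(t) \geq \Phi((t_p - t)/w_p) - o_\p(1)$. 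For $p$ bounded away from $p_\bigstar$, the shift $t_p - t_\bigstar = \Omega(\log N)$ swamps the window $w_\bigstar = \Theta(\sqrt{\log N})$, making this lower bound effectively $1 - o(1)$ on $[t_\bigstar - Kw_\bigstar, t_\bigstar + K w_\bigstar]$; for $p$ close to $p_\bigstar$, smoothness of $(t_p, w_p, \sigma_p)$ in $p$ (the last via Lemma \ref{lemma_dynamical_variance_bounds}) keeps $\Phi((t_p-t)/w_p)$ within $O(\|p-p_\bigstar\|)$ of $\Phi((t_\bigstar - t)/w_\bigstar)$.

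To lift these pointwise bounds into an infimum over the continuous family, I cover $\sP$ by an $\epsilon_N$-net with $\epsilon_N = o(1/\log N)$ (of cardinality polynomial in $\log N$), union-bound the single-$p$ cutoff event over the net, and interpolate via the coupling estimate $|\mathcal{D}_p(t) - \mathcal{D}_{p'}(t)| \leq t \cdot \max_v \|P_p(v,\cdot) - P_{p'}(v,\cdot)\|_{\rm TV} = O(t\,\|p - p'\|)$, which is $o(1)$ throughout the window since $t = O(\log N)$. Uniformity in $t$ is handled by restricting to $|t - t_\bigstar| \leq K w_\bigstar$, where the profile spans $[o(1), 1 - o(1)]$ and the preceding estimates apply, and arguing triviality of both bounds outside that window (where the target profile is within $o(1)$ of $0$ or $1$).

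The main obstacle I anticipate is the borderline regime $\|p - p_\bigstar\| \asymp (\log N)^{-1/4}$: here the cutoff-time shift $t_p - t_\bigstar$ is exactly of order $w_\bigstar$, so neither the "close" Taylor argument nor the "far" swamping argument individually controls the Gaussian profile at $p$, and the net-plus-Lipschitz patching must smoothly bridge the two. A secondary technicality is that the precursor single-$p$ cutoff theorem must come with a polynomially decaying failure probability for the union bound over the polynomially sized net to remain $o_\p(1)$; this is consistent with its underlying Markov-chain-CLT architecture but needs to be threaded through explicitly.
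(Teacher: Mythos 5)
Your overall architecture (upper bound from $p_\bigstar$ itself via the single-dynamics cutoff, lower bound by showing no $q$ beats $p_\bigstar$, uniformity via a polylog-sized net) matches the paper's, but there are two genuine gaps. The first and largest: you invoke ``the single-$p$ cutoff gives $\mathcal{D}_p(t)\geq \Phi((t_p-t)/w_p)-o_\p(1)$'' at \emph{every} $p\in\sP$, but $\sP$ contains degenerate vectors with $p_*\to 0$ (or $p_i=0$ outright, or $p^*\to 1$), where the Gaussian cutoff theorem simply does not hold --- the contraction constant $c=1/\sqrt{p_*}$ and the Berry--Esseen error blow up, and $t_p,w_p$ are uncontrolled. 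The simultaneous cutoff (Theorem \ref{thm_simultaneous_cutoff}) is only proved on $\Pab$, i.e.\ $p_*\geq A\log^{-1/3}N$. For the lower bound you only need $\mathcal{D}_q(t)\geq 1-o(1)$ on the complement, but that still requires an argument: the paper spends Claims 3 and 4 on it, replacing the CLT by the concentration inequality of \cite[Theorem A]{kloeckner2019} for $q_*\geq \theta\log\log N/\log N$, and for even smaller layer probabilities passing to a partially observed chain after showing the walk rarely uses those layers. Your proposal has no substitute for this.

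The second gap is the borderline regime you yourself flag. The paper resolves it (Claim 1) by locating the crossing time of the two Gaussian profiles: since $t_q>t_\bigstar$ always, a competitor can only win via an inflated window $w_q>w_\bigstar$, and comparing the quadratic entropy deficit $\mu_q\leq\mu_\bigstar-\tfrac13\|\epsilon\|_{2,\pi}^2$ (Lemma \ref{prop_quad_mu}) against the window inflation shows the crossing occurs at $\lambda\leq-\Omega(1/\theta)$, where both profiles are within $e^{-\Omega(\theta^{-2})}$ of $1$. Carrying this out requires a genuine \emph{Lipschitz} estimate $|\sigma_q^2-\sigma_p^2|=O(\|\epsilon\|_\infty)$, which is Proposition \ref{prop_sigma_approx} (a nontrivial coupling of the two layer chains); Lemma \ref{lemma_dynamical_variance_bounds}, which you cite for ``smoothness of $\sigma_p$'', only gives a two-sided comparison with the stationary variance up to factors of $\Delta^2$ and cannot control the difference $w_q-w_p$ to the required precision. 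Your net-plus-coupling interpolation $|\mathcal{D}_p(t)-\mathcal{D}_{p'}(t)|\leq t\cdot O(\|p-p'\|)$ is a legitimate and arguably simpler alternative to the paper's grid-of-representatives for the uniformity step alone, but it does not touch either of the two gaps above.
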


To prove the above result we follow an approach similar to \cite{bordenave18}, but where we write
\[
p_*=\min_{i \in [I]}p_i, \qquad
p^*=\max_{i \in [I]}p_i, 
\]
and look for some $A\geq 1$ at the space
\[
\Pab:=\left\{
p \in \mathscr{P}: \,
p_* \geq A \log^{-1/3} N
\right\}
\]
to show the following uniform (over $\Pab$) version of their cutoff result for the simple random walk. Note also that the error probability has no graph parameters so this result is also uniform over all valid configuration models that we could have chosen.

\begin{theorem}\label{thm_cleaner_simultaneous_cutoff}
Under Assumptions \ref{assumptions}, let $A=A(N)\rightarrow \infty$ sufficiently slowly. 
Then for the shape parameters of \eqref{eq_mu_def} and \eqref{eq_sigma_def} we find
\[
\sup_{p \in \Pab}
\sup_{\lambda \in \mathbb{R}}
\left|\mathcal{D}(t_p+\lambda w_p)-1+\Phi(\lambda)\right|
=O
\left(
\frac{1}{A^{\nicefrac{3}{2}}}
\right)
\]
on an event of probability $1-7e^{-\sqrt[3]{\log N}}$.
\end{theorem}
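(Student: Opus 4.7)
The plan is to adapt the matchings-as-you-go construction of \cite{bordenave18} for the simple random walk on a configuration model, and then to pass to uniformity over $\Pab$ by a net argument. First I would couple the graph and the walk: at each step the walker picks a layer $i$ with probability $p_i$; if the chosen out-half-edge has not yet been matched, its partner is revealed only at that moment by sampling uniformly among unmatched type-$i$ in-half-edges. This preserves the joint law of the multiplex configuration model and the walk while reducing the analysis to the layer sequence $(X_k)_k$ together with a sequence of collision events.

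Next I would establish a tree-excursion estimate: since $\max\alpha\leq\Delta$, a fresh half-edge has been revisited before time $t$ with probability $O(t/N)$, so for $t$ in a window of width $O(w_p\sqrt{\log N})$ around $t_p$ the event $\cT_t$ that the trajectory has had no collision holds with high probability. On $\cT_t$ the one-step transition probability along the realised path is exactly $p_{X_k}$, so the probability of the path is $\exp(-S_t)$ with
\[
S_t=\sum_{k=1}^t \log(1/p_{X_k}).
\]
Standard tree-geometry arguments in the spirit of \cite{bordenave18} then give $\mathcal{D}(t)=\p(S_t<\log N)+o(1)$, where the $o(1)$ accounts for $\cT_t^c$ together with the residual non-uniformity of $P^t(v,\cdot)$ on its support; the bound is independent of the starting vertex $v$ once the initial $O(1)$ burn-in of the layer chain has passed.

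For the distributional step I would invoke a Markov-chain Berry--Esseen estimate (Nagaev/Mann type) applied to the bounded additive functional $\log(1/p_{X_k})$ of the irreducible finite-state chain $L$ from \eqref{eq_layer_chain}. Under Assumption~\ref{ass_different_outdegree} the chain has spectral gap and stationary variance $\sigma_p^2$ bounded below uniformly on $\Pab$, while the restriction $p\in\Pab$ caps the per-step increments by $\tfrac{1}{3}\log\log N-\log A$. Taking $t\asymp t_p$ and combining the third-moment control with the CLT window $w_p$ should yield
\[
\sup_{\lambda}\bigl|\p(S_t\leq t\mu_p+\lambda\sigma_p\sqrt{t})-\Phi(\lambda)\bigr|=O(A^{-3/2}),
\]
which after the substitution $t=t_p+\lambda w_p$ delivers the Gaussian cutoff profile for each individual $p\in\Pab$.

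The main obstacle is then the supremum over $p\in\Pab$, since both the random graph realisation and the walk depend on $p$. I would take a net $\mathcal{N}\subset\Pab$ of polynomial cardinality and mesh $N^{-c}$, apply the single-$p$ estimates with a union bound over $\mathcal{N}$, and extend to all of $\Pab$ by coupling the walks under $p$ and under the nearest net point so that their trajectories coincide outside an event of probability $\leq N^{-c'}$. The uniform spectral-gap lower bound and the Lipschitz dependence of $\mu_p,\sigma_p$ on $p$ keep the Berry--Esseen constants and the window $w_p$ comparable across the net, so the inflation from the net step is harmless. Putting the pieces together produces the claimed $O(A^{-3/2})$ bound on an event of probability $1-7e^{-\sqrt[3]{\log N}}$, the constant $7$ counting the distinct bad events (tree failure, burn-in, Berry--Esseen tail, spectral-gap estimate, net deviation, and the auxiliary concentration bounds for $\mu_p$ and $\sigma_p$) that are union-bounded together.
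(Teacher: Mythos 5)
Your overall strategy is the one the paper (and \cite{bordenave18}) uses -- reduce $\mathcal{D}(t)$ to the tail of the additive functional $S_t=\sum_k\log(1/p_{X_k})$ of the layer chain, apply a Markov Berry--Esseen bound, and discretise $\Pab$ -- but there is a genuine gap at the step you compress into ``standard tree-geometry arguments then give $\mathcal{D}(t)=\p(S_t<\log N)+o(1)$.'' The lower bound on $\mathcal{D}(t)$ does follow from path-weight considerations, but the matching \emph{upper} bound is not a single-trajectory statement: one must show, quenched in the graph and uniformly over source--target pairs, that the total weight of ``nice'' paths satisfies $N[P_0^t]_{x\to y}\le 1+o(1)$. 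In the multilayer model this is the technical core of the proof: the paper builds a forward tree $T_x$ with a weight-threshold stopping rule, then runs an extra type-balancing exploration so that the unexplored out-edge weight is distributed across the $I$ layers approximately as $\pi$ (Definition \ref{def_T_construction}, Propositions \ref{prop_cycle_weight} and \ref{prop_good_alg}), intersects with a backward ball from $y$, and applies a concentration inequality for the per-layer uniform matchings (\cite[Proposition 1.1]{chatterjee07}) in Proposition \ref{prop_summing_nice_paths}. None of this is ``standard'' here precisely because the matching is performed independently in each layer, so the residual mass must be balanced layer by layer; your proposal does not engage with it. Relatedly, every $o(1)$ in your outline must in fact be $O(A^{-3/2})$ (e.g.\ the shift from threshold $1/N$ to $1/(N\log^2N)$ costs $O(\log\log N/(\sigma_p\sqrt t))$, which has to be checked against $A^{-3/2}$), since the theorem asserts a rate, not mere convergence.

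A second, factual error: on $\Pab$ the stationary variance of $\log p_i$ is \emph{not} bounded below by a constant; Lemma \ref{prop_min_general_variance} only gives $\mathbb{V}{\rm ar}_\pi\log p_i\ge A\log^{-1/3}N\to0$, and the $O(A^{-3/2})$ rate in the statement is exactly the degeneracy caused by $\sigma_p^2,p_*\downarrow 0$ inside $\Pab$. Your Berry--Esseen step therefore cannot assume uniformly nondegenerate variance; you must track how the constant depends on $\sigma_p$ and $p_*$ (the paper gets $(10\Delta)^5/\sqrt{p_*^3t}$ from \cite[Theorem C]{kloeckner2019} with the weighted norm \eqref{eq_norm_def}, which is where $A^{-3/2}$ comes from). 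Your net argument differs from the paper's (a polynomial-size mesh $N^{-c}$ with trajectory coupling, versus the paper's $\log^{3I}N$-cell grid with relative error $\tfrac1A\log^{-8/3}N$); this variant is workable only if every per-point failure probability beats $N^{cI}$ -- most do, being $e^{-\Theta(\log^{3/2}N)}$ or smaller -- and you still need quantitative continuity of the \emph{dynamical} variance $\sigma_p^2$ in $p$ (an infinite sum of covariances), which is not a one-line Lipschitz fact and occupies Proposition \ref{prop_sigma_approx} in the paper.
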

This is proved as Theorem \ref{thm_simultaneous_cutoff} which is a stronger variant but harder to parse; Theorem \ref{thm_simultaneous_cutoff} also demonstrates that any $A=O(\log^{\nicefrac{1}{5}} N)$ is sufficiently slow for the given order. Outside of $\Pab$, we have the following result (proved as Claims 3 and 4 of Theorem \ref{thm_optimiser_cutoff}) which covers the rest of the space of possible walks, again evidently with uniform control over the space of configuration models.

\begin{proposition}\label{prop_bad_set_control}
For any sequences $t \sim t_\bigstar$ and $A\rightarrow \infty$ sufficiently slowly, and constant $\delta>0$,
\[
\sup_{p \notin \Pab}
\mathcal{D}(t)
\geq
1-\delta
\]
for $N$ large enough, with probability $1-7e^{-\sqrt[3]{\log N}}$.
\end{proposition}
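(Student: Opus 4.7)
The plan is to extract a deterministic gap between $\mu_p$ for $p\notin\Pab$ and the maximum $\mu_\bigstar$, then convert this into a lower bound on $\mathcal{D}(t)$ via a classical path-probability concentration. Specifically, I first establish the entropy gap: there is a constant $\eta>0$, depending only on the type distribution, such that $\mu_\bigstar-\mu_p\geq\eta$ for every $p\notin\Pab$ and $N$ large. Noting that $\mu_p=\sum_\alpha \tfrac{k(\alpha)}{N}\sum_i\alpha_i^+ p_i\log(1/p_i)$ is strictly concave on the convex doubly-stochastic polytope (each summand $p_i\log(1/p_i)$ is strictly concave in $p_i>0$), Assumption~\ref{assumptions} yields a unique maximiser $p_\bigstar$ with all components bounded below by some $c>0$. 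Any sequence $p^{(n)}$ with $p^{(n)}_*\to 0$ admits a subsequential limit $p^\infty$ with $p^\infty_*=0\neq p_\bigstar$, so $\mu_{p^\infty}<\mu_\bigstar$ by uniqueness, and continuity on the compact domain promotes this to a uniform gap. Translated into timescales this reads $t_p/t_\bigstar=\mu_\bigstar/\mu_p\geq 1+\eta'$ for some $\eta'>0$, hence any $t\sim t_\bigstar$ satisfies $t\leq(1-\eta'/2)t_p$ eventually.

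Next, I lower-bound $\mathcal{D}(t)$ at such $t$ via path-probability concentration. Writing $\hat Y_t=-\sum_{k=1}^t\log p_{i_k}$ for the logged reciprocal path probability of the walker's own trajectory $(X_k)$, where $i_k$ is the layer of the $k$-th edge, we have $P^t(v,X_t)\geq e^{-\hat Y_t}$. Setting $A=\{w:P^t(v,w)>N^{-1+\eta'/4}\}$, a total-mass bound forces $|A|\leq N^{1-\eta'/4}$ and so $\pi(A)=o(1)$. Meanwhile $\p(X_t\in A)\geq\p(\hat Y_t<(1-\eta'/4)\log N)$; ergodicity of the layer chain $L$ of \eqref{eq_layer_chain} with stationary distribution $\pi$ gives $\mathbb{E}[\hat Y_t]\sim t\mu_p\leq(1-\eta'/2)\log N$, and a Bernstein-type bound controls the fluctuations at scale $o(\log N)$. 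The upshot is $\mathcal{D}_v(t)\geq\p(X_t\in A)-\pi(A)\geq 1-\delta$ for any starting vertex $v$ once $N$ is large.

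The main obstacle is making this concentration uniform over $p\notin\Pab$: the summands $-\log p_{i_k}$ grow unboundedly as $p_*\to 0$, so a naive Chebyshev bound loses control. The saving feature is that the stationary frequency of visiting a small-$p_i$ layer decays linearly in $p_i$, so the variance contribution of such steps is $O(p_i\log^2 p_i)\to 0$; leveraging this uniformly will likely require either a truncation separating typical from atypical layer traversals, or a discretisation of $\sP\setminus\Pab$ coupled with union bounds. All remaining ingredients are either deterministic or piggyback on the high-probability graph event driving Theorem~\ref{thm_cleaner_simultaneous_cutoff}, which is how the proposition inherits the same $1-7e^{-\sqrt[3]{\log N}}$ error probability.
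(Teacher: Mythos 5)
Your skeleton --- a uniform entropy gap $\mu_\bigstar-\mu_p\geq\eta$ off $\Pab$, concentration of the walker's log path weight near $t\mu_p$, and the standard lower bound $\mathcal{D}_v(t)\geq \p_v(X_t\in A)-\pi(A)$ with $A$ the set of high-probability endpoints --- is the same one the paper uses (its Claims 3 and 4 inside the proof of Theorem \ref{thm_optimiser_cutoff}), and your compactness/strict-concavity route to the entropy gap is a legitimate alternative to the paper's quantitative Taylor bound at the optimiser (Lemma \ref{prop_quad_mu}), provided you make it uniform in the type frequencies $k(\alpha)/N$, which vary with $N$. The genuine gap is precisely the step you defer in your final paragraph, and it is not a uniformity technicality but the core of the proposition: $\sP\setminus\Pab$ contains vectors with some $p_i$ arbitrarily small, even $p_i=0$, and none of the tools you name survive there as stated. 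A Bernstein bound for $\hat Y_t$ needs $\max_i\log(1/p_i)$ under control, but this can exceed $\log N$, making the bound vacuous at deviation scale $\eta'\log N$ over $t=\Theta(\log N)$ steps. Worse, and independently of concentration, your appeal to ``ergodicity of the layer chain'' requires transferring from the walk on the graph to the idealised chain; even for a nice starting vertex this goes through the $\ell$-step smoothing of Proposition \ref{prop_Qbar_and_q} and Lemma \ref{lemma_rerooting}, whose multiplicative weight correction $p_*^{\pm 2\ell}$ shifts $\log\varphi$ by $\Theta(\ell\log(1/p_*))$ --- far larger than $\log N$ once $p_*$ is polynomially small in $N$. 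And the observation that the stationary frequency of a layer is proportional to $p_i$ says nothing when $p_i=0$.

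The paper resolves all of this with an explicit three-way truncation of the layer set (Claim 4): layers with $q_j\leq\theta/\log N$ are never visited in $t\sim t_\bigstar$ steps except with probability $O(\theta)$ and are also discarded from the rerooting; layers with $q_j\in[\theta/\log N,\,\theta\log\log N/\log N]$ are visited only $O(\theta^{-1}\log\log N)$ times by a Chernoff bound, contributing $O(\theta^{-1}\log^2\log N)=o(\log N)$ to $\hat Y_t$; and the surviving layers form a partially observed chain with $\min_i q_i\geq\theta\log\log N/\log N$, to which the exponential concentration of Theorem \ref{thm_concentration} applies with $\|f\|^2=O(1/q_*)$ now finite and $tq_*\to\infty$. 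That decomposition, together with the modified rerooting, is the substance of the proof; your proposal points in the right direction but omits it.
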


From this, together with Theorem \ref{thm_cleaner_simultaneous_cutoff}, we could write a version of Theorem \ref{thm_optimiser_cutoff} which is also uniform in the configuration model parameters.

\begin{example}\label{example}

In the two-layer configuration model defined by
\[
\alpha^+=\alpha^-=(2,3,1)
\]
\[
\beta^+=\beta^-=(2,1,2)
\]
\[
k(\alpha)=k(\beta)=\frac{N}{2}
\]
we have a $1$-dimensional space of doubly stochastic dynamics
\[
\sP=\left\{
\lambda
\left(
\frac{1}{2} ,
0,
0
\right)
+(1- \lambda)
\left(
0,
\frac{1}{5},
\frac{2}{5}
\right)
: \lambda \in [0,1]
\right\}
\]
and the optimiser $p_\bigstar$ in this space is at the parameter 
$
\lambda_\bigstar=\frac{1}{1+\nicefrac{5}{2^{\nicefrac{8}{5}}}}
$
 which gives:
\[
\mu_\bigstar=
\log \left(2+\frac{5}{2^{\nicefrac{3}{5}}}\right)\approx 1.66747;
\qquad
\sigma^2_\bigstar
=
\frac{6}{5}2^{\nicefrac{2}{5}}\log^2 2
 \approx 0.760754.
\]

\begin{figure}[h]
\centering
\includegraphics[scale=0.8]{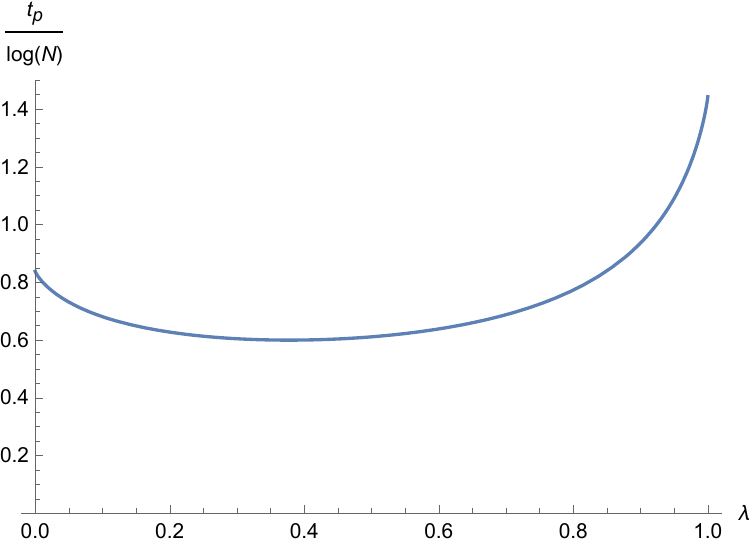}
\caption{
The asymptotic constant multiple of $\log N$ of the cutoff times of chains in Example \ref{example}.
}
\end{figure}

Then by Theorem \ref{thm_optimiser_cutoff}
\[
\sup_{t >0}
\left|
\inf_{p \in \sP}
\mathcal{D}(t)-1+\Phi\left(
\frac{t \mu_\bigstar -\log N}{\sigma_\bigstar}
\sqrt{\frac{\mu_\bigstar}{\log N}}
\right)
\right|\rightarrow 0
\]
or with approximate constants
\[
\inf_{p \in \sP}
\mathcal{D}(t)
\approx
1-\Phi\left(
2.4687 \frac{t}{\sqrt{\log N}} - 1.4805 \sqrt{\log N}
\right).
\]
\end{example}

\FloatBarrier
\subsubsection*{Organisation of Paper}

First in Section \ref{sec_approximation} we collect preliminary results connecting the walker on the network to the approximating layer chain on the type space $[I]$, as well as for approximating one layer chain by another with similar layer probabilities. In Section \ref{sec_cutoff} we put these results together to get a Gaussian cutoff profile for a range of layer chains defined by a vector $p$ of edge probabilities, concluding with the main result Theorem \ref{thm_cleaner_simultaneous_cutoff} which gives this cutoff shape simultaneously for all these $p$, with high probability on the same random graph.

Finally, in Section \ref{sec_optimisation} we let $p=p_\bigstar$ be the optimiser and compare its shape to every other $q$ in the space, both the non-degenerate $q$ of Theorem \ref{thm_cleaner_simultaneous_cutoff} and the rest, for our main result Theorem \ref{thm_optimiser_cutoff} on the cutoff shape of the minimal distance.


\section{Approximation}\label{sec_approximation}

For the set of permissible $p$ we write
\begin{equation}\label{eq_p_def}
\mathscr{P}:=\left\{
p \in [0,1]^I: \,
\forall \al \in \D,  \,
\sum_{i \in [I]} p_i \al_i^+=\sum_{i \in [I]} p_i \al_i^-=1
\right\}
\end{equation}
and for technical reasons we will often further restrict to
\[
\Pab:=\left\{
p \in \mathscr{P}: \,
p_* \geq A \log^{-1/3} N
\right\}
\]
for $A$ sufficiently large. With this notation, we will occasionally write stronger restrictions in the range $1\leq A\leq \theta \log^{1/3}N$ for some small constant $\theta$.

Divide the hypercube $(0,1]^I$ into a partition $G$ of $\log^{3I} N$ smaller hypercubes, each of edge length $\log^{-3} N$. Then give each of these ``grid elements'' $g \in G$ an arbitrary representative vector $p_g \in g \cap \Pab$ if that intersection is nonempty, to define a kind of quantising function ${ r} : \Pab \rightarrow \Pab$ with
\[
p \in g \implies
{ r}(p)=p_g.
\]

This allows the uncountable set of parameters to be controlled through a finite set of representatives, which we will see well approximate all relevant quantities of the walk of interest.

\begin{remark}\label{remark_rel_error}
For $p \in \Pab$ and $r$ defined above, we find relative error $\| 1-{ r}(p)/p  \|_\infty \leq \tfrac{1}{A}\log^{-\nicefrac{8}{3}}N$.
\end{remark}

The Dirichlet form of a chain $Q$ on $[n]$ with stationary measure $\pi$ is given by
\[
\mathcal{E}_Q(f,g):=\frac{1}{2}\sum_{i=1}^n\sum_{i=1}^n \pi(i)Q_{ij} (f(i)-f(j))(g(i)-g(j))
\]
which can, by \cite[Theorem 3.25]{aldous-fill-2014}, be used to give the following definition of the relaxation time.

\begin{definition}[Relaxation Time]\label{def_relaxation}
\[
t_{\rm rel}:=\sup_{x \in \mathbb{R}^n} \frac{\operatorname{Var}_\pi(f)}{\mathcal{E}_P(f,f)}.
\]
\end{definition}

This agrees with the usual definition (the inverse of the spectral gap of $P$) if $P$ is reversible, and when $P$ is not reversible it is instead the inverse of the spectral gap of the additive reversibilisation $\frac{P+P^*}{2}$.

We need the following contraction property to apply the results of \cite{kloeckner2019}.

\begin{corollary}\label{cor_contraction}
The dynamic \eqref{eq_layer_chain} with stationary layer distribution $\pi$ is a contraction in the norm
\begin{equation}\label{eq_norm_def}
\|f\|:= \|f\|_\infty + c \sqrt{\operatorname{Var}_\pi f}
\end{equation}
with  $c  =\nicefrac{1}{\sqrt{p_*}}$, 
in the sense that every $f \in \mathbb{R}^I$ with $\pi(f)=0$ has
\[
\frac{\|Lf\|}{\|f\|} \leq 1-\frac{1}{4\Delta}.
\]
\end{corollary}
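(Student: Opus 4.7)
My plan is to prove the contraction by a Doeblin-type minorisation of $L$ with respect to its own stationary distribution $\pi$. The bounded-degree assumption $\alpha\in[\Delta]^{2I}$ already yields $L_{ij}\geq\pi(j)/\Delta$ for every $i,j\in[I]$: from \eqref{eq_layer_chain}, $\alpha_j^+\geq 1$ makes the ratio factor at least $1$, so $L_{ij}\geq p_j$, while $\pi(j)=p_j N^{-1}\sum_\alpha k(\alpha)\alpha_j^+\leq p_j\Delta$ since $\alpha_j^+\leq\Delta$. Dividing yields the minorisation.

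From this I would form the decomposition
\[
L=\tfrac{1}{\Delta}\mathbf{1}\pi^{T}+\left(1-\tfrac{1}{\Delta}\right)\tilde L,
\qquad
\tilde L:=\frac{L-\tfrac{1}{\Delta}\mathbf{1}\pi^{T}}{1-\tfrac{1}{\Delta}}.
\]
The matrix $\tilde L$ has nonnegative entries (by the minorisation) and rows summing to $1$, and $\pi$ remains stationary for $\tilde L$ (inherited from $\pi L=\pi$ via the decomposition), so it is itself a $\pi$-stationary stochastic kernel. For any $f\in\mathbb{R}^I$ with $\pi(f)=0$ the rank-one piece kills $f$, leaving $Lf=(1-\tfrac{1}{\Delta})\tilde L f$. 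Stochasticity of $\tilde L$ gives $\|\tilde L f\|_\infty\leq\|f\|_\infty$, and the standard Jensen computation using $\pi$-stationarity gives $\operatorname{Var}_\pi(\tilde L f)\leq\operatorname{Var}_\pi f$. Adding these two bounds after scaling by $1$ and $c$ respectively, and multiplying by the outer factor $1-1/\Delta$, yields
\[
\|Lf\|\leq\left(1-\tfrac{1}{\Delta}\right)\|f\|,
\]
which is strictly stronger than the claimed $1-1/(4\Delta)$.

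There is no real obstacle here: the entire content is the Doeblin bound, which is immediate from the degree constraint $1\leq\alpha_j^+\leq\Delta$. I would flag that this argument never uses the specific value $c=1/\sqrt{p_*}$, so any positive $c$ would do; the concrete choice of $c$ presumably matters only downstream, for the application of the Kloeckner~\cite{kloeckner2019} framework, rather than for this contraction statement itself.
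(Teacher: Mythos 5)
Your proof is correct, and it takes a genuinely different route from the paper. The paper proves the variance contraction $\operatorname{Var}_\pi(Lf)\leq(1-\nicefrac{1}{\Delta})\operatorname{Var}_\pi(f)$ via Mihail's identity and a separation bound on the relaxation time of $L^*L$, and then---because the sup-norm component only satisfies $\|Lf\|_\infty\leq\|f\|_\infty$ with no gain---it must trade the two components against each other using $\pi_*\|f\|_\infty^2\leq\operatorname{Var}_\pi(f)$ together with $\pi_*\geq p_*$; this balancing is exactly where the specific choice $c=\nicefrac{1}{\sqrt{p_*}}$ enters and why the paper only obtains the weaker rate $1-\nicefrac{1}{4\Delta}$. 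Your Doeblin decomposition starts from the same key estimate (the minorisation $L_{ij}\geq\nicefrac{\pi(j)}{\Delta}$ is precisely the paper's \eqref{eq_separation}, and both your derivation via $L_{ij}\geq p_j$, $\pi(j)\leq\Delta p_j$ and the paper's are valid), but by splitting off the rank-one part $\tfrac{1}{\Delta}\mathbf{1}\pi^{T}$, which annihilates mean-zero $f$, you contract both norm components simultaneously by the same factor and obtain the stronger rate $1-\nicefrac{1}{\Delta}$ for arbitrary $c>0$. Your closing remark is accurate: the value of $c$ is genuinely irrelevant to the contraction under your argument (though essential to the paper's), and it is indeed only needed downstream, in Proposition \ref{prop_clt_application}, where $c$ controls the ratio $\|f-\mu\|/\sigma$ fed into Theorem \ref{thm_clt}. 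The one checkpoint worth making explicit is that $\tilde L$ preserves $\pi$ and hence $\pi(\tilde Lf)=0$, so that $\operatorname{Var}_\pi(\tilde Lf)$ really is the weighted square norm to which Jensen applies; you assert this and it holds.
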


\begin{proof}
We refer to Mihail's identity \cite[Lemma 1.13]{montenegro2005} and the expression of Definition \ref{def_relaxation} to show
\[
\operatorname{Var}_\pi(Lf)=\operatorname{Var}_\pi(f)-\mathcal{E}_{L^*L}(f,f)
\leq\left(
1-\frac{1}{t_{\rm rel}}
\right)\operatorname{Var}_\pi(f)
\]

We then bound the relaxation time by separation
\[
t_{\rm rel}=\max_{f \in \mathbb{R}^I}
\frac{\operatorname{Var}_\pi(f)}{\mathcal{E}_{L^*L}(f,f)}
=\max_{f \in \mathbb{R}^I}
\frac{\sum_i \sum_j \pi(i) \pi(j) (f_i-f_j)^2}{\sum_i \sum_j \pi(i) [L^*L]_{ij} (f_i-f_j)^2}
\leq
\max_{i,j \in [I]}
\frac{\pi(j)}{[L^*L]_{ij}}
\leq
\max_{i,j \in [I]}
\frac{\pi(j)}{L_{ij}}
\]
and the separation for $L$ is controlled, because $\alpha_i^- \geq 1$ for every $\al \in \D$ and $i \in [I]$, by
\begin{equation}\label{eq_separation}
L_{ij}=
\frac{\sum_{\al \in \D} k(\al) \al_i^- \al_j^+ p_j}{\sum_{\al \in \D} k(\al) \al_i^-}
\geq
\frac{\sum_{\al \in \D} k(\al) \al_j^+ p_j}{\sum_{\al \in \D} k(\al) \al_i^-}
=
\frac{\pi(j)}{\sum_{\al \in \D} \frac{k(\al)}{N} \al_i^-}
\geq
\frac{\pi(j)}{\Delta}.
\end{equation}


If the mean $\pi(f)=0$, the variance is just the weighted 2-norm
\[
\operatorname{Var}_\pi(f)=\|f\|_{2,\pi}
\]
and so, using first that a stochastic matrix has $\|L\|_\infty=1$ and then the trivial observation $\pi_* \|f\|_\infty^2 \leq \|f\|_{2,\pi}^2$,
\[
\frac{\|Lf\|}{\|f\|} \leq \frac{\|f\|_\infty + c \|f\|_{2,\pi} \sqrt{1-\nicefrac{1}{\Delta}} }{\|f\|_\infty + c \|f\|_{2,\pi}}
\leq \frac{\nicefrac{1}{\sqrt{\pi_*}} + c\left( 1-\nicefrac{1}{2\Delta} \right)}{\nicefrac{1}{\sqrt{\pi_*}} + c}.
\]

Because for every $\al \in \D$ and $i \in [I]$ we have $\alpha_i^+\geq 1$,
\begin{equation}\label{eq_pi_lower}
\pi_*= \min_i \sum_\al \frac{k(\al)}{N} \al_i^+ p_i \geq p_*
\end{equation}
and so we can conclude with $c  =\nicefrac{1}{\sqrt{p_*}}$ 
\[
\frac{\|Lf\|}{\|f\|} \leq \frac{\nicefrac{1}{\sqrt{p_*}} + c\left( 1-\nicefrac{1}{2\Delta} \right)}{\nicefrac{1}{\sqrt{p_*}} + c}
=
1-\frac{1}{4\Delta}.
\]
\end{proof}

\begin{definition}\label{def_dynamical_variance}
A Markov chain $(X_k)_k$ on $[n]$ with $n \times n$ transition matrix $P$ and an associated function $f$ on $[n]$ has dynamical variance defined by the limit
\[
\frac{1}{t}\mathbb{V}{\rm ar}\left( \sum_{k=1}^t f\left(X_k\right) \right) 
\stackrel{\mathbb{P}}{\rightarrow}
\sigma^2_P(f),
\]
\end{definition}

This dynamical variance defines the width of the Gaussian cutoff window, and in the following result we control the change in this width.

\begin{proposition}\label{prop_sigma_approx}
Recall the dynamic \eqref{eq_layer_chain}. We find for any $q \in \sP$ when the vector of relative errors has
\[
|1-q_i/p_i| =: \epsilon_i \leq \tfrac{1}{2I} 
\]
that
\[
\left| \sigma^2_{L_q}(\log q)-\sigma^2_{L_p}(\log p) \right|
\leq
11 I \Delta^3 \log^2 (I \vee 8) \|\epsilon\|_{\infty}.
\]
\end{proposition}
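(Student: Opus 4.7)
My plan is to use the Green--Kubo representation of the dynamical variance, whose convergence is guaranteed by Corollary~\ref{cor_contraction}: for $\bar f = f - \pi(f)$,
\[
\sigma^2_L(f) = \operatorname{Var}_\pi(f) + 2 \bigl\langle \bar f,\, (I-L)^{-1} L \bar f \bigr\rangle_\pi,
\]
and then to morph the $(L_p, \log p, \pi_p)$ data into the $(L_q, \log q, \pi_q)$ data one ingredient at a time via the triangle inequality.

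Three elementary perturbations drive the estimate, each of size $O(\|\epsilon\|_\infty)$. Since $q_i = p_i(1-\epsilon_i)$ with $|\epsilon_i| \leq 1/(2I)$, we have $|\log q_i - \log p_i| \leq 2|\epsilon_i|$. The explicit formula $\pi_p(j) = \sum_\al k(\al)\al_j^+ p_j/N$ gives $\pi_q(j) = (1-\epsilon_j)\pi_p(j)$ exactly. The transition matrix \eqref{eq_layer_chain} is linear in $p_j$ in its numerator, so $L_q(i,j) = (1-\epsilon_j) L_p(i,j)$ and $\|L_q - L_p\|_\infty \leq \|\epsilon\|_\infty$ as a row-sum norm. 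Changes in the resolvent are handled by the standard identity
\[
(I-L_q)^{-1} - (I-L_p)^{-1} = (I-L_q)^{-1}(L_q - L_p)(I-L_p)^{-1},
\]
in which each resolvent has norm at most $4\Delta$ in the Kloeckner norm by summing the geometric series from Corollary~\ref{cor_contraction}.

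The main obstacle is that $\|\log p\|_\infty$ is only bounded by $\log(1/p_*)$, whereas the target carries no $p_*$ dependence. This is exactly what the Kloeckner norm $\|f\| = \|f\|_\infty + p_*^{-1/2}\sqrt{\operatorname{Var}_{\pi_p} f}$ is designed to absorb: whenever $\|\log p\|_\infty$ appears in an intermediate step it is paired with a $\sqrt{p_*}$ coming from the $1/c$ when converting back to $L^2(\pi_p)$, and the universal bound $\sqrt{x}\log(1/x) \leq 2/e$ on $(0,1]$ controls this product. In parallel, $\pi_p$-weighted moments of $\log p$ remain $p_*$-free since $\pi_p(i) \leq \Delta p_i$ and $x\log^2 x \leq 4/e^2$ on $(0,1]$. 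The $\log^2(I \vee 8)$ factor surfaces when bounding $\pi_p(\log^2 p)$ by splitting coordinates: those with $p_i \geq 1/(I \vee 8)$ contribute at most $\log^2(I \vee 8)$ each, while the rest are absorbed by the universal $x\log^2 x$ estimate above. Assembling the three perturbations with the two-fold resolvent expansion (contributing $\Delta^2$), the operator norm bound on one further resolvent (another $\Delta$), and the $I \log^2(I \vee 8)$ coming from function norms yields the stated $11 I \Delta^3 \log^2(I \vee 8)\|\epsilon\|_\infty$ through a routine tally of constants; the essential point is to never lose $p_*$-independence along the way.
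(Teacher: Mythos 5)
Your route --- the Green--Kubo representation plus a resolvent perturbation --- is genuinely different from the paper's. The paper splits the change into two steps, $\sigma^2_{L_p}(\log p)\to\sigma^2_{L_p}(\log q)$ (handled by the triangle inequality for $\sqrt{\sigma^2}$ as a seminorm in $f$, together with Lemma \ref{lemma_dynamical_variance_bounds}) and $\sigma^2_{L_p}(\log q)\to\sigma^2_{L_q}(\log q)$, and for the second step it runs the two chains coupled, decoupling each step with probability $\|\epsilon\|_\infty$ (a $\operatorname{Geom}(\|\epsilon\|_\infty)$ coupled stretch) and recoupling in $O(I\Delta^2)$ expected steps via strong stationary times of the product chain; the factor $\|\epsilon\|_\infty$ then appears as $1/\mathbb{E}(\tau_1)$ and the numerator involves only entropy-type moments of $\log^2 q$, which are automatically $p_*$-free. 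Your analytic route is viable in principle, but as written it has a gap at exactly the point you call ``the essential point.''

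The problem is that the Kloeckner operator norm of the perturbation $L_q-L_p$ is \emph{not} $O(\|\epsilon\|_\infty)$ by the naive estimate. Writing $\|g\|=\|g\|_\infty+c\sqrt{\operatorname{Var}_\pi g}$ with $c=p_*^{-1/2}$, the bound $\operatorname{Var}_\pi(u)\le\|u\|_\infty^2$ gives only
\[
\|(L_q-L_p)g\| \le (1+c)\,\|\epsilon\|_\infty\,\|g\|_\infty = O\bigl(p_*^{-1/2}\|\epsilon\|_\infty\bigr)\|g\|,
\]
and the innermost factor $\|\log p-\mu\|$ itself contains a second $c\sqrt{\operatorname{Var}_\pi}$, i.e.\ another $p_*^{-1/2}$. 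The single $\sqrt{p_*}$ you recover when converting the final pairing back to $L^2(\pi_p)$ cancels only one of these, and your inequality $\sqrt{x}\log(1/x)\le 2/e$ addresses only the $\|\log p\|_\infty$ part of the innermost norm; a ``routine tally'' along the lines you describe therefore produces a constant that blows up as $p_*\to 0$, contradicting the $p_*$-free target. The repair is an ingredient you never state: the one-step smoothing bound $L_p(i,j)\le\Delta\,\pi_p(j)$ (dual to \eqref{eq_separation}), which gives $\|(L_q-L_p)g\|_\infty\le\Delta\|\epsilon\|_\infty\|g\|_{1,\pi}\le\Delta\|\epsilon\|_\infty\sqrt{\operatorname{Var}_\pi g}\le\Delta\|\epsilon\|_\infty\sqrt{p_*}\,\|g\|$ on $\pi_p$-mean-zero $g$, restoring a $p_*$-free operator bound. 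Two further points need care: $I-L$ is not invertible on all of $\mathbb{R}^I$ and the mean-zero subspaces of $\pi_p$ and $\pi_q$ differ, so the resolvent identity must be written for the fundamental matrices $(I-L+\mathbf{1}\pi)^{-1}$ with an extra $O(\|\epsilon\|_\infty)$ contribution from $\mathbf{1}(\pi_q-\pi_p)$; and the quadratic form itself changes from $\langle\cdot,\cdot\rangle_{\pi_p}$ to $\langle\cdot,\cdot\rangle_{\pi_q}$. With these repairs your approach should close, but they are not routine bookkeeping, and the numerical constant $11 I\Delta^3\log^2(I\vee 8)$ cannot be asserted without carrying them out.
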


\begin{proof}

For $p,q \in \mathscr{P}$ write $L_p$ for the matrix \eqref{eq_layer_chain} with probabilities $p$, $(X_k)_k$ for a chain with this dynamic and for this proof write
\[
\psi_{p}(q)
:=
\sigma^2_{L_p}(\log q).
\]

We deal with the $p$ and $q$ in this expression one-by-one. First by Lemma \ref{lemma_dynamical_variance_bounds}
\[
\psi_{p}(q) \leq 
\psi_{p}(p) + \psi_{p}(\nicefrac{q}{p}) \leq 
\psi_{p}(p) + \sqrt{2 \Delta\mathbb{V}{\rm ar}_{\pi_p}(\nicefrac{q}{p})} \leq 
\psi_{p}(p) + \sqrt{2 \Delta}\|\epsilon\|_{2,\pi}
\]
and in the other direction we use that $\nicefrac{-\epsilon}{1-\epsilon}<1-p/q<\nicefrac{\epsilon}{1+\epsilon}$ to obtain
\[
\psi_{p}(p) \leq 
\psi_{p}(q) + \sqrt{2 \Delta}\left\| \frac{\epsilon}{1-\epsilon}\right\|_{2,\pi}\leq 
\psi_{p}(q) + \frac{4\sqrt{2 \Delta}}{3}\left\| \epsilon\right\|_{2,\pi}.
\]

So we conclude for the distance in \emph{variance}, by Lemma \ref{prop_max_variance},
\[
\begin{split}
\left|
\psi_{p}(q) - 
\psi_{p}(p)
\right|
&\leq
\frac{4\sqrt{2 \Delta}}{3}
\left\| \epsilon \right\|_{2,\pi}
\left(
\sqrt{\psi_{p}(q)} +
\sqrt{\psi_{p}(p)}
\right)
\leq
\frac{4\sqrt{2 \Delta}}{3}
\left\| \epsilon \right\|_{2,\pi}
\cdot
2\sqrt{2}\Delta \log (I\vee 8)\\
&\leq \tfrac{16}{3} \Delta^{\nicefrac{3}{2}} \|\epsilon\|_{1,\pi} 
\|\epsilon\|_{\infty}  \log (I\vee 8)
\leq \tfrac{4}{3} \Delta^{\nicefrac{3}{2}}  \|\epsilon\|_{1,\pi}  \log (I\vee 8)
\end{split}
\]

For the other distance $\left| \psi_{p}(q) - 
\psi_{q}(q) \right|$,  we introduce $(Y_k)_k$ with the dynamic $L_q$ and aim to run these chains coupled to each other. 
Suppose that initially both chains are in the same state, this state chosen with distribution $\pi(p)$. 
From this state, we can run just the chain with the stationary dynamic $L_p$ and keep the other with dynamic $L_q$ coupled each step (from $i$ to $j$) with probability $1-\|\epsilon\|_{\infty}$, using that
\[
\frac{L_q(i,j)}{L_p(i,j)}
\geq
\frac{\sum_{\al \in \D} k(\al) \al_i^- \al_j^+ p_j (1- \epsilon_j)}{\sum_{\al \in \D} k(\al) \al_i^- \al_j^+ p_j}
=1- \epsilon_j.
\]

Thus we have a number of coupled steps $\tau_1\sim \operatorname{Geom}\left(\|\epsilon\|_{\infty}\right)$ where $\tau_1$ is independent from the path of the coupled chains with dynamic $L_p$. 

After $\tau_1$, recall by \eqref{eq_separation} that we have one-step separation distance $1-\nicefrac{1}{\Delta}$, for the product chain this becomes $1-\nicefrac{1}{\Delta^2}$, and so by 
\cite[Proposition 3.2(b)]{aldous87} we can couple \emph{the product chain} to its stationary distribution every timestep. Each of these strong stationary times is also a \emph{meeting} time at a site with distribution $\pi(p)$ with probability
\[
\sum \pi(p)^2-\de_{\rm TV}
\left(
\pi(q),\pi(p)
\right)
 \geq \frac{1}{I}-\|\epsilon\|_{1,\pi}
 \geq \frac{1}{I}-\|\epsilon\|_{\infty}
 \geq \frac{1}{2I}.
\]

Thus we expect at most $2I\Delta^2$ steps until the the chains meet and we can couple them to each other. Call this meeting time $\tau_2$, and as it was constructed from strong stationary times it follows by the weak law of large numbers that we can decompose the variance
\[
\begin{split}
&\left| \psi_{p}(q) - 
\psi_{q}(q) \right|
\stackrel{\p}{\sim}
\frac{1}{n}\mathbb{V}{\rm ar}\left( \sum_{k=1}^n \log q_{Y_k} \right) 
- 
\frac{1}{n}\mathbb{V}{\rm ar}\left( \sum_{k=1}^n \log q_{X_k} \right)\\
&\stackrel{\mathbb{P}}{\rightarrow}
\frac{
\mathbb{V}{\rm ar}\left( \sum_{k=\tau_1}^{\tau_2} \log q_{Y_k}\right)
+
\mathbb{C}{\rm ov}\left( \sum_{k=0}^{\tau_1} \log q_{Y_k} ,\sum_{k=\tau_1}^{\tau_2} \log  \frac{q_{Y_k}}{q_{X_k}} \right)
-
\mathbb{V}{\rm ar}\left( \sum_{k=\tau_1}^{\tau_2} \log q_{X_k}\right)
}{\mathbb{E}(\tau_2)}\\
&\leq
\frac{
\mathbb{E}\left( \sum_{k=\tau_1}^{\tau_2} \log^2 q_{Y_k}\right)
+
\left|
\mathbb{C}{\rm ov}\left( \sum_{k=0}^{\tau_1} \log q_{Y_k} ,\sum_{k=\tau_1}^{\tau_2} \log  q_{Y_k} \right)\right|
+
\left|\mathbb{C}{\rm ov}\left( \sum_{k=0}^{\tau_1} \log q_{X_k} ,\sum_{k=\tau_1}^{\tau_2} \log  q_{X_k} \right)\right|
}{\mathbb{E}(\tau_1)}.
\end{split}
\]

For the covariance term of $Y$ we use first the law of total covariance with the strong Markov property at time $\tau_1$, and then Cauchy-Schwarz
\[
\begin{split}
\left|
\mathbb{C}{\rm ov}\left( \sum_{k=0}^{\tau_1} \log q_{Y_k} ,\sum_{k=\tau_1}^{\tau_2} \log  q_{Y_k} \right)
\right|
&=
\left|
\mathbb{C}{\rm ov}\left( 
\mathbb{E}\left( \sum_{k=0}^{\tau_1} \log q_{Y_k} \Bigg| Y_{\tau_1} \right)
,
\mathbb{E}\left( \sum_{k=\tau_1}^{\tau_2} \log  q_{Y_k} \Bigg| Y_{\tau_1} \right)
\right)
\right|
\\
&\leq
\left[
\mathbb{V}{\rm ar} \, \mathbb{E}\left( \sum_{k=0}^{\tau_1} \log q_{Y_k} \Bigg| Y_{\tau_1} \right)
\mathbb{V}{\rm ar} \, \mathbb{E}\left( \sum_{k=\tau_1}^{\tau_2} \log  q_{Y_k} \Bigg| Y_{\tau_1} \right)
\right]^{\nicefrac{1}{2}}.
\end{split}
\]


For {the second variance} on the shorter timescale, 
the row mean of the functional $\log^2 q_X+\log^2 q_Y$ is bounded by
\[
\sum_{i=1}^I \Delta p_i \log^2 q_i
+
\sum_{i=1}^I \Delta q_i \log^2 q_i
\leq
\Delta
\left(
1+\frac{1}{1-\nicefrac{1}{4}}
\right)
\sum_{i=1}^I q_i \log^2 q_i
\leq \frac{7\Delta}{3}
\log^2 (I \vee 8).
\]

We then recall that $\tau_2$ is constructed from strong stationary times for the pair, so as $Y_{\tau_1}$ can only affect the mean up to the first strong stationary time $\tau_1+X$ we can multiply by the geometric second moment
\[
\mathbb{V}{\rm ar} \, \mathbb{E}\left( \sum_{k=\tau_1}^{\tau_2} \log  q_{Y_k} \Bigg| Y_{\tau_1} \right)
=
\mathbb{V}{\rm ar} \, \mathbb{E}\left( \sum_{k=\tau_1}^{\tau_1+X} \log  q_{Y_k} \Bigg| Y_{\tau_1} \right)
\leq
2 \Delta^2 \cdot \frac{7\Delta}{3}
\log^2 (I \vee 8).
\]

\vspace{1em}

{The first variance} is more difficult, as a sum conditional on the terminal value requires us to look at a reversed process. However as the step and decoupling event can be sampled independently, the forwards (quasi-stationary in the sense of \cite[Section 3.6.5]{aldous-fill-2014}) dynamic is $\left(1-\nicefrac{1}{\|\epsilon\|_{\infty}}\right)L_p$ and so the quasi-stationary distribution is simply $\pi(p)$ in which the chains start.

For the time reversal $L^*_{p}$, then, write $T\sim \operatorname{Geom}(\nicefrac{1}{\Delta})$ for the first strong stationary time. This time is of course independent of the process at $T$ but in fact is further independent of the process at time $0$, and so by simulating the chain from $\tau_1$ backwards to $-\infty$
\[
\mathbb{V}{\rm ar} \, \mathbb{E}\left( \sum_{k=0}^{\tau_1} \log q_{Y_k} \Bigg| Y_{\tau_1} \right)
=
\mathbb{V}{\rm ar} \, \mathbb{E}\left( \sum_{k=(\tau_1-T)\vee 0}^{\tau_1} \log q_{Y_k} \Bigg| Y_{\tau_1} \right)
\leq \mathbb{E}\left( \left( \sum_{k=0}^{T} \log q_{Y_k}  \right)^2 \right).
\]

We can calculate $\mathbb{E}(T^2)=\Delta^2+\Delta$, 
%
so altogether we bound 
\[
\begin{split}
\left|
\mathbb{C}{\rm ov}\left( \sum_{k=0}^{\tau_1} \log q_{Y_k} ,\sum_{k=\tau_1}^{\tau_2} \log  q_{Y_k} \right)
\right|
&\leq 
\sqrt{ 
 (\Delta^2+\Delta)\Delta \log^2 ( I \vee 8 )
\cdot
 \frac{14I\Delta^3}{3}
\log^2 (I \vee 8) }\\
&\leq
\Delta^3 \log^2 (I \vee 8)\sqrt{7I}
 \end{split}
\]
and we have the same bound by the same argument for the covariance expression of $X$.


So we find for this second distance
\[
\left| \psi_{p}(q) - 
\psi_{q}(q) \right|
\leq
\frac{
\frac{14 I \Delta^3}{3}
\log^2 (I \vee 8)
+
2\Delta^3 \log^2 (I \vee 8)\sqrt{7I}
}{\mathbb{E}(\tau_1)}
\leq
10 I \Delta^3 \log^2 (I \vee 8) \|\epsilon\|_{\infty}
\]
and then conclude
\[
\left| \psi_{p}(p) - 
\psi_{q}(q) \right|
\leq
\tfrac{4}{3} \Delta^{\nicefrac{3}{2}}  \|\epsilon\|_{1,\pi}  \log (I\vee 8)
+
10 I \Delta^3 \log^2 (I \vee 8) \|\epsilon\|_{\infty}
\]
which gives the result as an upper bound.
\end{proof}

\begin{definition}[$Q_{v,t}$]\label{def_Q}
We define the set of paths of length $t$ between two vertices as you would expect, but note that the path also contains the layer that sourced each edge:
\[
\mathfrak{P}^t_{v_0,v_t}=\left\{
\mathfrak{p}=
\left(
(v_0,i_1),\dots,(v_{t-1},i_{t})
\right): \, 
\forall k \in [t], \quad
A^{(i_{k})}_{v_{k-1} v_{k}}>0
\right\}.
\]

Then the weight of the path (which should be thought of as just the probability to take the path) can be read from the layer labels
\[
\mathbf{w}\big(\left(
(v_0,i_1),\dots, (v_{t-1},i_t)
\right)\big)
=
\prod_{k=1}^{t} p_{i_k}
\]
which gives the network version of the path weight distribution from $v$ in time $t$
\[
Q_{v,t}(\varphi)=
\sum_{w=1}^N
\sum_{\mathfrak{p}\in \mathfrak{P}^t_{v,w}}
\mathbf{w}(\mathfrak{p})
\mathbbm{1}_{\mathbf{w}(\mathfrak{p})>\varphi}.
\]
\end{definition}

For a small constant multiple of the logarithm
\begin{equation}\label{eq_h_def}
h=
\left\lfloor
\frac{\log N}{10 \log \Delta+10 \log I}
\right\rfloor
\end{equation}
we introduce a set $V$ of \emph{nice vertices}
\begin{equation}\label{eq_nice_vertex}
V:=\{
v \in [N] : B^+(v,h) \text{ is a tree}.
\}
\end{equation}

\begin{lemma}\label{lemma_rerooting}
With probability at least $1-N^{\nicefrac{-1}{6}}$ 
we have
\[
\forall p \in \mathscr{P}, 
\forall i \in [N], \, \forall t \in \mathbb{N} \qquad
P^t(i,[N]\setminus V)
\leq
\left(p^*\right)^{t \wedge h}.
\]
\end{lemma}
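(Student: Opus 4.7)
The plan is to reduce the claim to bounding $P^h(i,[N]\setminus V)$ via the Markov property, and then establish the small-time bound by decomposing over ``cycle edges''---edges whose pairings in the configuration model close a cycle during the BFS from a vertex.

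First, I would split on $t$ versus $h$. For $t \geq h$, the Markov decomposition $P^t(i,[N]\setminus V) = \sum_j P^{t-h}(i,j)\, P^h(j,[N]\setminus V)$ combined with stochasticity of $P^{t-h}$ shows it suffices to prove $P^s(i,[N]\setminus V) \leq (p^*)^s$ uniformly for $s \leq h$, since then $P^t(i,[N]\setminus V) \leq (p^*)^h = (p^*)^{t \wedge h}$. For $s \leq h$, if $X_s = w \notin V$ then by definition $B^+(w, h)$ contains at least one cycle edge $e$, and since $w \in B^+(i, s)$ this edge lies in $B^+(i, 2h)$. I would then decompose
\[
P^s(i, [N]\setminus V) \leq \sum_{e \text{ cycle edge of } B^+(i, 2h)} P^s(i, U_e),
\qquad U_e := \{w \in B^+(i,s) : e \in B^+(w,h)\}.
\]

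The high-probability event I would set up is that each $B^+(i, 2h)$ contains at most one cycle edge. A standard configuration-model second-moment calculation bounds the probability that $B^+(i, 2h)$ has two or more cycle edges by $O((I\Delta)^{4h}/N^2)$, which by \eqref{eq_h_def} is $o(N^{-1})$, so a union bound over $i \in [N]$ secures this event with probability at least $1 - N^{-1/6}$. On this event, $B^+(i, 2h)$ minus its sole cycle edge is a directed tree, so the set $U_e$ intersected with vertices at graph distance exactly $s$ from $i$ contains at most one vertex---the unique depth-$s$ ancestor of the source of $e$---and the unique length-$s$ directed path from $i$ to this vertex has weight $\prod_k p_{i_k} \leq (p^*)^s$. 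Since there is a single cycle edge in $B^+(i,2h)$, the sum over $e$ has a single summand, yielding the target bound $(p^*)^s$; the argument is manifestly uniform in $p \in \mathscr{P}$ because the graph event does not depend on $p$ and the path-weight bound uses only $p^*$.

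The main obstacle is controlling length-$s$ paths from $i$ that traverse the cycle edge $e = (u, v)$ itself. When $v$ is an ancestor of $u$ in the tree, $e$ closes a directed cycle, and the walker can loop around it a number of times bounded by $\lfloor s/|\text{cycle}|\rfloor$, producing additional length-$s$ paths to the same target and threatening to inflate $P^s(i,w)$ above $(p^*)^s$. My preferred resolution is to sharpen the graph event: extend the notion of ``forbidden'' configuration to include any directed-cycle-inducing chord in $B^+(i,2h)$, which is ruled out by the same second-moment bound at the cost of only a constant factor; then $e$, if present, is a merely ``undirected'' chord, the walker cannot loop, and each target is reached by at most one length-$s$ directed path, giving exactly the stated bound.
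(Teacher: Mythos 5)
Your overall strategy is the same as the paper's: reduce to $t\le h$ by the Markov property, work on the event that every forward $2h$-ball has at most one surplus (cycle) edge, and then count length-$s$ paths that can end at a non-nice vertex; your probability bookkeeping for that event is fine. The genuine gap is your resolution of the directed-cycle obstacle. Excluding ``any directed-cycle-inducing chord in $B^+(i,2h)$'' is \emph{not} a second-moment event and cannot be had at the cost of a constant factor: such a chord is a \emph{single} surplus edge whose head is merely constrained to lie among the at most $2h$ ancestors of its tail, so per root the probability is of order $(I\Delta)^{2h}\,h\Delta^2/N\approx N^{-\nicefrac{4}{5}}\log N$, and the union bound over all $N$ roots is of order $N^{\nicefrac{1}{5}}\log N$, which diverges. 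Indeed the event you want fails with high probability: the expected number of directed cycles of length at most $2h$ in the configuration model is of order $h$, so such cycles exist with high probability and each lies inside the $2h$-balls of its own vertices. As written, your proof therefore has a hole exactly at the point you flagged.

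The fix is not a stronger graph event but a sharper count: the looping walks you worry about cannot coexist with the non-looping one at the same time $s$. On the event that $B^+(i,2h)$ is a tree plus one chord $e=(u,v)$ with $v$ an ancestor of $u$ (cycle length $L$), a length-$s$ walk from $i$ ending at a vertex whose forward $h$-ball is non-tree must end at an ancestor of $u$, and is of one of two types: a pure tree walk, which must end at the \emph{unique} depth-$s$ ancestor of $u$ and hence has $s\le d(i,u)$; or a walk traversing $e$ exactly $k\ge1$ times, which ends on the cycle and has length $d(i,u)+1+(k-1)L+r$ with $0\le r\le L-1$, so that $s$ determines $k$ and $r$ (hence the walk) uniquely, and requires $s\ge d(i,u)+1$. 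These ranges of $s$ are disjoint, so for each $s$ there is at most one admissible walk, of weight at most $(p^*)^s$ --- which is precisely the content of the paper's terse remark that the walker must pay at most $p^*$ at every timestep to keep heading towards the surplus edge. Replacing your final paragraph with this uniqueness argument closes the gap.
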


\begin{proof}
First we show that, with high probability and for every vertex, the forward ball of radius $2h$ has tree surplus at most $1$.
This is very similar to \cite[Section 3.2]{bordenave18} except that we can only select heads within a particular layer. 

The maximal ball size is $\nicefrac{(\Delta I)^{2h+1}}{(\Delta I-1)}$ and so because each exploration removes one available head, the $k$\textsuperscript{th} edge explored in the ball chooses a previously contacted vertex with probability bounded by
\[
\frac{\Delta k}{N - \nicefrac{(\Delta I)^{2h+1}}{(\Delta I-1)}}\leq 2 \Delta N^{-\nicefrac{4}{5}}.
\]

These exploration events are independently sampled and so we have a binomial.
We can bound this binomial by a Poisson then with mean 
\[
N^{\nicefrac{1}{5}}\cdot 
\frac{2 \Delta N^{-\nicefrac{4}{5}}}{1-2 \Delta N^{-\nicefrac{4}{5}}}
\leq 3 \Delta N^{-\nicefrac{3}{5}}
\]
 which then is at least $2$ with probability 
\[
1-e^{-3 \Delta N^{-\nicefrac{3}{5}}}(1+3 \Delta N^{-\nicefrac{3}{5}})
\leq
1-(1-3 \Delta N^{-\nicefrac{3}{5}})(1+3 \Delta N^{-\nicefrac{3}{5}})
\leq
9 \Delta^2 N^{-\nicefrac{6}{5}}
.
\]

Therefore over $[N]$, we expect fewer than $9 \Delta^2N^{-\nicefrac{1}{5}}$ vertices with two surplus in their forward ball and by Markov's inequality see none with error probability at most $9 \Delta^2N^{-\nicefrac{1}{5}}$. Then, because the maximal probability we step towards the surplus edge is $p^*$ at every timestep,
\[
\forall i \in [N], \, \forall t \in \mathbb{N} \qquad
P^t(i,[N]\setminus V)
\leq
(p^*)^{t \wedge h}.
\]
\end{proof}

We approximate the typical path weight from some type $\al \in \D$ using the layer chain $(M_k)_{k \in \mathbb{N}^+}$, with initial condition
\[
\p(M_1=i)=\al^+_i p_i
\]
and transition matrix
\[
L_{ij}=
\frac{\sum_{\al \in \D} k(\al) \al_i^- \al_j^+ p_j}{\sum_{\al \in \D} k(\al) \al_i^-}
\]
to define
\begin{equation}\label{eq_q_def}
q^{(\alpha)}_t(\varphi)=
\mathbb{P}\left(
\prod_{k=1}^{t} 
p_{M_k}
>\varphi
\right).
\end{equation}

On the network, this approximates an averaged version of the local path weight $Q$ from Definition \ref{def_Q}.

\begin{definition}[$\bar{Q}_{v,t}$]
We use a small time period after which the path weights are guaranteed to at least be small (recall $p^*=\max_{i\in[I]}p_i$)
\[
\ell=
\left\lfloor
\frac{3 \log \log N}{\log \nicefrac{1}{p^*}}
\right\rfloor
\]
to construct the averaged version of ${Q}_{v,t}$
\[
\bar{Q}_{v,t}(\varphi)=
\sum_{w=1}^N
\left[P^{\ell}\right]_{v\rightarrow w}
Q_{w,t}(\varphi).
\]
\end{definition}

This approximation is in the following sense.

\begin{proposition}\label{prop_Qbar_and_q}
For $t \leq \log^{\nicefrac{4}{3}} N$, large $N$, and $\varphi$ also a function of $N$,
\[
q^{(\tau(v))}_t\left(\left(1+\frac{1}{\log N}\right)^2\varphi\right)-
\frac{2}{\sqrt{\log N}}
\leq
\bar{Q}_{v,t}(\varphi)
\leq
q^{(\tau(v))}_t\left(\left(1-\frac{1}{\log N}\right)^2\varphi\right)+
\frac{2}{\sqrt{\log N}}
\]
simultaneously for every $p \in \mathscr{P}$ and $v \in V$, 
with probability at least $1-e^{-\sqrt[3]{\log N}}$.
\end{proposition}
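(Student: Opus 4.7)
The plan is to identify $\bar{Q}_{v,t}(\varphi)$ with the layer-chain tail $q^{(\tau(v))}_t(\varphi)$ using the tree-like structure around nice vertices, and to handle the simultaneous supremum over $p \in \mathscr{P}$ by quantizing and union-bounding.

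First I would apply the quantization $r:\Pab \to \Pab$ from earlier in Section~\ref{sec_approximation}. By Remark~\ref{remark_rel_error}, the relative error $\|1 - r(p)/p\|_\infty$ is $O(A^{-1}\log^{-8/3}N)$, which compounded over $t \leq \log^{4/3} N$ steps multiplies each path weight $\prod p_{i_k}$ by a factor $(1 \pm o(1/\log N))$. This error is exactly what the $(1 \pm 1/\log N)^2$ factors on $\varphi$ in the statement are designed to absorb, so it suffices to establish the estimate for each of the $|G| \leq \log^{3I} N$ grid representatives separately.

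Next, for a fixed grid representative and for $w$ a nice vertex (so that $B^+(w,h)$ is a tree), I would match $Q_{w,t}(\varphi)$ with $q^{(\tau(w))}_t(\varphi)$ directly. In a tree, walks from $w$ of length $\leq h$ correspond uniquely to layer sequences, and the weight reads off as $\prod p_{i_k}$. The configuration model pairing ensures that the vertex reached by a $\al_i^-$-tail has type $\beta$ with probability $k(\beta)\beta_i^-/\sum_\gamma k(\gamma)\gamma_i^-$; integrating the subsequent out-layer choice $\beta_j^+ p_j$ over this recovers exactly $L_{ij}$. So the annealed layer sequence along a walk is distributed as the layer chain $(M_k)$. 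For $t$ exceeding $h$, decompose $[1,t]$ into blocks of length $\leq h$: by Lemma~\ref{lemma_rerooting} each block's starting vertex is nice with probability at least $1-(p^*)^h$, on this event the block contributes a layer chain segment, and the Markov property of $L$ composes these segments multiplicatively. Finally, the averaging $\sum_w [P^\ell]_{v\to w}$ in $\bar Q_{v,t}$ smooths the initial vertex over the $\ell$-step walker distribution from $v$; this matches $q^{(\tau(v))}_t$ up to the tolerance provided that the mixing of the type distribution on the timescale $\ell$ is faster than the $2/\sqrt{\log N}$ slack.

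The principal obstacle is upgrading this annealed identification to a quenched, uniform statement with failure probability below $e^{-\sqrt[3]{\log N}}$. I would run a martingale concentration argument (Azuma or Freedman-type) along an edge-by-edge exploration of the pairings revealing $B^+(w,h)$, applied to the weighted path count $Q_{w,t}(\varphi)$ or to its logarithm. The uniform bounded-degree hypothesis $\min \alpha \geq 1$ and $\max \alpha \leq \Delta$ controls the increment sizes, and the restriction $p_* \geq A\log^{-1/3} N$ from $\Pab$ prevents any single layer probability from being so small that a single exploration step dominates the variance. The resulting exponential concentration at scale $2/\sqrt{\log N}$ then survives the union bound over $N \cdot \log^{3I} N$ pairs $(v,p_g)$, with room left for the stated $1-e^{-\sqrt[3]{\log N}}$ tail.
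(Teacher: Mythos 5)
Your quantization step and your identification of the annealed layer sequence with the chain $L$ both match the paper, but the concentration mechanism you propose in the last paragraph is where the argument breaks down, and it is exactly the step where the paper imports the key idea of \cite{bordenave18}. An Azuma/Freedman bound along an edge-by-edge exposure of the pairings cannot reach the scale $1/\sqrt{\log N}$: the length-$t$ paths contributing to $Q_{w,t}(\varphi)$ pass through the pairings incident to the first few steps out of $w$ with total weight $\Theta(1)$, so swapping a single such pairing changes $Q_{w,t}(\varphi)$ by a constant and the sum of squared martingale increments is already $\Omega(1)$ at depth one. The $\ell$-step averaging in $\bar Q_{v,t}$ only damps each early influence to $(p^*)^{\ell}\le \log^{-3}N$ while multiplying the number of relevant pairings by $(\Delta I)^{\ell}$, and for $p^*$ close to $1$ (which is allowed -- the proposition is stated for all of $\mathscr{P}$, so you cannot invoke the $\Pab$ lower bound on $p_*$ as you do) this count vastly exceeds $\log^{6}N$. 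Hence the quadratic variation is not $o(1/\log N)$, and the martingale inequality yields nothing at the required scale, let alone a tail strong enough to survive the union bound over $N\log^{3I}N$ pairs $(v,p_g)$.

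What the paper does instead, following \cite[Lemma 9]{bordenave18}, is a moment argument. Launch $m=\lfloor\log^2 N\rfloor$ independent walkers from $v$ and let $Z$ be the event that none finds a cycle before time $\ell$ and none traverses a path of weight $\le\varphi$ between times $\ell$ and $\ell+t$. Conditionally on the graph, $\mathbb{P}(Z\mid A)\ge\mathbbm{1}_{v\in V}\,\bar Q_{v,t}(\varphi)^{m}$, so Markov's inequality gives $\mathbb{P}(Z)\ge (q^{(\tau(v))}_t(\varphi)+\epsilon)^m\,\mathbb{P}(\bar Q_{v,t}(\varphi)\ge q^{(\tau(v))}_t(\varphi)+\epsilon,\,v\in V)$; on the other hand, since the $m$ walkers collide with probability $O(1/\log N)$, a sequential exploration couples them to $m$ i.i.d.\ copies of the layer chain and yields $\mathbb{P}(Z)\le (q^{(\tau(v))}_t(\varphi)+(1+o(1))/\log N)^m$. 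With $\epsilon=1/\sqrt{\log N}$ the ratio of these two bounds is $\sim e^{-\sqrt{\log N}}$, which beats the union bound. Your annealed computation is essentially the coupling used in the upper bound on $\mathbb{P}(Z)$; the missing ingredient is to exponentiate it through the $m$ walkers rather than to concentrate $\bar Q_{v,t}$ directly.
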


%

\begin{proof}
We follow \cite[Lemma 9]{bordenave18}. Take some $v \in V$ with type $\al \in \D$, and explore the network using $\lfloor \log^2 N \rfloor$ walkers from $(v,i)$, each running until time $\ell + t$.

Let $Z_{vi}$ be the event that none of these walkers find a cycle before time $\ell$ (which is guaranteed if $v \in V$) and also that none of the edge paths explored between distance $\ell$ and $\ell + t$ have weight less than or equal to $\varphi$. Because (recall $A$ is the adjacency matrix of our multitype configuration model)
\[
\mathbb{P}\left(
Z_{vi} \big| A
\right)
\geq \mathbbm{1}_{v \in V} \left(
\bar{Q}_{v,t}(\varphi)
\right)^{\lfloor \log^2 N \rfloor},
\]
by Markov's inequality we find
\[
\begin{split}
\mathbb{P}\left(
Z_{vi} 
\right)
&\geq
\mathbb{E}\left( \left(
\bar{Q}_{v,t}(\varphi)
\right)^{\lfloor \log^2 N \rfloor}
; v \in V
\right)\\
&\geq
(q^{(\tau(v))}_t(\varphi)+\epsilon)^{\lfloor \log^2 N \rfloor}
\mathbb{P}\left( 
\bar{Q}_{v,t}(\varphi)
\geq q^{(\tau(v))}_t(\varphi)+\epsilon
, v \in V
\right).
\end{split}
\]

We can also upper bound $\mathbb{P}\left(
Z_{vi} 
\right)$ by exploring the network with the walkers. For this, iteratively for each walker:
\begin{itemize}
\item Each previous path had weight from distance $0$ to $\ell$ at most $(p^*)^{\ell}$, and so the probability to be at a previously explored vertex at time $\ell$ is at most
\[
\lfloor \log^2 N \rfloor (p^*)^{\ell} \leq \frac{1}{\log N}
\]
\item At each of the $t \cdot \lfloor \log^2 N \rfloor \leq \log^{\nicefrac{10}{3}} N $ vertex explorations beyond the $\ell$-ball we have at most $ \nicefrac{\Delta \log^{\nicefrac{10}{3}} N}{N} $ probability to discover a cycle by repeating a vertex, so this happens with a probability of smaller order.
\end{itemize}

This coupling gives 
$
\mathbb{P}\left(
Z_{vi} 
\right)
\leq
(q^{(\tau(v))}_t(\varphi)+\nicefrac{(1+o(1))}{\log N})^{\lfloor \log^2 N \rfloor}
$
 and we conclude as in \cite[Lemma 9]{bordenave18}, using the union bound and Markov's inequality
 \[
 \begin{split}
 \mathbb{P}&\left(
 \exists v \in V \, : \, 
 \bar{Q}_{v,t}(\varphi) \geq q^{(\tau(v))}_t(\varphi)
 +
\frac{1}{\sqrt{\log N}}
 \right)\\
 &\leq
 N \left(
 \frac{q^{(\tau(v))}_t(\varphi)
 +
\nicefrac{1}{\log N}
+o\left(
\nicefrac{1}{\log N}
\right)
}{q^{(\tau(v))}_t(\varphi)
 +
\nicefrac{1}{\sqrt{\log N}}}
 \right)^{\lfloor \log^2 N \rfloor}
 \leq
 N \left(
 1-\nicefrac{1}{\sqrt{\log N}}
  \right)^{\lfloor \log^2 N \rfloor}
  \sim e^{-\sqrt{\log N}}.
  \end{split}
 \]
 with symmetrical arguments for the reverse direction.
 
 So then, in both directions, by the union bound we have this bound at every representative ${ r}(p)$ in the grid $G$ with failure probability $\lesssim 2e^{-\sqrt{\log N}}\log^{3I}N \leq e^{-\sqrt[3]{\log N}}$. Therefore, we can apply Proposition \ref{prop_Qq_approx} twice in each direction to obtain
\[
q^{(\tau(v))}_t\left(\varphi\left(1+\frac{1}{\log N}\right)^2\right)-
\frac{1}{\sqrt{\log N}}-\frac{2}{\log N}
\leq
\bar{Q}_{v,t}(\varphi)
\]
\[
\leq
q^{(\tau(v))}_t\left(\varphi\left(1-\frac{1}{\log N}\right)^2\right)+
\frac{1}{\sqrt{\log N}}+\frac{2}{\log N}
\]
which gives the claimed bounds for large $N$.
\end{proof}

\section{Cutoff}\label{sec_cutoff}

We first present the following lemma to be able to relate the dynamical variance to the stationary variance, the upper bound of which is well-known but the lower bound we could not find in the literature.

\begin{lemma}\label{lemma_dynamical_variance_bounds}
A Markov chain with separation distance $s(1)\leq 1-\nicefrac{1}{\Delta}$  and dynamical variance $\sigma^2$ (see Definition \ref{def_dynamical_variance})
has
\[\frac{1}{\Delta^2}
\mathbb{V}{\rm ar}_\pi\left(
f
\right)
\leq \sigma^2
\leq
(2 \Delta-1) \mathbb{V}{\rm ar}_\pi\left(
f
\right).
\]
\end{lemma}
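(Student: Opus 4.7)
The plan is to exploit the Doeblin decomposition implied by the separation bound. Since $s(1) \leq 1 - 1/\Delta$ is equivalent to $P(x,y) \geq \pi(y)/\Delta$ for every $x,y$, the kernel splits as $P = \tfrac{1}{\Delta}\mathbf{1}\pi + (1-\tfrac{1}{\Delta})Q$ with $Q$ stochastic; multiplying on the left by $\pi$ and using $\pi P = \pi$ forces $\pi Q = \pi$. I read this as a coin flip at each step: independently at every time, with probability $1/\Delta$ we refresh $X_k \sim \pi$, otherwise we apply $Q$. This produces strong stationary times $0 = \tau_0 < \tau_1 < \cdots$ whose interarrivals are i.i.d.\ $\operatorname{Geom}(1/\Delta)$, independent of the trajectory.

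After shifting $f$ so that $\pi(f) = 0$ (both $\sigma^2$ and $\mathbb{V}\text{ar}_\pi(f)$ are invariant under additive constants), the $Q$-stationarity of $\pi$ makes each $X_k$ inside an excursion marginally $\pi$-distributed, so the i.i.d.\ excursion reward $S := \sum_{k=0}^{\tau_1-1} f(X_k)$ satisfies $\mathbb{E}(S) = 0$ and $\mathbb{E}(f(X_k)^2) = \mathbb{V}\text{ar}_\pi(f)$. The renewal-reward CLT then identifies
\[
\sigma^2 \;=\; \frac{\mathbb{V}\text{ar}(S)}{\mathbb{E}(\tau_1)} \;=\; \frac{\mathbb{E}(S^2)}{\Delta},
\]
using $\mathbb{E}(\tau_1) = \Delta$; the contribution of the incomplete trailing excursion is $O_\mathbb{P}(1)$ and so vanishes under the $1/t$ rescaling in Definition \ref{def_dynamical_variance}, since $\tau_1$ has exponential tails and $f$ is bounded.

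The two bounds then follow from elementary $\operatorname{Geom}(1/\Delta)$ moment calculations. For the upper bound, Cauchy--Schwarz conditional on $\tau_1 = n$ yields $\mathbb{E}(S^2 \mid \tau_1 = n) \leq n \sum_{k=0}^{n-1}\mathbb{V}\text{ar}_\pi(f) = n^2\,\mathbb{V}\text{ar}_\pi(f)$, and averaging over $\tau_1$ gives $\mathbb{E}(S^2) \leq \mathbb{E}(\tau_1^2)\,\mathbb{V}\text{ar}_\pi(f) = (2\Delta^2-\Delta)\mathbb{V}\text{ar}_\pi(f)$, so $\sigma^2 \leq (2\Delta-1)\mathbb{V}\text{ar}_\pi(f)$. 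For the lower bound, the conditional variance formula together with the observation that on $\{\tau_1 = 1\}$ we have $S = f(X_0)$ with $X_0 \sim \pi$ gives
\[
\mathbb{V}\text{ar}(S) \;\geq\; \mathbb{P}(\tau_1 = 1)\,\mathbb{V}\text{ar}(S \mid \tau_1 = 1) \;=\; \frac{\mathbb{V}\text{ar}_\pi(f)}{\Delta},
\]
so $\sigma^2 \geq \mathbb{V}\text{ar}_\pi(f)/\Delta^2$. The one non-trivial step, and the main obstacle I foresee, is the renewal identity itself: transferring the i.i.d.\ CLT at the stopped times $\tau_n$ to the deterministic-time variance $\tfrac{1}{t}\mathbb{V}\text{ar}(\sum_{k=1}^t f(X_k))$ of Definition \ref{def_dynamical_variance} is a standard but slightly delicate renewal-reward argument, whereas the two bounds once this identity is in hand are just arithmetic.
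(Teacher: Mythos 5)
Your proof is correct. For the lower bound it is essentially the paper's argument: the paper also introduces strong stationary times with i.i.d.\ $\operatorname{Geom}(\nicefrac{1}{\Delta})$ gaps, applies the law of total variance, and keeps only the length-one excursions to extract the factor $\nicefrac{1}{\Delta}\cdot\nicefrac{1}{\Delta}=\nicefrac{1}{\Delta^2}$; your version routes this through the single-block identity $\sigma^2=\mathbb{V}{\rm ar}(S)/\mathbb{E}(\tau_1)$ rather than summing over blocks up to time $n$ and invoking the elementary renewal theorem, but the content is the same. For the upper bound you genuinely diverge: the paper expands $\sigma^2=\mathbb{V}{\rm ar}_\pi(f)+2\sum_{t\geq 1}\mathbb{C}{\rm ov}(f(X_0),f(X_t))$ and bounds each covariance by $s(t)\mathbb{V}{\rm ar}_\pi(f)\leq(1-\nicefrac{1}{\Delta})^t\mathbb{V}{\rm ar}_\pi(f)$ via its Lemma \ref{lemma_covariance_and_separation}, summing the geometric series to $2\Delta-1$; you instead apply Cauchy--Schwarz within a regeneration block and use $\mathbb{E}(\tau_1^2)=2\Delta^2-\Delta$, landing on the same constant. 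Your route has the aesthetic advantage that both bounds fall out of one Doeblin/regeneration structure, at the cost of leaning twice on the renewal-reward variance identity (which, as you note, requires some care: the block count $N_t$ is not independent of the block rewards, and the identity in the form $\mathbb{V}{\rm ar}(S)/\mathbb{E}(\tau_1)$ uses that $f$ is centred so that $\mathbb{E}(S)=0$ kills the $\mathbb{V}{\rm ar}(N_t)$ contribution); the paper's covariance-series route for the upper bound sidesteps that identity entirely. One small point of hygiene: your first block runs from the arbitrary initial state rather than from a $\pi$-sample, so the marginal-$\pi$ claim only holds for blocks after the first refresh --- as you observe, this block contributes $O(1)$ and vanishes under the $\nicefrac{1}{t}$ normalisation, but it should be excluded explicitly from the i.i.d.\ family.
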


\begin{proof}
The second inequality relies on the expression
\[
\sigma^2=\mathbb{V}{\rm ar}_\pi\left(
f
\right)
+2\sum_{t=1}^\infty \mathbb{C}{\rm ov}\left(
f(X_0),f(X_t)
\right)
\]
in which we put Lemma \ref{lemma_covariance_and_separation} with $s(t) \leq \left( 1-\nicefrac{1}{\Delta} \right)^t$. For the first inequality, consider the sequence of strong stationary times $(S_k)_k$ with $S_0=1$ and
\[
\p\left(
S_j-S_{j-1}=t
\right)
=\frac{1}{\Delta}\left(
1-\frac{1}{\Delta}
\right)^{t-1} \qquad \text{on } t \geq 1.
\]

By the independence given at strong stationary times
\[
\mathbb{V}{\rm ar}\left( \sum_{t=1}^{n} f(X_t) \Bigg| (S_k)_k \right) 
=
 \sum_{j=1}^{\infty}
\mathbb{V}{\rm ar}\left( \sum_{t=S_{j-1}}^{(S_j-1) \wedge n} f(X_t) \Bigg| (S_k)_k \right).
\]

Now, note for one of these terms
\[
\begin{split}
\mathbb{E}\left(
\mathbb{V}{\rm ar}\left( \sum_{t=S_{j-1}}^{(S_j-1) \wedge n} f(X_t) \Bigg| (S_k)_k \right)\right)
&=
\mathbb{E}\left(
\mathbb{V}{\rm ar}\left( \sum_{t=S_{j-1}}^{(S_j-1) \wedge n} f(X_t) \Bigg| (S_k)_k \right) \mathbbm{1}_{\{S_{k-1}<n\}} \right)\\
&\geq
\mathbb{E}\left(
\mathbb{V}{\rm ar}\left( \sum_{t=S_{j-1}}^{S_j-1} f(X_t) \Bigg| (S_k)_k \right)
\mathbbm{1}_{\{S_j-S_{j-1}=1,S_{j-1} \leq n\}}
\right)\\
&=
\mathbb{E}\left(
\mathbb{V}{\rm ar}_\pi\left( f \right)
\mathbbm{1}_{\{S_j-S_{j-1}=1,S_{j-1} \leq n\}}
\right)\\
\end{split}
\]
so by the law of total variance we have
\[
\begin{split}
\mathbb{V}{\rm ar}\left( \sum_{t=1}^{n} f(X_t)  \right)
&=
\mathbb{E}
\mathbb{V}{\rm ar}\left( \sum_{t=1}^{n} f(X_t) \Bigg| (S_k)_k \right) \\
&\geq
\mathbb{V}{\rm ar}_\pi\left( f \right)
\sum_{j=1}^\infty
\mathbb{P}\left(
S_j-S_{j-1}=1,S_{j-1} \leq n
\right)\\
&=\frac{1}{\Delta}\mathbb{V}{\rm ar}_\pi\left( f \right)\sum_{j=1}^\infty
\mathbb{P}\left(
S_{j-1} \leq n
\right)
\sim \frac{n}{\Delta^2}\mathbb{V}{\rm ar}_\pi\left( f \right).
\end{split}
\]
where the final asymptotic follows from the elementary renewal theorem.
\end{proof}

The following Berry-Esseen theorem allows us to adapt the i.i.d. edge control in \cite{bordenave18} to our context where these edge probabilities instead form a Markov chain. This result is an application of \cite[Theorem C]{kloeckner2019} to get the cutoff shape in terms of the dynamical variance 
$
\sigma^2
$ (see Definition \ref{def_dynamical_variance}). We check Assumptions \ref{assumptions_kloeckner} in Lemma \ref{lemma_banach}.

\begin{definition}[Uniform metric]
Two random variables $X, Y$ on $\mathbb{R}$ with distribution functions $F_X, F_Y$ have uniform distance
\[
K(X,Y):=
\sup_{c \in \mathbb{R}}\left|
F_X\left(
c
\right)
-F_Y\left(
c
\right)
\right|.
\]
\end{definition}

\begin{assumption} \label{assumptions_kloeckner}
The norm $\|\cdot\|$ has the following properties $\forall f, g$:
\begin{itemize}
\item $\|f\|\geq\|f\|_\infty$;
\item $\|f\|\|g\|\leq\|f\|\|g\|$;
\item $\|1\|=1$.
\end{itemize}
\end{assumption}

\begin{theorem}[ {\cite[Theorem C]{kloeckner2019}} ]
\label{thm_clt}
Consider a chain $(X_k)_k$ with transition matrix $P$, stationary distribution $\pi$ and arbitrary initial condition. Assume the chain is a contraction in the norm with Assumptions \ref{assumptions_kloeckner} (for example the norm \eqref{eq_norm_def}) with parameter $\delta$, that is
\[
\pi(x)=0 \quad \implies \quad
\frac{\|Px\|}{\|x\|} \leq 1-\delta.
\]

Then the empirical sum $S_t=\sum_{k=1}^t f(X_k)$, with mean term $\mu=\pi(f)$ and dynamical variance 
$
\sigma^2
$, 
has distance to its central limit bounded by
\[
K\left(
S_t, \mu t + Z \sigma \sqrt{t}
\right)
\leq
\frac{148}{\sqrt{t}}
\left(
1+\frac{1}{\delta}
\right)^2
\left(
1
+
\frac{\left\|f-\mu\right\|}{\sigma}
\right)^3
\]
where $Z$ denotes a standard Gaussian.
\end{theorem}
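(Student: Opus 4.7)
The plan is to use Nagaev's characteristic-function method: encode the law of $S_t$ via iterates of a twisted transition operator, extract the dominant eigenvalue by spectral perturbation, and convert the resulting expansion into a Kolmogorov-distance bound through Esseen's smoothing inequality. Concretely, introduce the twisted operator $(\hat P_\theta h)(x) := \mathbb{E}_x[e^{i\theta f(X_1)} h(X_1)]$ on the complexification of the ambient Banach space, so that the characteristic function of $S_t$ factorises as $\phi_{S_t}(\theta) = \langle \pi_0, \hat P_\theta^t \mathbf{1}\rangle$ for initial distribution $\pi_0$. At $\theta=0$ this is just $P$, which by the contraction hypothesis has spectral gap $\delta$ on the centred subspace.

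Since $\hat P_\theta - P = P(M_\theta - I)$ with $M_\theta$ multiplication by $e^{i\theta f}$, we have $\|\hat P_\theta - P\| = O(|\theta|\,\|f-\mu\|)$ as operators. Analytic perturbation theory then produces, for $|\theta|$ small relative to $\delta/\|f-\mu\|$, a simple dominant eigenvalue $\lambda(\theta)$ analytic in $\theta$, with Taylor expansion
\[
\log \lambda(\theta) = i\mu\theta - \tfrac{1}{2}\sigma^2\theta^2 + O\!\left(|\theta|^3 \|f-\mu\|^3 (1+\delta^{-1})^2\right),
\]
the quadratic coefficient matching the dynamical variance through the Green--Kubo identity $\sigma^2 = \mathbb{V}{\rm ar}_\pi(f) + 2\sum_{k \geq 1} \mathbb{C}{\rm ov}_\pi(f(X_0), f(X_k))$, whose convergence is ensured by the contraction. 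Decomposing $\hat P_\theta^t = \lambda(\theta)^t \Pi_\theta + R_\theta^t$ with spectral radius of $R_\theta$ below $1-\delta/2$ in a neighbourhood of $0$, and using $|e^{i\theta f}|=1$ together with the norm contraction of $P$ to keep $\hat P_\theta$ strictly contractive in the intermediate regime, delivers uniform control of $\phi_{S_t}(\theta)$ for $|\theta|$ up to a cutoff $T \asymp \sigma\sqrt{t}/\|f-\mu\|$.

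Substituting this splitting into the Esseen inequality
\[
K(S_t, \mu t + Z\sigma\sqrt{t}) \leq \frac{1}{\pi}\int_{-T}^{T}\left|\frac{\phi_{S_t}(\theta) - e^{i\mu t \theta - \sigma^2 t \theta^2/2}}{\theta}\right|\de\theta + \frac{C}{T\sigma\sqrt{t}},
\]
the cubic Taylor remainder bounds the integrand by $|\theta|^2 e^{-c\sigma^2 t \theta^2}$ times the stated constant, and the Gaussian integral recovers the $t^{-1/2}$ Berry-Esseen rate with the claimed dependence on $\|f-\mu\|/\sigma$ and $\delta^{-1}$.

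The main obstacle is the explicit bookkeeping of constants. The factor $(1+\delta^{-1})^2$ has to be extracted from an explicit Neumann bound on the reduced resolvent $(I-P)^{-1}$ on the centred subspace, and the factor $(1+\|f-\mu\|/\sigma)^3$ from the cubic Taylor remainder; a subtler point is proving the \emph{uniform} spectral-radius bound for $\hat P_\theta$ across the entire integration range, where one must glue a perturbative small-$\theta$ regime to a large-$\theta$ regime in which only the oscillation of $e^{i\theta f}$ against the contractive operator $P$ produces the necessary cancellation. It is this two-regime split that ultimately fixes the numerical constant $148$ and the particular exponents appearing in the bound.
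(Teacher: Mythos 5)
First, note that the paper does not prove this statement at all: it is imported verbatim from Kloeckner (Theorem C), and the only things the authors verify themselves are that the norm \eqref{eq_norm_def} satisfies Assumptions \ref{assumptions_kloeckner} (Lemma \ref{lemma_banach}) and that the layer chain is a contraction in it (Corollary \ref{cor_contraction}). So there is no in-paper proof to compare against. What you have reconstructed is, correctly, the Nagaev--Guivarc'h strategy that underlies Kloeckner's theorem: twist the transfer operator by $e^{i\theta f}$, perturb the simple dominant eigenvalue off the spectral gap, and feed the resulting expansion of $\phi_{S_t}$ into Esseen's smoothing inequality. As an identification of the method, your outline is the right one.

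As a proof, however, it has a genuine gap, and it is exactly the one you flag and then set aside. The entire content of the statement is the \emph{explicit} constant $148\,(1+\delta^{-1})^2(1+\|f-\mu\|/\sigma)^3$ --- the qualitative CLT is classical --- and ``analytic perturbation theory then produces a simple dominant eigenvalue $\lambda(\theta)$ analytic in $\theta$'' cannot deliver it: Kato-style perturbation theory gives a neighbourhood of validity and a Taylor remainder for $\log\lambda(\theta)$ whose sizes are not effective, and the real work in Kloeckner's argument is to replace that step by explicit Neumann-series and resolvent bounds from which all three displayed factors are extracted. Two further points. The Banach-algebra hypothesis $\|fg\|\le\|f\|\|g\|$ (verified for \eqref{eq_norm_def} in Lemma \ref{lemma_banach}) is not decorative: it is what makes multiplication by $e^{i\theta(f-\mu)}$ a bounded operator with norm controlled by $e^{|\theta|\|f-\mu\|}$, hence what justifies your estimate $\|\hat P_\theta-P\|=O(|\theta|\,\|f-\mu\|)$ in this norm; your sketch never invokes it. And the separate ``large-$\theta$'' regime you describe, in which cancellation must come from the oscillation of $e^{i\theta f}$ against $P$, is not actually required for a Kolmogorov-distance bound of this form: the Esseen truncation $T$ is chosen small enough that the entire integration range stays inside the perturbative regime, at the price of the residual $C/(T\sigma\sqrt{t})$ term --- which is precisely why the bound degrades polynomially in $\|f-\mu\|/\sigma$ rather than requiring any aperiodicity-type input. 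So the architecture is right, but the quantitative core --- which is the theorem --- is missing.
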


\begin{proposition}\label{prop_clt_application}
For every $\al \in \D$ we find
\[
\left|
q^{(\alpha)}_t\left(
\varphi\right) - \Phi\left(
\frac{-\mu t - \log \varphi}{\sigma_p \sqrt{t}} 
\right)
\right|
\leq
\frac{(10 \Delta)^5}{\sqrt{p^3_* \, t}}
.
\]
\end{proposition}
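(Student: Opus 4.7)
The plan is to recast $q^{(\alpha)}_t(\varphi)$ as the tail of a Markov additive functional and then invoke Theorem~\ref{thm_clt}. Setting $f(i):=\log p_i$ on the layer space $[I]$ and $S_t:=\sum_{k=1}^{t}f(M_k)$, I have $\prod_k p_{M_k}=e^{S_t}$, $\pi(f)=-\mu_p$, and $\sigma^2_{L_p}(f)=\sigma_p^2$, so that
\[
q^{(\alpha)}_t(\varphi)-\Phi\!\left(\frac{-\mu_p t-\log\varphi}{\sigma_p\sqrt{t}}\right)
=F_{Z_t}(\log\varphi)-F_{S_t}(\log\varphi),
\]
where $Z_t:=-\mu_p t+\sigma_p\sqrt{t}\cdot Z$ with $Z$ a standard Gaussian. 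Because the normal CDF is continuous, the absolute value of the left hand side is bounded by the uniform distance $K(S_t,Z_t)$ with no loss of constants.

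To apply Theorem~\ref{thm_clt} to $S_t$, I take the norm \eqref{eq_norm_def} with $c=1/\sqrt{p_*}$: Corollary~\ref{cor_contraction} supplies the contraction constant $\delta=1/(4\Delta)$, and Lemma~\ref{lemma_banach} (to follow) verifies the norm hypotheses~\ref{assumptions_kloeckner}. Hence
\[
K(S_t,Z_t)\leq \frac{148}{\sqrt{t}}\,(1+4\Delta)^2\left(1+\frac{\|f-\pi(f)\|}{\sigma_p}\right)^{3},
\]
and the trivial inequality $(1+4\Delta)^2\leq 25\Delta^2$ disposes of the middle factor; the rest of the proof is a careful estimate of $\|f-\pi(f)\|/\sigma_p$.

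The main obstacle lies in this last estimate, specifically in obtaining $\sqrt{p_*}$ rather than $p_*$ in the denominator. For the variance half of $\|\cdot\|$, Lemma~\ref{lemma_dynamical_variance_bounds}---whose hypothesis $s(1)\leq 1-1/\Delta$ is already established by the separation computation \eqref{eq_separation} inside Corollary~\ref{cor_contraction}---yields $\sigma_p\geq\sqrt{\operatorname{Var}_\pi f}/\Delta$, so $\sqrt{\operatorname{Var}_\pi f}/(\sqrt{p_*}\,\sigma_p)\leq \Delta/\sqrt{p_*}$. For the infinity-norm half I use the sharp Poincar\'e-type bound $\operatorname{Var}_\pi g=\sum_i\pi_i g_i^2\geq \pi_{i^\star}\|g\|_\infty^2\geq p_*\|g\|_\infty^2$ for mean-zero $g$, where $i^\star$ realises the infinity norm and $\pi_{\min}\geq p_*$ is \eqref{eq_pi_lower}; applied to $g=f-\pi(f)$ this gives $\|f-\pi(f)\|_\infty\leq\sqrt{\operatorname{Var}_\pi f/p_*}\leq\Delta\sigma_p/\sqrt{p_*}$. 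Summing, $\|f-\pi(f)\|/\sigma_p\leq 2\Delta/\sqrt{p_*}$, hence $(1+\|f-\pi(f)\|/\sigma_p)^3\leq 27\Delta^3/p_*^{3/2}$, and collecting constants gives $K(S_t,Z_t)\leq 148\cdot 25\cdot 27\,\Delta^5/(p_*^{3/2}\sqrt{t})\leq (10\Delta)^5/\sqrt{p_*^3\,t}$. Had I used the cruder inequality $\operatorname{Var}_\pi g\geq\pi_{\min}^2\|g\|_\infty^2$ obtained by selecting just an extremal off-diagonal pair in the symmetric double-sum for variance, the exponent here would have degraded to $\Delta/p_*$ and the target bound would fail.
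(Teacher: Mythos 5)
Your proposal is correct and follows essentially the same route as the paper: apply Theorem~\ref{thm_clt} with the norm \eqref{eq_norm_def}, take $\delta=\nicefrac{1}{4\Delta}$ from Corollary~\ref{cor_contraction}, lower-bound $\sigma_p$ by $\sqrt{\operatorname{Var}_\pi f}/\Delta$ via Lemma~\ref{lemma_dynamical_variance_bounds}, and control $\|f-\pi(f)\|_\infty$ through $\pi_*\|f-\pi(f)\|_\infty^2\leq\operatorname{Var}_\pi(f)$ together with $\pi_*\geq p_*$ from \eqref{eq_pi_lower}, arriving at the same constant $148\cdot 25\cdot 27\,\Delta^5\leq(10\Delta)^5$. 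The only differences are cosmetic (sign convention for $f$ and the explicit recasting as a uniform distance), so nothing further is needed.
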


\begin{proof}
Define
\[
S_t=\sum_{k=1}^t { f}(M_k),
\qquad
{ f}(i)=
-\log \left(
p_i
\right),
\]
recall
\[
q^{(\alpha)}_t\left(
\varphi\right)
=
\mathbb{P}_{\alpha}\left(
S_t
< -\log \varphi
\right),
\qquad
\mu =
-\sum_{i=1}^I \sum_{\al \in \D} \frac{k(\al)}{N} \al_i^+ p_i \log p_i,
\]
and write the abbreviation
\[
\sigma_p=\sigma_{L_p}(\log p).
\]

We find by Theorem \ref{thm_clt}
\[
\left|
q^{(\alpha)}_t\left(
\varphi\right) - \Phi\left(
\frac{-\mu t - \log \varphi}{\sigma_p \sqrt{t}} 
\right)
\right|
\leq
\frac{148}{\sqrt{t}}
\left(
1+\frac{1}{\delta}
\right)^2
\left(
1
+
\frac{\left\|f-\mu\right\|}{\sigma_p}
\right)^3.
\]

The constants are as follows:
\begin{itemize}
\item from Corollary \ref{cor_contraction} we can take $\delta=\frac{1}{4\Delta}$;
\item We lower bound the dynamical variance in Lemma \ref{lemma_dynamical_variance_bounds}. By also recalling the norm definition in \eqref{eq_norm_def}, the lower bound \eqref{eq_pi_lower}, and that $\pi_* \left\|f-\mu\right\|_\infty^2 \leq \operatorname{Var}_\pi(f)$,
\[
\frac{\left\|f-\mu\right\|}{\sigma_p}
\leq
\frac{\|f-\mu\|_\infty + c\sqrt{\operatorname{Var}_\pi(f) }}{\tfrac{1}{\Delta}\sqrt{\operatorname{Var}_\pi(f) }}
\leq
\frac{\Delta}{\sqrt{\pi_*}}+\Delta c
\leq
\frac{\Delta}{\sqrt{p_*}}+\Delta c.
\]
\end{itemize}

So, inserting both bounds with $c=\nicefrac{1}{\sqrt{p_*}}$,
\[
\left|
q^{(\alpha)}_t\left(
\varphi\right) - \Phi\left(
\frac{-\mu t - \log \varphi}{\sigma_p \sqrt{t}} 
\right)
\right|
\leq
\frac{148}{\sqrt{t}}
\left(
1+4\Delta
\right)^2
\left(
1
+
\frac{2 \Delta}{\sqrt{p_*}}
\right)^3
\]
leading to the stated bound with constant factor $148 \cdot (5 \Delta)^2 \cdot (3\Delta)^3=99900\Delta^5$.
\end{proof}

Given this cutoff profile for the approximating Markov chain, we immediately derive a result for the network.

\begin{proposition}\label{prop_network_path_weights}
For $t \sim t_p$, write
\[
\lambda_{\varphi}=
\frac{\mu_p t + \log \varphi}{\sigma_p \sqrt{t}}.
\]

Take $N$ large enough and $\theta$ sufficiently small. Then we have, for $A \in [1, \theta \log^{\nicefrac{1}{3}}N]$,
\[
\sup_{p \in \Pab}
\max_{i \in [N]}
\left|
Q_{i,t}(\varphi)
- \Phi\left(
-\lambda_{\varphi}
\right)
\right|
\lesssim
\frac{10^5 \Delta^2  \sqrt{\log \Delta I}}{A^{\nicefrac{3}{2}}}
+\frac{2 \Delta \left( \log \log N \right)^2 \sqrt{\log \Delta I}}{ \log^{\nicefrac{1}{3}} N},
\]
with probability at least $1-2e^{-\sqrt[3]{\log N}}$.
\end{proposition}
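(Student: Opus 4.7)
The plan is to chain the two approximations already in place to control the averaged path-weight $\bar Q_{v,t}(\varphi)$, transfer this to the pointwise $Q_{i,t}(\varphi)$ via an $\ell$-step Markov decomposition, and finally promote the bound across the continuum $\Pab$ by a union bound over the grid representatives from Remark \ref{remark_rel_error}.

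Concretely, for any grid representative $p_g\in\Pab$ and any $v\in V$, Propositions \ref{prop_Qbar_and_q} and \ref{prop_clt_application} together give
\[
\bar Q_{v,t}(\varphi)=\Phi(-\lambda_\varphi)+O\!\left(\tfrac{\Delta^{5}}{\sqrt{p_*^{3}t}}\right)+O\!\left(\tfrac{1}{\sqrt{\log N}}\right),
\]
because the $(1\pm 1/\log N)^{2}$ deformation of $\varphi$ inherited from Proposition \ref{prop_Qbar_and_q} translates through the Lipschitz CDF into an $O(1/\log N)$ perturbation which is negligible within the window $|\lambda_\varphi|=O(1)$. Inserting $t\sim\log N/\mu_p$ and $p_*\ge A\log^{-1/3}N$ turns the Berry--Esseen term into $O(\Delta^{5}\sqrt{\mu_p}/A^{3/2})$, and $\sqrt{\mu_p}\le\sqrt{\log(\Delta I)}$ recovers the first half of the stated error.

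To pass from $\bar Q_{v,t}$ to $Q_{i,t}$ I would use the Markov decomposition
\[
Q_{i,t}(\varphi)=\sum_{\substack{\mathfrak p_1:\, i\to w\\ |\mathfrak p_1|=\ell}}\mathbf{w}(\mathfrak p_1)\,Q_{w,t-\ell}\!\left(\varphi/\mathbf{w}(\mathfrak p_1)\right).
\]
For $i\in V$ the length-$\ell$ prefix is confined to the out-tree $B^+(i,h)$, so $-\log\mathbf{w}(\mathfrak p_1)$ is exactly distributed as the layer-chain partial sum $\sum_{k=1}^\ell\log(1/p_{M_k})$ started from $\tau(i)$, which Proposition \ref{prop_clt_application} places within Kolmogorov distance $O(\Delta^{5}/\sqrt{p_*^{3}\ell})$ of a Gaussian with mean $\mu\ell$ and variance $\sigma^{2}\ell$. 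Lemma \ref{lemma_rerooting} then shows the endpoint $w$ lies in $V$ with probability at least $1-(p^{*})^{\ell}\ge 1-\log^{-3}N$, so iterating the $\bar Q$-to-$Q$ transfer on the shorter horizon $t-\ell$ (or, equivalently, running an induction from the trivial base $t\le h$, where the out-tree structure gives $Q_{v,t}=q_t^{(\tau(v))}$ exactly) yields a Gaussian approximation for the suffix. Combining the two Gaussians via the convolution identity $\mathbb{E}_Z[\Phi(-a-bZ)]=\Phi(-a/\sqrt{1+b^{2}})$ reconstitutes $\Phi(-\lambda_\varphi)$ up to the rescaling error $O(\ell/t)$ between $\sigma\sqrt{t-\ell}$ and $\sigma\sqrt t$; with $\ell\asymp\log\log N/\log(1/p^{*})$ and $\log(1/p^{*})\gtrsim\log^{-1/3}N$ in the regime of interest, this tracking produces the second half of the stated bound.

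For $i\notin V$ the same decomposition applies and Lemma \ref{lemma_rerooting} gives the prefix endpoint in $V$ at cost $(p^{*})^{\ell}\le\log^{-3}N$, reducing to the previous case. Uniformity over $\Pab$ follows from a union bound of the estimate above across the $\log^{3I}N$ grid representatives (absorbed into the failure probability $2e^{-\sqrt[3]{\log N}}$ exactly as in Proposition \ref{prop_Qbar_and_q}) and from Remark \ref{remark_rel_error}: the relative error $A^{-1}\log^{-8/3}N$ between $p$ and $r(p)$ propagates through Proposition \ref{prop_sigma_approx} into shifts of $\lambda_\varphi$ of strictly lower order than either error term. The principal obstacle is the middle step, where the prefix and suffix Gaussians must interlock through the convolution identity without accumulating error larger than $(\log\log N)^{2}/\log^{1/3}N$; the delicate piece is bootstrapping the $\bar Q$ control cleanly into a pointwise $Q$ statement over the many iterations of length $\ell$ required to fill $[0,t]$, while handling the atypical prefix paths whose weights deviate strongly from $e^{-\mu\ell}$.
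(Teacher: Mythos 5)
Your first step --- combining Propositions \ref{prop_Qbar_and_q} and \ref{prop_clt_application} to pin $\bar Q_{v,t}(\varphi)$ to $\Phi(-\lambda_\varphi)$ up to the Berry--Esseen term $O(\Delta^5/\sqrt{p_*^3 t})$, and converting $p_*\geq A\log^{-1/3}N$, $t\sim \log N/\mu_p$ into the $A^{-3/2}$ error --- is exactly the paper's argument. The gap is in your transfer from $\bar Q_{v,t}$ to $Q_{i,t}$. You propose to Gaussian-approximate the length-$\ell$ prefix $-\log\mathbf{w}(\mathfrak p_1)$ and then convolve prefix and suffix Gaussians, iterating over blocks of length $\ell$. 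This fails quantitatively: Proposition \ref{prop_clt_application} applied over a horizon of length $\ell\asymp\log\log N$ gives Kolmogorov error $O(\Delta^5/\sqrt{p_*^3\,\ell})$, and with $p_*^{-3/2}$ as large as $\log^{1/2}N/A^{3/2}$ this is $\gg 1$, so the prefix is nowhere near Gaussian on that timescale. Iterating the transfer over $\sim t/\ell$ blocks would moreover accumulate one such error per block. You flagged this interlocking as the "principal obstacle" yourself; it is not a technicality that resolves, it is the wrong decomposition.

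The paper's transfer is one-shot and avoids any distributional control of the prefix. By Lemma \ref{lemma_rerooting} the walk reaches a nice vertex within $\ell$ steps except with probability $(p^*)^\ell\leq\log^{-3}N$, and $\bar Q_{w,\cdot}$ is by definition the $P^\ell$-average of $Q$, so
\[
\max_{v\in[N]}Q_{v,t}(\varphi)\leq\max_{v\in V}\bar Q_{v,t-2\ell}(\varphi)+(p^*)^{\ell},
\]
and the $2\ell$ discarded steps are handled by the crude deterministic bound $\bar Q_{v,t-2\ell}(\varphi)\leq\bar Q_{v,t}\bigl(\varphi\, p_*^{2\ell}\bigr)$: the prefix weight is simply sandwiched between $p_*^{2\ell}$ and $(p^*)^{2\ell}$ rather than approximated in law. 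The resulting shift $|\log p_*^{2\ell}|\lesssim 3\log^2\log N$ in $\log\varphi$ moves $\lambda_\varphi$ by $O\bigl(\Delta(\log\log N)^2\sqrt{\log\Delta I}/(\sqrt A\,\log^{1/3}N)\bigr)$, which after the Lipschitz bound on $\Phi$ is precisely the second error term in the statement --- so that term is the price of the crude prefix bound, not of a Gaussian convolution. Your uniformity-over-$\Pab$ step is fine (though Proposition \ref{prop_Qbar_and_q} already carries the grid union bound internally); replacing your blockwise induction with this single rerooting closes the argument.
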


\begin{proof}
$\bar{Q}$ at time $t-\ell$ depends on edges that $Q$ at time $t$ also depends on.
Take $\ell$ steps to hit $V$ 
 (recalling Lemma \ref{lemma_rerooting}) and another $\ell$ before $\bar{Q}$ starts recording. This gives
\[
\max_{v \in [N]}
Q_{v,t}(\varphi)
\leq
\max_{v \in V}
\bar{Q}_{v,t-2\ell}(\varphi )
+
(p^*)^{\ell}
\]
and also, because we can control the minimum edge weight
\[
\bar{Q}_{v,t-2\ell}(\varphi )
\leq
\bar{Q}_{v,t}\left(
\varphi
\left(
p_* 
 \right)^{2\ell}
\right).
\]

The idea behind this bound is to use
\[
\left|\log\left( p_*^{2\ell} \right)\right|
\lesssim
\frac{6 \log \log N \log \frac{1}{p_*}}{\log 2}
\leq
3  \log^2 \log N 
,
\]
then lower bound $t_p$ by Lemma \ref{prop_max_entropy} and $\sigma_p$ by Lemmas \ref{prop_min_general_variance} and \ref{lemma_dynamical_variance_bounds}
\[
\frac{|\log\left( p_*^{2\ell} \right)|}{\sigma_p\sqrt{t}}
\sim
\frac{|\log\left( p_*^{2\ell} \right)|}{\sigma_p\sqrt{t_p}}
\lesssim
\frac{3 \left( \log \log N \right)^2 \sqrt{\log \Delta I}}{\sigma_p\sqrt{\log N}}
\leq
\frac{3 \Delta \left( \log \log N \right)^2 \sqrt{\log \Delta I}}{\sqrt{A} \log^{\nicefrac{1}{3}} N}
\]
i.e. this polylogarithmic change in $\varphi$ is a small correction to $\lambda_{\varphi}$, given that $A \geq 1$.

By Propositions \ref{prop_Qbar_and_q} and \ref{prop_clt_application} and that $\Phi$ has Lipschitz constant $\nicefrac{1}{\sqrt{2 \pi}}<\nicefrac{2}{3}$,
\[
\begin{split}
\max_{v \in [N]}
Q_{v,t}(\varphi)
&\leq
\max_{\al \in \D}
q^{(\alpha)}_t\left(
\varphi
\left(
 p_* 
 \right)^{2\ell}
 \left(
 1
 -
\frac{1}{\log N}
\right)^2
\right)
+
(p^*)^{\ell}
+
\frac{2}{\sqrt{\log N}}\\
&\lesssim
\Phi\left(-\lambda_{\varphi}+\frac{3 \Delta \left( \log \log N \right)^2 \sqrt{\log \Delta I}}{\sqrt{A} \log^{\nicefrac{1}{3}} N}
-\frac{2}{\sigma_p \sqrt{t}}
 \log  \left(
 1
 -
\frac{1}{\log N}
\right)
\right)\\&\hspace{15em}+\frac{(10 \Delta)^5}{\sqrt{p^3_* \, t}}
+
\frac{1}{\log^3 N}
+
\frac{2}{\sqrt{\log N}}\\
&\leq
\Phi\left(-\lambda_{\varphi}+\frac{3 \Delta \left( \log \log N \right)^2 \sqrt{\log \Delta I}}{\sqrt{A} \log^{\nicefrac{1}{3}} N}
\right)+\frac{(10 \Delta)^5}{\sqrt{p^3_* \, t}}
+
\frac{1}{\log^3 N}
+
\frac{2}{\sqrt{\log N}}\\
&\leq
\Phi(-\lambda_{\varphi})
+\frac{2 \Delta \left( \log \log N \right)^2 \sqrt{\log \Delta I}}{\sqrt{A} \log^{\nicefrac{1}{3}} N}
+\frac{(10 \Delta)^5}{\sqrt{\nicefrac{A^3}{\log \Delta I}}},
\end{split}
\]
while in the other direction excluding $2 \ell$ steps has a controllable effect on the weight of the path
\[
\min_{v \in [N]}
Q_{v,t}(\varphi)
\geq
\min_{v \in V}
\bar{Q}_{v,t-2\ell}(\varphi (p_*)^{-2\ell})
-
(p^*)^{\ell}
\geq
\min_{v \in V}
\bar{Q}_{v,t}(\varphi (p_*)^{-2\ell})
-
(p^*)^{\ell}
\]
and so
\[
\begin{split}
\min_{v \in [N]}
Q_{v,t}(\varphi)
&\geq
\min_{\al \in \D}
q^{(\alpha)}_t\left(
\varphi
\left(
p_*
 \right)^{-2\ell}
 \left(
 1
+
\frac{1}{\log N}
\right)^2
\right)
-\frac{1}{\log^3 N}
-
\frac{2}{\sqrt{\log N}}\\
&\gtrsim
\Phi(-\lambda_{\varphi})
-\frac{2 \Delta \left( \log \log N \right)^2 \sqrt{\log \Delta I}}{\sqrt{A} \log^{\nicefrac{1}{3}} N}
-
\frac{2}{\sigma_p \sqrt{6 \pi t_p}}
 \log  \left(
 1
 +
\frac{1}{\log N}
\right)
-\frac{(10 \Delta)^5}{\sqrt{p^3_* \, t_p}}\\
&\gtrsim
\Phi(-\lambda_{\varphi})
-\frac{2 \Delta \left( \log \log N \right)^2 \sqrt{\log \Delta I}}{\sqrt{A} \log^{\nicefrac{1}{3}} N}
-
\frac{2 \sqrt{\log(\Delta I)}}{ \sqrt{6 \pi A}\log^{\nicefrac{4}{3}}N}
-\frac{(10 \Delta)^5}{\sqrt{\nicefrac{A^3}{\log \Delta I}}}
\end{split}
\]

Error probabilities from Lemma \ref{lemma_rerooting} and Proposition  \ref{prop_Qbar_and_q} were $4 \Delta^2N^{-1/5}$ and $e^{-\sqrt[3]{\log N}}$ which gives the stated error probability by summation.
\end{proof}

\begin{definition}[The trees $T_x$ and tail sets $\mathcal{E}_x^{(i)}$]\label{def_T_construction}
Construct at every root $x\in[N]$ the $x$-rooted tree $T_x$  by the following algorithm, throughout which we associate to every vertex $y \in T_v$ the weight $w(y)$ of the path from the root to its parent edge. Further, we omit explored edges producing a cycle (by iteratively adding only the \emph{undiscovered} children) so that this path is unique.
\begin{description}
\item[Stage 1] Initially $T_x=(\{x\},\emptyset)$ with $x$ set as the root.
\item[Stage 2] Substitute $p$ with its representative ${ r}(p)$, so that for every grid element $g \in G$ (as described in Section \ref{sec_approximation}) we have only one family of trees $(T_x)_{x \in [N]}$.
\item[Stage 3\label{stage_3}] Explore the vertex of maximum weight (or the minimum index of those in the case of ties), adding any new children to the tree and omitting cycle edges, until the maximal weight in the unexplored vertices of $T_x$ is at most $\tfrac{1}{p_* N}\log^{4+4\Delta \log (\Delta I)} N$.
\item[Stage 4\label{stage_4}] Now in the breadth-first order, iteratively explore a vertex of type
\[
j=\argmax_i (W_i-\pi(i))
\]
where $W_i$ denotes the total weight of unexplored vertices with parent edge of type $i$. This continues until 
\[
\de_{\rm TV}(W,\pi_{{ r}(p)})\leq \frac{1}{\log^2 N}.
\]
or until some vertex is explored to an additional depth $4\Delta \log \log N$, whichever comes first.
\end{description}
Write $\mathcal{E}_x$ for the set of unexplored tails (out-edges) at the end of the $T_x$ construction. 
List these as sets $\mathcal{E}_x^{(1)}, \dots, \mathcal{E}_x^{(I)}$ containing the tails of each type. Here (as everywhere else in the article) we take $N$ large enough.
\end{definition}

\begin{proposition}\label{prop_cycle_weight}
Almost surely for $A \geq 1$
\[
\max_{x \in [N]}
\sup_{p \in \Pab}
|T_x| \leq \frac{2 N}{ \log^{\nicefrac{8}{3}} N},
\]
and the total weight $C_x$ of cycle edges omitted in the construction of $T_x$ has
\[
\sup_{p \in \Pab}
\max_{v \in V} C_v \lesssim
\frac{2}{\log N},
\]
with probability $1-e^{-\log^2 N}$
\end{proposition}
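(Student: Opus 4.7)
The statement has two parts; I sketch each separately.

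For the size bound, the key observation is that since $\sum_i p_i \alpha_i^+ = 1$ for every type $\alpha \in \D$, the total weight $W_d := \sum_{y:\operatorname{depth}(y)=d} w(y)$ is non-increasing in $d$ and bounded by $W_0 = 1$, so $\sum_{y \in T_x} w(y) \leq D+1$ where $D$ is the maximum depth. By the Stage 3 stopping rule every explored vertex has weight at least $\tau := (p_* N)^{-1} \log^{4+4\Delta \log(\Delta I)} N$, so the number of Stage 3-explored vertices is at most $(D+1)/\tau$. To bound $D$ I would note that a depth-$d$ vertex has weight at most $(p^*)^d$, and in $\Pab$ the constraint $\sum_i p_i \leq 1$ (since $\alpha_i^+ \geq 1$) together with $I\geq 2$ forces $p^* \leq 1 - p_*$, so $\log(1/p^*) \geq p_* \geq A\log^{-1/3}N$, yielding $D \lesssim \log^{4/3}N/A$. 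Combining with the fact that unexplored Stage 3 leaves have weight in $[p_* \tau, \tau]$ and hence number at most $1/(p_*\tau)$, one obtains $|T_x^{(3)}| \lesssim N/(A\log^{8/3+4\Delta\log(\Delta I)}N)$. Stage 4 then extends the tree from those leaves by additional depth at most $4\Delta\log\log N$, multiplying the descendant count per leaf by $(\Delta I)^{4\Delta \log \log N} = \log^{4\Delta\log(\Delta I)}N$; the two contributions combine to give $|T_x| \leq 2N/\log^{8/3}N$ for $A \geq 1$ and $N$ large.

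For the cycle-weight bound, since revealing a new out-edge in layer $i$ draws a uniformly random remaining layer-$i$ tail, conditional on the history the probability this edge closes a cycle is at most $\Delta|T_x|/N \leq 2\Delta/\log^{8/3}N$. Moreover $v \in V$ guarantees no cycles arise at depth $\leq h$, so only edges at depth $>h$ contribute to $C_v = \sum_e w_e X_e$. A direct computation then gives the conditional expectation
\[
\mathbb{E}[C_v \mid \mathcal{F}] \leq \frac{\Delta|T_x|}{N}\sum_e w_e \leq \frac{\Delta|T_x|(D+1)}{N} \lesssim \frac{1}{\log^{4/3}N},
\]
comfortably below $2/\log N$.

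The main obstacle, then, is the concentration at rate $e^{-\log^2 N}$. A direct Freedman bound on $\sum_e w_e X_e$ with $\sigma^2 = O(1/\log^{4/3}N)$ yields only tails $\exp(-O(1/\log^{2/3}N))$, which is far too weak. My plan is to stratify edges by weight scale $w_e \in [2^{-k-1}, 2^{-k}]$: in each stratum there are at most $2^{k+1}(D+1)$ sub-independent Bernoulli cycle indicators of parameter $\leq 2\Delta/\log^{8/3}N$, and Chernoff then supplies super-polynomial concentration of the stratum's cycle count around its mean. Multiplying by the weight cap $2^{-k}$, summing the $O(\log N)$ active strata, and finally union-bounding over the $|G| \leq \log^{3I}N$ representative probabilities and the $|V| \leq N$ roots (easily absorbed into $e^{-\log^2 N}$) should deliver the claimed statement. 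The delicate step is obtaining useful Chernoff tails in strata where the expected cycle count is itself polynomial in $N$, where the balance between the cap $2^{-k}$ and the count deviation has to be controlled carefully.
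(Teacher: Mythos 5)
Your handling of the first claim and of the conditional mean of $C_v$ is essentially the paper's: it likewise counts the at most $N\log^{-4-4\Delta\log(\Delta I)}N$ unexplored leaves after Stage 3, multiplies by $(\Delta I)^{4\Delta\log\log N}=\log^{4\Delta\log(\Delta I)}N$ for Stage 4 and by the depth bound $D\leq 2\log^{\nicefrac{4}{3}}N$ (derived exactly as you do, from $\log(1/p^*)\geq 1-p^*\geq p_*$), and it uses the same per-edge collision probability $\delta=2\Delta\log^{-\nicefrac{8}{3}}N$. Where you genuinely diverge is the concentration step. The paper decomposes $C_v$ by \emph{generation} rather than by weight scale: for each of the $D-h$ generations beyond depth $h$ it applies Bernstein to the weighted Bernoulli sum $\sum_i w_iB_i$, and the whole argument hinges on the fact that every contributing weight satisfies $w_i\leq M:=(p^*)^h\leq\exp\bigl(-\log^{\nicefrac{2}{3}}N/(9\log(\Delta I))\bigr)$, so the Bernstein denominator is $O(M\operatorname{polylog}N)$ and the tail is $\exp(-M^{-1+o(1)})$, which swamps $e^{-\log^2N}$ even after the union bound; a deviation allowance of $\log^{-\nicefrac{7}{3}}N$ per generation over $\leq 2\log^{\nicefrac{4}{3}}N$ generations then produces the $2/\log N$.

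Your dyadic stratification can be made to close, but only by importing that same mechanism, which you state in the mean computation and then drop in the concentration paragraph: since for $v\in V$ only depth-$>h$ edges carry cycle indicators, every \emph{active} stratum has cap $2^{-k}\leq 2(p^*)^h=2M$, stretched-exponentially small, so an additive slack of $\Theta(\log^2N)$ cycles per stratum (which Chernoff grants with exponent $\gtrsim\log^2N$ regardless of whether the stratum mean is tiny or polynomial in $N$) costs total weight $O(\log N\cdot M\log^2N)=o(1/\log N)$. Without this, the low-$k$ strata are fatal: a cap $2^{-k}\geq\log^{-3}N$ would force a count deviation $\ll\log^2N$, for which no Chernoff bound at rate $e^{-\log^2N}$ is available. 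Relatedly, your flagged ``delicate step'' is the benign case -- when $m_k$ is polynomial in $N$ a constant-factor multiplicative deviation has exponent $\Omega(m_k)\gg\log^2N$ and costs only a constant multiple of the mean contribution, which sums to $O(\log^{-\nicefrac{4}{3}}N)$ by your own computation -- whereas the per-stratum trial count $2^{k+1}(D+1)$ must not be used to bound the sum of the stratum means (over $\Theta(\log N)$ strata it yields only $O(\log^{-\nicefrac{1}{3}}N)$, overshooting the target); use $\sum_k2^{-k}m_k=\mathbb{E}[C_v\mid\mathcal{F}]$ directly. Finally, the statement is uniform over $p\in\Pab$, not just over the $\log^{3I}N$ representatives; the paper closes this by observing that the trees are built at $r(p)$ and the weights change only by a factor $1+o(1)$.
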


\begin{proof}
No vertex explored in \ref{stage_3} of the construction had weight less than $\nicefrac{\log^{4+4\Delta \log (\Delta I)} N}{p_* N}$ and so the weight of unexplored vertices is at least $\nicefrac{\log^{4+4\Delta \log (\Delta I)} N}{ N}$. These weights form a sub-probability distribution on the set of unexplored vertices after \ref{stage_3}, and so there are at most $\nicefrac{ N}{\log^{4+4\Delta \log (\Delta I)} N}$ such vertices. After \ref{stage_4}, due to the limit on the additional depth, there are instead at most
\[
\frac{ N}{\log^{4+4\Delta \log (\Delta I)} N}
\cdot
(\Delta I)^{4\Delta \log \log N}
=
\frac{  N}{\log^4 N}.
\]

This just counts the final layer, but every vertex in $T_x$ is on some path to a vertex in this final layer and by construction (and using that $ p^* \leq 1-p_*$) the maximum depth $D$ of the tree has
\[
D\leq \frac{\log \left(
\nicefrac{p_* N}{\log^{4+4\Delta \log (\Delta I)} N}
\right)}{\log \nicefrac{1}{p^*}}
+4\Delta \log \log N
\lesssim
\frac{\log N}{\log \nicefrac{1}{1-p_*}}
\lesssim
\frac{\log N}{\log^{-\nicefrac{1}{3}}N}
\]
so in particular $D \leq 2\log^{\nicefrac{4}{3}} N$
which implies  the claimed uniform bound almost surely. 
Hence, any exploration of an edge finds an existing vertex in the tree with probability bounded by
\[
\delta=\frac{2 \Delta}{ \log^{\nicefrac{8}{3}} N}.
\]

Any nice vertex \eqref{eq_nice_vertex} can explore $h$ generations without seeing a tree, and the maximum weight in this generation is almost surely bounded by
\[
M=(p^*)^{h}
\leq
\exp\left(
-\frac{\log^{\nicefrac{2}{3}}N}{9 \log (\Delta I)}
\right).
\]

This means that each generation between $h$ and $D$ finds cycle mass which can be bounded by the weighted Bernoulli sum $\sum_i w_i B_i$ for $B_i \sim {\rm Ber}(\delta)$ i.i.d. and $w\leq M$. By a Bernstein inequality:
\[
\begin{split}
\mathbb{P}\left(\sum_i w_i B_i \geq \frac{1}{\log^{\nicefrac{7}{3}} N}+\delta\right)
&\leq
\exp\left(
-\frac{\frac{1}{2\log^{\nicefrac{14}{3}} N}}{M\delta(1-\delta)+ \frac{M}{3\log^{\nicefrac{7}{3}} N} }
\right)\\
&\leq \exp\left(
-M^{-1+o(1)}
\right).
\end{split}
\]
%
%
%
%
%

So by the union bound over $D-h=O(\log^{\nicefrac{4}{3}} N)$ generations and $\log^{3I}N$ representative vectors $p_g$ on the grid, this is with high probability not seen for all nice vertices and all generations and any $\{p_g:g\in G\}$. 
By adding up the cycle mass we have the claimed bound on total cycle weight at the representative, but then we constructed the trees in Definition \ref{def_T_construction} using the representative so the only change between $p$ and ${ r}(p)$ is in the weights given and that is uniformly $(1+o(1))$.
\end{proof}

We now check that the tree construction's imposed maximal additional depth $4\Delta \log \log N$ is unlikely to be needed at any point.

\begin{proposition}\label{prop_good_alg}
With probability $1-e^{-\sqrt{N}}$, every tree exploration $(T_x)_x$ terminates before the maximal depth and so we achieve the total variation bound
\[
\sup_{p \in \Pab}
\max_{x \in [N]}
\de_{\rm TV}\left(
w\left(\mathcal{E}_x^{(\cdot)}\right),
\pi
\right)
\lesssim
\frac{1}{\log^2 N}.
\]
\end{proposition}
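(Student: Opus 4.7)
My proposal is to analyse Stage 4 as an approximate implementation of the layer chain $L$, then invoke the contraction of $L$ from Corollary \ref{cor_contraction} together with a martingale concentration step.

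First, I would establish the expected dynamics. Conditional on the tree built so far and on the parent-edge type $j$ of the next leaf $y$ to be explored, the configuration-model sampling gives the full vertex type $\tau(y)=\alpha$ with conditional probability proportional to $k(\alpha)\alpha^-_j$. A direct calculation, matching \eqref{eq_layer_chain}, shows that the vector $W\in\mathbb{R}^I$ of type-weighted unexplored leaf masses satisfies, in expectation,
\[
W\mapsto W+w(y)\bigl(L(j,\cdot)-e_j\bigr),
\]
i.e.\ one step of $L$ applied to the $w(y)$-mass at type $j$.

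Second, I would batch Stage 4 into ``waves,'' each of which processes every currently unexplored leaf exactly once. After a complete wave the full weight vector $W$ has been acted on by $L$, so by Corollary \ref{cor_contraction} the excess $W-\pi$ contracts in the norm \eqref{eq_norm_def} by a factor of $1-\tfrac{1}{4\Delta}$ per wave. Each wave adds one level of depth, so $O(\Delta \log \log N)$ waves both fit inside the depth budget $4\Delta \log \log N$ and drive $\de_{\rm TV}(W,\pi)$ below $1/\log^2 N$. The greedy rule of Definition \ref{def_T_construction}, which always attacks the type of maximum positive excess first, can only accelerate this contraction; I would formalise this with a monotonicity comparison on the potential $\sum_i (W_i-\pi(i))^+$.

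Third, to control deviations from expectation, I view the sequence $W^{(k)}$ as a martingale plus the contractive drift identified above: the noise increments $W^{(k+1)}-\mathbb{E}(W^{(k+1)}\mid \mathcal{F}_k)$ are uniformly bounded by $2w_{\max}$, where $w_{\max}$ is the Stage 3 weight ceiling from Definition \ref{def_T_construction}, of order $\operatorname{poly}(\log N)/(p_* N)$. Applying Azuma--Hoeffding coordinatewise over the at most $\operatorname{poly}(N)$ Stage 4 steps and union-bounding over the $I$ coordinates yields per-root failure probability $\exp(-\Omega(\sqrt{N}))$. A further union bound over the $N$ roots and the $\log^{3I} N$ grid representatives then gives the claimed $1-e^{-\sqrt{N}}$.

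The main obstacle is the rigorous reduction ``one wave equals one application of $L$'' for the greedy rule: the adaptive choice of $j$ makes Stage 4 non-Markovian in $W$ alone, and a careful potential-function comparison is needed to ensure the contraction rate per wave is not degraded by a factor depending on $N$. A secondary difficulty is making the concentration tight enough to survive the union bound at the stated $e^{-\sqrt{N}}$ level, which relies crucially on $w_{\max}$ being very small after Stage 3 and on the total number of Stage 4 steps being only polynomial in $N$.
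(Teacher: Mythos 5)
There is a genuine gap in the key step of your argument: Corollary \ref{cor_contraction} does not say what you need it to say. It asserts that $L$ contracts mean-zero \emph{functions} in the mixed norm $\|f\|_\infty + c\sqrt{\operatorname{Var}_\pi f}$; your Stage-4 object $W-\pi$ is a signed \emph{measure}, which evolves under the adjoint action, and contraction in that functional norm does not transfer to contraction of $\de_{\rm TV}(W,\pi)$ (the $\|\cdot\|_\infty$ part of the norm is merely non-expanding under $L$, and TV duality pairs against $\|f\|_\infty$ alone). The tool that does work, and the one the paper actually uses, is the Doeblin minorisation $L_{ij}\geq \pi(j)/\Delta$ from \eqref{eq_separation}: each time a leaf of weight $w$ is explored, Bernstein's inequality guarantees that at least $w\pi(i)/(2\Delta)$ of new weight of every type $i$ appears, so a $\nicefrac{1}{2\Delta}$ fraction of the processed weight can be coupled \emph{directly} to $\pi_{r(p)}$ at each level, leaving uncoupled mass that shrinks like $(1-\nicefrac{1}{2\Delta}+\tilde M I)^k$. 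This entirely sidesteps your second, self-acknowledged obstacle --- the reduction ``one wave $=$ one application of $L$'' for the adaptive greedy rule --- which in your scheme remains unresolved and is not merely technical: the greedy type-selection interleaves leaves of different depths, so $W$ does not evolve as $WL$ per wave, and your proposed potential $\sum_i(W_i-\pi(i))^+$ comparison is exactly the hard part you would still have to prove.

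Two smaller points. Your concentration step is quantitatively fine (Azuma with increments $O(\tilde M)$ over polynomially many steps comfortably beats $e^{-\sqrt N}$, matching the paper's Bernstein computation), and your identification of the expected one-step dynamics with \eqref{eq_layer_chain} is correct up to an $O(|T_x|/N)$ depletion correction. But you never reconcile the coupling target $\pi_{r(p)}$ (the algorithm runs at the grid representative) with the $\pi=\pi_p$ appearing in the statement; the paper closes this with the triangle inequality and Remark \ref{remark_rel_error}, contributing an additive $\tfrac{1}{A}\log^{-\nicefrac{8}{3}}N$, and you would need the same step.
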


\begin{proof}

In \ref{stage_3} of the construction, we have maximal weight $ \tilde M=\nicefrac{\log^{4+4\Delta \log (\Delta I)} N}{p_* N}$ and recall via \eqref{eq_separation} that we have separation distance after $1$ timestep $s(1)\leq 1-\nicefrac{1}{\Delta}$. 

In the graph, exploring a set of vertices with total weight $w$ and maximum weight $\tilde{M}$ finds new vertices with, again by a Bernstein inequality,
\[
\begin{split}
\mathbb{P}(\text{less than }\nicefrac{w\pi(i)}{2\Delta} \text{ of type } i)
&\leq
\exp\left(
-\frac{\nicefrac{w^2\pi(i)^2}{8\Delta^2}}{\tilde{M}+\nicefrac{\tilde{M}w\pi(i)}{6\Delta}}
\right)\\
&\leq
\exp\left(
-\frac{\nicefrac{w^2 p_i^2}{8\Delta^2}}{\tilde{M}+\nicefrac{\tilde{M}w p_i}{6}}\right)
\leq
\exp\left(
-\frac{w^2 p_*^3 N}{10\Delta^2 \log^{4+4\Delta \log (\Delta I)} N}
\right).
\end{split}
\]

Note that if \ref{stage_4} is still continuing then we have $w \geq \nicefrac{1}{\log^2 N}$ and so we can claim this with high probability never happens for any generation, type and root vertex with a failure probability which is still exponential (up to a polylogarithmic factor in the exponent). We remove this factor in the claimed stretched exponential.

Outside of this failure event, in \ref{stage_4} we will iteratively couple what weight we can to $\pi_{r(p)}$, allowing a discretisation error, before exploring the other vertices. This terminates before exploring an additional depth at most $k$, where
\[
\left(
1-\frac{1}{2\Delta}
+\tilde{M}I 
\right)^k \leq 
\frac{1}{\log^2 N} 
\]
and so we can take $k=4 \Delta \log \log N $. We can finally say this is close to the real stationary distribution at $p$ through
\[
\de_{\rm TV}\left(
w\left(\mathcal{E}_x^{(\cdot)}\right),
\pi
\right)
\leq
\de_{\rm TV}\left(
w\left(\mathcal{E}_x^{(\cdot)}\right),
\pi_{{ r}(p)}
\right)
+
\de_{\rm TV}\left(
\pi_{{ r}(p)},
\pi
\right)
\]
which requires a trivial calculation:
\[
\de_{\rm TV}\left(
\pi_{{ r}(p)},
\pi
\right)\leq
\sum_{i=1}^I \sum_{\al \in \D} \frac{k(\al)}{N} \al_i^+ \left| p_i -{ r}(p)_i
\right|
\leq
\tfrac{1}{A}\log^{-\nicefrac{8}{3}}N,
\]
referring to Remark \ref{remark_rel_error}.
\end{proof}

\begin{definition}[Nice paths]
A nice path $\mathfrak{p} \in \mathcal{P}^t_{v_0,v_t}$ has:
\begin{itemize}
\item weight $\mathbf{w}(\mathfrak{p})\leq \frac{1}{N \log^2 N}$;
\item first $t-h$ steps in $T_{v_0}$;
\item at the end of this path of $t-h$ steps, the $(t-h)$\textsuperscript{th} vertex is nice \eqref{eq_nice_vertex}.
\end{itemize}
\end{definition}

\begin{proposition}\label{prop_summing_nice_paths}
If we reduce the transition probability to only the sum over nice paths
\[
\left[P_0^t\right]_{x\rightarrow y}
:=
\sum_{\stackrel{\mathfrak{p}\in \mathfrak{P}^t_{x,y}}{\mathfrak{p} \text{ nice}}}
\mathbf{w}(\mathfrak{p})
\]
then we find at $t \sim t_* = \frac{\log N}{\mu}$ and for any $\epsilon>0$
\[
\max_{x,y \in [N]}
N \left[P_0^t\right]_{x\rightarrow y}
\leq 1+\log^{-\nicefrac{1}{4}} N
\]
with probability $1-e^{-\log^{\nicefrac{4}{3}}N}$.
\end{proposition}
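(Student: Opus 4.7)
The plan is to decompose each nice path at time $t - h$. Because the first $t-h$ steps of such a path lie in $T_x$ and the vertex $z := v_{t-h}$ is nice (so $B^+(z, h)$ is a tree), the final $h$ steps trace the unique $z \to y$ path in that tree, if any. Writing $W_x(z)$ for the weight of the $T_x$-path from $x$ to $z$ and $V_z(y)$ for the weight of this unique $h$-step continuation (zero if $y \notin B^+(z,h)$), one obtains
\[
[P_0^t]_{x \to y} = \sum_{z \in T_x,\ \text{nice}} W_x(z)\, V_z(y),
\]
subject to the pointwise cap $W_x(z) V_z(y) \leq 1/(N \log^2 N)$ coming from the weight restriction in the definition of a nice path.

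First I would handle the tree mass. By Proposition \ref{prop_good_alg} the type distribution of the unexplored tails of $T_x$ matches the stationary layer distribution $\pi$ up to total-variation error $1/\log^2 N$, so that feeding the remaining steps into the layer chain gives $\sum_z W_x(z) = 1 - O(1/\log^2 N)$. Together with $\sum_y V_z(y) \leq 1$ this produces the \emph{average} bound $\sum_y [P_0^t]_{x \to y} = 1 - O(1/\log^2 N)$, i.e.\ the conclusion on average over $y$.

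To promote this to a uniform bound in $y$, I would use the randomness of the configuration-model pairing. Conditional on $T_x$ and on previously revealed edges, each unused tail is matched to a uniform unused head, so the chance that a given $y$ appears in $B^+(z, h)$ is $\lesssim (\Delta I)^h / N = O(N^{-9/10})$, and in that case $V_z(y) \leq (p^*)^h$. Viewing $N[P_0^t]_{x \to y}$ as a weighted sum of near-Bernoulli indicators indexed by nice $z$, each summand bounded by $1/(N \log^2 N)$ (niceness) with variance proxy $\sum_z W_x(z)^2 \cdot O(N^{-1})$, a Bernstein-type inequality should deliver deviations of order $\log^{-1/4} N$ with failure probability much smaller than $(N^{2} \log^{3I} N)^{-1} e^{-\log^{4/3} N}$. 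Union-bounding over $(x,y) \in [N]^2$ and over the grid representatives of Section \ref{sec_approximation} then closes the argument uniformly in $p \in \Pab$.

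The main obstacle is that the pairings determining $B^+(z, h)$ for distinct $z$ are \emph{not} independent, since they share the same head-to-tail matching. To get genuine concentration I would reveal the pairing sequentially (tail-by-tail, or ball-by-ball in breadth-first order starting from $y$) and bound the resulting Doob martingale increments using both the niceness weight cap $1/(N \log^2 N)$ and the tree size bound $|T_x| \leq 2N/\log^{8/3} N$ from Proposition \ref{prop_cycle_weight}, which jointly give the per-increment and variance estimates needed for an Azuma or Freedman argument at the required $e^{-\log^{4/3} N}$ scale.
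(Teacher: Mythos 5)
Your outline matches the paper's in spirit (a two--ball decomposition plus concentration for the random matching, with the $\tfrac{1}{N\log^2 N}$ niceness cap supplying the increment bound; the paper invokes Chatterjee's concentration inequality for random permutations where you propose a Doob martingale with Azuma/Freedman, and these are essentially interchangeable here). However, there is a genuine gap in your control of the \emph{mean}. Your concentration step shows that $N[P_0^t]_{x\to y}$ concentrates around its conditional expectation for the \emph{fixed} pair $(x,y)$, so you still must show that this conditional expectation is at most $(1+o(1))/N$ for every fixed $y$. Your ``average over $y$'' bound rests on the row-sum identity $\sum_y V_z(y)\le 1$ and only controls the mean of these conditional expectations over $y$; a particular target $y$ could a priori receive more than its share. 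Your pointwise ingredients do not close this: $\mathbb{P}(y\in B^+(z,h))\lesssim (\Delta I)^h/N\le N^{-\nicefrac{9}{10}}$ combined with $V_z(y)\le (p^*)^h$ gives only $\mathbb{E}[V_z(y)]\lesssim N^{-\nicefrac{9}{10}}(p^*)^h$, and since $p^*$ may be as large as $1-A\log^{-\nicefrac{1}{3}}N$ on $\Pab$ we have $(p^*)^h=N^{-o(1)}$, so this bound exceeds $1/N$ by a polynomial factor.

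The missing ingredient is the backward-ball, column-sum computation. The paper explores the backward $h$-ball of the target $y$, keeps only leaves $b$ with a unique length-$h$ path to $y$ of weight $w(b)$, and writes the nice-path mass as $\sum_i\sum_{a\in\mathcal{E}_x^{(i)}}\sum_{b\in\mathcal{F}_y^{(i)}}\frac{w(a)w(b)}{p_i}\mathbbm{1}_{\omega^{(i)}(a)=b}$. Conditioning on both balls, $\mathbb{P}(\omega^{(i)}(a)=b)\approx 1/\sum_{\al}k(\al)\al_i^+$, Proposition \ref{prop_good_alg} gives $\sum_a w(a)/p_i\le \pi(i)/p_i+\log^{-2}N$, and the fixed-$y$ bound $\le (1+O(\log^{-2}N))/N$ then follows from $\sum_i\sum_b w(b)\le 1$ --- the \emph{column} sums of $P^h$ equal one by double stochasticity. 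This is where the doubly stochastic assumption genuinely enters, and it is the step your sketch omits. With that in place, your martingale concentration (or the paper's appeal to Chatterjee) does finish the proof at the stated probability scale after the union bound over pairs.
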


\begin{proof}



Recall that we write $\mathcal{E}_x$ for the set of unexplored tails (out-edges) at the end of the $T_x$ construction and $\mathcal{E}_x^{(1)}, \dots, \mathcal{E}_x^{(I)}$ for the sets of tails of each type.

Similarly, explore the backwards ball boundary $\mathcal{F}$ of radius $h$ around the target $y$, which by construction has $|\mathcal{F}|\leq N^{\nicefrac{1}{10}}$. Further, reduce this ball to those which have a unique path to $y$ (of length $h$). The boundary leaves of this ball have heads of every type in the sets $\mathcal{F}_y^{(1)}, \dots, \mathcal{F}_y^{(I)}$.

Then, writing $w(.)$ for the weight of the path to either $x$ (for a tail $a$ in $\mathcal{E}$) or $y$ (for a head $b$ in $\mathcal{F}$), we have
\[
\left[P_0^t\right]_{x\rightarrow y}
=
\sum_{i=1}^I \left[P_0^t\right]_{x\rightarrow y}^{(i)},
\quad
\left[P_0^t\right]_{x\rightarrow y}^{(i)}:=
\sum_{a \in \mathcal{E}_x^{(i)}} \sum_{b \in \mathcal{F}_y^{(i)}}
\frac{w(a)w(b)}{p_i}
\mathbbm{1}_{\frac{w(a)w(b)}{p_i}\leq \frac{1}{N \log^2 N}}
\mathbbm{1}_{\omega^{(i)}(a)=b},
\]
where also $\omega^{(i)}$ is the uniform random bijection between heads and tails of layer $i$. Write 
\[
Z^{(i)}
=
Z^{(i)}(\mathcal{E},\mathcal{F})
:=
\mathbb{E}\left(
\left[P_0^t\right]^{(i)}_{x\rightarrow y}
\Big|
\cE^{(i)}, \cF^{(i)}
\right)
\]
 for this function of the random graph conditioned on the two balls and $Z=\sum_{i=1}^I Z^{(i)}$ for the full nice path probability. We show that 
\[
N Z=
N \mathbb{E}\left(\left[P_0^t\right]_{x\rightarrow y}\Big| \mathcal{E},\mathcal{F}\right)\leq 1 + \frac{2}{\log^2 N}
\] 
almost surely, using Propositions \ref{prop_cycle_weight} and \ref{prop_good_alg}:
\[
\begin{split}
\mathbb{E}\left(\left[P_0^t\right]^{(i)}_{x\rightarrow y}\Big| \mathcal{E},\mathcal{F}\right)
&\leq
\frac{p_i}{\sum_{\al \in \D} k(\al) \al_i^+
 -\frac{2 N}{ \log^{\nicefrac{8}{3}} N}
 -N^{\nicefrac{1}{10}}}
\sum_{a \in \mathcal{E}_x^{(i)}}
\frac{w(a)}{p_i}
\sum_{b \in \mathcal{F}_y^{(i)}}
\frac{w(b)}{p_i}\\
&\leq
\left(\frac{p_i}{\sum_{\al \in \D} k(\al) \al_i^+}
+\frac{\frac{2 N}{ \log^{\nicefrac{8}{3}} N}
 +N^{\nicefrac{1}{10}}}{
\left(
\sum_{\al \in \D} k(\al) \al_i^+
\right)^2 
 }
\right)
\left(
\frac{\pi(i)}{p_i}
+\frac{1}{\log^2 N}
\right)
\sum_{b \in \mathcal{F}_y^{(i)}}
\frac{w(b)}{p_i}\\
&\leq
\left(\frac{p_i}{\sum_{\al \in \D} k(\al) \al_i^+}
+\frac{2 }{ N \log^{\nicefrac{8}{3}} N}
\right)
\left(
\frac{1}{N}
\sum_{\al \in \D} k(\al) \al_i^+
+\frac{1}{\log^2 N}
\right)
\sum_{b \in \mathcal{F}_y^{(i)}}
\frac{w(b)}{p_i}\\
&\leq
\left(
1+\frac{2\Delta}{A \log^{\nicefrac{8}{3}} N}
\right)
\left(
1+\frac{1}{\log^2 N}
\right)
\frac{1}{N}
\sum_{b \in \mathcal{F}_y^{(i)}}
w(b)
\end{split}
\]
and the conclusion follows because $\sum_{i=1}^I \sum_{b \in \mathcal{F}_y^{(i)}}
w(b) \leq 1$. 
Then
\[
\mathbb{P}\left(
Z-\mathbb{E}\left( Z 
\big|
\mathcal{E},\mathcal{F} \right) \geq \frac{\epsilon }{N}
\right)
\leq
\sum_{i=1}^I
\mathbb{P}\left(
Z^{(i)}-\mathbb{E}\left( Z^{(i)} 
\big|
\mathcal{E}^{(i)},\mathcal{F}^{(i)} \right) \geq \frac{\epsilon }{NI}
\right)
\]
so that for each bijection we can apply \cite[Proposition 1.1]{chatterjee07}
\[
\begin{split}
\mathbb{P}\left(
Z^{(i)}-\mathbb{E}\left( Z^{(i)} 
\big|
\mathcal{E}^{(i)},\mathcal{F}^{(i)} \right) \geq \frac{\epsilon }{NI}
\right)
&\leq
\exp\left(
-\frac{\nicefrac{\epsilon^2}{(NI)^2} }{\frac{2}{N \log^2 N} (2 \mathbb{E}\left( Z^{(i)} 
\big|
\mathcal{E}^{(i)},\mathcal{F}^{(i)} \right)+\nicefrac{\epsilon}{NI})}
\right)\\
&\leq
\exp\left(
-\frac{\epsilon^2 \log^2 N}{2I^2 (2+3\epsilon) }
\right)
\end{split}
\]
and the conclusion follows from the union bound over $N(N-1)$ directed pairs, setting $\epsilon=\log^{-\nicefrac{1}{4}} N-2\log^{-2} N$, the second term to offset the error in the mean.
\end{proof}

Of the three conditions defining a nice path, the first is the most important, which we make precise in the following proposition. Recall here that $A$ is the adjacency matrix of the random graph.

\begin{proposition}\label{prop_path_not_nice}
Whenever $t \sim t_*= \frac{\log N}{\mu}$,
\[
\sup_{p \in \Pab}
\max_{v \in V}\left(
\mathbb{P}_v\left(
t \text{ \rm step walk is not nice}
\Big| A
\right)
-
Q_{v,t}\left(
\frac{1}{N\log^2 N}
\right)
\right)
\]
\[
\lesssim
\frac{10^5 \Delta^2  \sqrt{\log \Delta I}}{A^{\nicefrac{3}{2}}}
+\frac{2 \Delta \left( \log \log N \right)^2 \sqrt{\log \Delta I}}{ \log^{\nicefrac{1}{3}} N}
\]
with probability at least $1-3e^{-\sqrt[3]{\log N}}$.
\end{proposition}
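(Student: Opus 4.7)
The plan is to decompose ``the $t$-step walk is not nice'' via a union bound into the three failure modes for the three conditions in the definition of a nice path, and argue that only the weight failure contributes the main term $Q_{v,t}(1/(N \log^2 N))$ while the other two are absorbed into the claimed error. Write $W_k$ for the weight of the walker's path after $k$ steps and set
\[
E = \{W_t > \tfrac{1}{N \log^2 N}\}, \qquad F = \{\text{first $t-h$ steps leave $T_v$}\}, \qquad G = \{\text{$(t-h)$-th vertex not in }V\};
\]
then $\mathbb{P}_v(E \mid A) = Q_{v,t}(1/(N \log^2 N))$ directly from Definition \ref{def_Q}, so the task reduces to bounding $\mathbb{P}_v(F \mid A) + \mathbb{P}_v(G \mid A)$ by the claimed error.

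The event $G$ is immediate from Lemma \ref{lemma_rerooting}: $\mathbb{P}_v(G \mid A) \le (p^*)^h$ on its high-probability event, and since $p_* \ge A/\log^{1/3} N$ and $h$ is of order $\log N/\log(\Delta I)$, this is $\exp(-\Omega(\log^{2/3} N))$, dwarfed by either error term in the statement. For $F$, the key structural observation is that Stage 3 of the tree construction explores vertices in strictly decreasing order of weight, so every vertex of $T_v$ with weight exceeding $\tilde M := \log^{4 + 4\Delta \log(\Delta I)} N/(p_* N)$ is fully explored together with its whole ancestor chain. Consequently, any walker path that leaves $T_v$ in its first $t-h$ steps must either traverse an unexplored tail (in which case some $W_k \le \tilde M$ for $k \le t-h$, hence $W_{t-h} \le \tilde M$ by the monotonicity of the weights) or traverse an omitted cycle edge, whose total contribution is bounded by the cycle weight $C_v \lesssim 2/\log N$ from Proposition \ref{prop_cycle_weight}. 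Combining these,
\[
\mathbb{P}_v(F \mid A) \le \bigl(1 - Q_{v, t-h}(\tilde M)\bigr) + C_v.
\]

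The remaining work is to estimate $1 - Q_{v, t-h}(\tilde M)$, for which Proposition \ref{prop_network_path_weights} applies at threshold $\tilde M$ and time $t-h$ to approximate it by $\Phi(\lambda_{\tilde M})$ up to the Proposition's error term, where $\lambda_{\tilde M} = (\mu_p(t-h) + \log \tilde M)/(\sigma_p \sqrt{t-h})$. Using $t \sim t_p = \log N/\mu_p$ and $\log \tilde M = -\log N + O(\log \log N)$, the numerator reduces to $\approx -\mu_p h$, and combining with the standard lower bounds on $\mu_p$ and $\sigma_p$ available in the paper, $\lambda_{\tilde M}$ is negative of order $\sqrt{\log N}$; hence $\Phi(\lambda_{\tilde M})$ is polynomially small in $N$ and the Proposition's error term is the dominant contribution to $\mathbb{P}_v(F \mid A)$. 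The main obstacle is the structural inclusion $F \subseteq \{W_{t-h} \le \tilde M\} \cup \{\text{omitted cycle edge used}\}$, which requires carefully invoking the Stage 3 ordering; once this is in place, the result follows from the union bound, with the error probability $3e^{-\sqrt[3]{\log N}}$ obtained by summing the failure probabilities of Lemma \ref{lemma_rerooting}, Proposition \ref{prop_cycle_weight}, and Proposition \ref{prop_network_path_weights}.
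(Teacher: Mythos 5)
Your decomposition into the three niceness conditions, the identification of the weight failure with $Q_{v,t}(1/(N\log^2 N))$, the use of Lemma \ref{lemma_rerooting} for the $(t-h)$-th vertex, Proposition \ref{prop_cycle_weight} for omitted cycle edges, and the reduction of tree-exit to $1-Q_{v,t-h}(\tilde M)$ controlled via Proposition \ref{prop_network_path_weights} is exactly the paper's argument. One quantitative caveat: for general $p\in\Pab$ the mean $\mu_p$ can be as small as order $A\log^{-1/3}N\log\log N$, so $-\lambda_{\tilde M}$ is only of order $(A\log\log N)^{3/2}$ rather than $\sqrt{\log N}$, and $\Phi(\lambda_{\tilde M})$ is superpolylogarithmically (not polynomially in $N$) small --- which still suffices to absorb it into the stated error.
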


\begin{proof}
In Proposition \ref{prop_cycle_weight} we found that (for every $p \in \Pab$ simultaneously) the probability to form a non-nice path from $v$ is uniformly $o(1)$ over the nice vertex roots with probability $1-e^{-\log^2 N}$.

Further we must control that 
the $(t-h)$\textsuperscript{th} vertex is nice. By Lemma \ref{lemma_rerooting}, this fails with probability 
$
(p^*)^{- h}
$
outside of a polynomially unlikely graph event. Using just that $p_*\geq \log^{-\nicefrac{1}{3}}N$, we have
\[
(p^*)^{- h}\leq\left(
1-\log^{-\nicefrac{1}{3}}N
\right)^h
=e^{-\Theta(\log^{\nicefrac{2}{3}}N)}.
\]

The other way that we can escape the tree $T_v$ is by falling to a low weight before time $t-h$. This occurs with exactly probability
\[
1-Q_{v,t-h}\left(
\frac{\log^{3+4\Delta \log (\Delta I)} N}{p_* N}
\right)
\]
which fortunately with $\varphi=\nicefrac{\log^{3+4\Delta \log (\Delta I)} N}{p_* N}$ we have already controlled uniformly in Proposition \ref{prop_network_path_weights}, with probability $1-2e^{-\sqrt[3]{\log N}}$. It remains then to show that
\begin{equation}\label{eq_unlikely_low_weight}
\frac{-\mu (t-h) - \log \varphi}{\sigma_p \sqrt{t-h}}
\rightarrow  \infty
\end{equation}
for which
\[
\frac{-\mu (t-h) - \log \varphi}{\sigma_p \sqrt{t-h}}
\sim
\frac{\mu h}{\sigma_p \sqrt{t-h}}
\geq
\frac{\mu^{\nicefrac{3}{2}} h}{\operatorname{Var}_\pi(f) \sqrt{\log N}}
\sim
\frac{\mu^{\nicefrac{3}{2}} \sqrt{\log N}}{\operatorname{Var}_\pi(f) 10 \log \Delta I}.
\]

Note finally that by Lemma \ref{prop_max_square_entropy}
\[
\operatorname{Var}_\pi(f)\leq
\mathbb{E}_\pi\left(
\log^2 p_i
\right)
\leq
\Delta
\sum_i p_i \log^2 p_i
\leq 4\Delta + \Delta \log^2 I
\]
and
\[
\mu^{\nicefrac{3}{2}}\sqrt{\log N} 
\geq 
\left( -p_* \log p_* \right)^{\nicefrac{3}{2}}\sqrt{\log N} 
\gtrsim
\left(
\tfrac{A}{3}  \log \log N
\right)^{\nicefrac{3}{2}}
\]
so we have \eqref{eq_unlikely_low_weight} for all $p \in \Pab$.
\end{proof}

Now we are ready to end the section with the proof of our second main theorem.

\begin{theorem}\label{thm_simultaneous_cutoff}
We define
\[
w_p=\sigma_p \sqrt{\frac{\log N}{\mu_p^3}},
\qquad
t_p=\frac{\log N}{\mu_p},
\]
and take $1 \leq A \leq \theta \log^{\nicefrac{1}{3}}N$ for a sufficiently small constant $\theta$. Then, with probability $1-7e^{-\sqrt[3]{\log N}}$ we find
\[
\sup_{p \in \Pab}
\sup_{\lambda \in \mathbb{R}}
\left|\mathcal{D}(t_p+\lambda w_p)-1+\Phi(\lambda)\right|
\lesssim
\frac{2\cdot 10^5 \Delta^2  \sqrt{\log \Delta I}}{A^{\nicefrac{3}{2}}}
+\frac{4 \Delta \left( \log \log N \right)^2 \sqrt{\log \Delta I}}{ \log^{\nicefrac{1}{3}} N}.
\]
\end{theorem}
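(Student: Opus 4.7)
The plan is to sandwich $\mathcal{D}_v(t)$ between $1-\Phi(\lambda)\pm(\text{admissible error})$ via the nice-path decomposition, applying Proposition~\ref{prop_network_path_weights} once in each direction; the sup over $p\in\Pab$ and $v\in[N]$ will be handled via the uniformity already built into the cited propositions, and the sup over $\lambda\in\mathbb{R}$ by monotonicity of $t\mapsto\mathcal{D}(t)$.

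For the upper bound I would split $P^t=P_0^t+P_{\rm bad}^t$ where $P_0^t$ retains only nice paths. Proposition~\ref{prop_summing_nice_paths} gives $NP_0^t(v,y)\leq 1+\log^{-\nicefrac{1}{4}}N$, so $\sum_y(P_0^t(v,y)-\nicefrac{1}{N})_+\leq\log^{-\nicefrac{1}{4}}N$; the inequality $(a+b-c)_+\leq(a-c)_++b$ then gives $\mathcal{D}_v(t)\leq\log^{-\nicefrac{1}{4}}N+\mathbb{P}_v(\text{walk not nice})$. Proposition~\ref{prop_path_not_nice} bounds $\mathbb{P}_v(\text{not nice})\leq Q_{v,t}(1/(N\log^2N))+(\text{error})$, and Proposition~\ref{prop_network_path_weights} further bounds $Q_{v,t}(\varphi)\leq\Phi(-\lambda_\varphi)+(\text{error})$. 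A short computation using $\mu_pt_p=\log N$ and $\mu_pw_p=\sigma_p\sqrt{\log N/\mu_p}$ yields $\lambda_\varphi=\lambda-2\log\log N/(\sigma_p\sqrt{t})$, and the deviation is of order $\log\log N\sqrt{\log\Delta I/\log N}$, smaller than the stated error, so Lipschitz continuity of $\Phi$ closes the upper bound.

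For the lower bound the key observation is that if the walker's own path has weight greater than $\varphi_-$, then its endpoint $y$ satisfies $P^t(v,y)\geq\varphi_-$. Choosing the softer threshold $\varphi_-=\log N/N$ and letting $S=\{y:P^t(v,y)>\varphi_-\}$, we have $|S|\leq 1/\varphi_-$ and hence $\pi(S)\leq 1/\log N$, which gives
\[
\mathcal{D}_v(t)\geq P^t(v,S)-\pi(S)\geq Q_{v,t}(\varphi_-)-\tfrac{1}{\log N}.
\]
Proposition~\ref{prop_network_path_weights} then provides $Q_{v,t}(\varphi_-)\geq\Phi(-\lambda_{\varphi_-})-(\text{error})$ with $\lambda_{\varphi_-}=\lambda+O(\log\log N\sqrt{\log\Delta I/\log N})$, matching the upper bound.

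Finally I take the suprema. The three cited propositions are already uniform over $p\in\Pab$ through the grid representatives of Definition~\ref{def_T_construction}. The above handles $v\in V$ directly; for $v\notin V$, a short burn-in of $k=O(\log\log N/p_*)$ steps suffices, since Lemma~\ref{lemma_rerooting} gives $(p^*)^k=o(1)$ while the induced time-shift $k/w_p=O(\log\log N/(A\log^{\nicefrac{1}{6}}N))$ is absorbed into the error because $w_p\gtrsim\sqrt{\log N}$. Uniformity in $\lambda\in\mathbb{R}$ follows from monotonicity of $\mathcal{D}$: outside a window where $1-\Phi(|\lambda|)$ already lies inside the claimed error, the bound is trivial in each direction. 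Summing the failure probabilities of the cited propositions and Lemma~\ref{lemma_rerooting} produces the stated $7e^{-\sqrt[3]{\log N}}$. The most delicate point will be the uniform-in-$p$ control of $|\lambda_\varphi-\lambda|$ at the two thresholds, which relies on lower bounds for $\sigma_p$ and upper bounds for $\mu_p$ keeping $\sigma_p\sqrt{\log N/\mu_p}\gtrsim\sqrt{\log N/\log\Delta I}$ throughout $\Pab$.
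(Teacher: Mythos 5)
Your proposal follows essentially the same route as the paper's proof: the upper bound via the nice-path reduction (Propositions \ref{prop_summing_nice_paths} and \ref{prop_path_not_nice}) fed into Proposition \ref{prop_network_path_weights}, the lower bound via a path-weight versus total-variation inequality, uniformity in $p$ through the grid representatives, and uniformity in $\lambda$ by monotonicity of $\mathcal{D}$. The only cosmetic differences are that you derive the lower-bound inequality $Q_{v,t}(\varphi_-)\leq\mathcal{D}_v(t)+o(1)$ by an elementary counting argument where the paper invokes the Cauchy--Schwarz bound of \cite[Equation 13]{bordenave18}, and your burn-in length differs from the paper's $s=\lfloor\sqrt{\log N}\rfloor$ (note only that your justification $w_p\gtrsim\sqrt{\log N}$ should read $w_p\gtrsim\sqrt{A}\log^{\nicefrac{1}{3}}N$, via Lemma \ref{prop_min_general_variance}); neither changes the structure of the argument.
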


\begin{proof}
We prove the result for $|\lambda|^{13}\leq \log N$, and using the monotonicity (for $x>0$)
\[
\begin{split}
\mathcal{D}(x+t_p+ w_p\log^{\nicefrac{1}{13}}N)&<\mathcal{D}(t_p+ w_p\log^{\nicefrac{1}{13}}N),\\
\mathcal{D}(-x+t_p- w_p\log^{\nicefrac{1}{13}}N)&>\mathcal{D}(t_p- w_p\log^{\nicefrac{1}{13}}N),
\end{split}
\]
 it is automatically true outside of that interval with an $e^{-\Omega\left(\log^{\nicefrac{2}{13}}N\right)}$ error.

Recall from Proposition \ref{prop_network_path_weights} that we have (with probability at least $1-2e^{-\sqrt[3]{\log N}}$)
\[
\left|
Q_{i,t}\left(
\varphi
\right)
-\Phi\left(
-\lambda_{\varphi}
\right)
\right|
\lesssim
\frac{10^5 \Delta^2  \sqrt{\log \Delta I}}{A^{\nicefrac{3}{2}}}
+\frac{2 \Delta \left( \log \log N \right)^2 \sqrt{\log \Delta I}}{ \log^{\nicefrac{1}{3}} N}
\]
for the Gaussian quantile
\[
\lambda_{\varphi}=
\frac{\mu t + \log \varphi}{\sigma_p \sqrt{t}}
\]
which we will use to determine the distance from stationarity. We have parameters
\begin{equation}\label{eq_phi_lambda}
t=t_p+\lambda w_p,
\quad
\left|
\log \varphi + \log N 
\right|\leq
2\log \log N.
\end{equation}

By Lemma \ref{prop_min_general_variance} for $p \in \Pab$ we have $\sigma_p \geq \sqrt{A} \log^{-\nicefrac{1}{6}}N$, and so
\[
\begin{split}
\left|
\lambda-\lambda_\varphi
\right|
&=
\frac{\left|-\lambda \mu w_p - \log N - \log \varphi+\lambda\sigma_p \sqrt{t_p+\lambda w_p}\right|}
{\sigma_p \sqrt{t_p+\lambda w_p}}\\
&\leq
\frac{2 \log \log N}
{\sigma_p \sqrt{t_p+\lambda w_p}}
+
\lambda
\frac{\left|- \sigma_p \sqrt{t_p}+\sigma_p \sqrt{t_p+\lambda w_p}\right|}
{\sigma_p \sqrt{t_p+\lambda w_p}}\\
&\lesssim
\lambda
\frac{\left|- \sigma_p \sqrt{t_p}+\sigma_p \sqrt{t_p+\lambda w_p}\right|}
{\sigma_p \sqrt{t_p+\lambda w_p}}\\
&\lesssim
\frac{\lambda^2}{2}\frac{w_p}{t_p}=O\left(\frac{\lambda^2}{\sqrt{\log N}}\right)=O\left(\frac{1}{\log^{\tfrac{1}{3}+\tfrac{1}{78}} N}\right)\\
\end{split}
\]
where the final order follows from Lemma \ref{prop_max_variance} and Propositions \ref{prop_positive_p} and then recalling $|\lambda|^{13}\leq \log N$.
%


The above calculation was insensitive to $\varphi$ in the given window \eqref{eq_phi_lambda} so we can set
\[\varphi_{\rm upper}=\frac{\log^2 N}{N}, \qquad \varphi_{\rm lower}=\frac{1}{N\log^2 N}.\]

We now check that $\mathcal{D}_{v}(t) \approx Q_{v,t}(\varphi)$ in both directions. 
For the lower bound by \cite[Equation 13]{bordenave18}
\[
Q_{v,t}(\varphi_{\rm upper})\leq \mathcal{D}_{v}(t)
+
\sqrt{\frac{1}{\varphi_{\rm upper}}\sum_{ v \in [N]}\frac{1}{N^2}}
=
\mathcal{D}_{v}(t)
+\frac{1}{\sqrt{ N \varphi_{\rm upper}}}.
\]

For the upper bound, using that there is at most one nice path from source to sink, as well as Proposition \ref{prop_summing_nice_paths}
\[
\begin{split}
\max_{v \in V}
\mathcal{D}_{v}(t)&=
\max_{v \in V}
\sum_w \left(
\frac{1}{N}-\left[P^t\right]_{v\rightarrow w}
\right)^+\\
&\leq
\max_{v \in V}
\sum_w \left(
\frac{1+\log^{-\nicefrac{1}{4}} N}{N}-\left[P_0^t\right]_{v\rightarrow w}
\right)^+\\
&\leq
\log^{-\nicefrac{1}{4}} N+
\max_{v \in V}
\mathbb{P}_v\left(
t\text{ step path not nice}
\right)
\end{split}
\]
with probability $1-e^{-\log^{\nicefrac{4}{3}}N}$.


We conclude for $s = \left\lfloor\sqrt{\log N}\right\rfloor$ using Proposition \ref{prop_path_not_nice} that with probability $1-4e^{-\sqrt[3]{\log N}}$
\[
\begin{split}
\max_{v \in [N]}
\mathcal{D}_{v}(t)
&\leq
\max_{v \in V}
\mathcal{D}_{v}(t-s)
+(p^*)^{s}\\
&\lesssim
Q_{v,t}\left(
\varphi_{\rm lower}
\right)
+\frac{10^5 \Delta^2  \sqrt{\log \Delta I}}{A^{\nicefrac{3}{2}}}
+\frac{2 \Delta \left( \log \log N \right)^2 \sqrt{\log \Delta I}}{ \log^{\nicefrac{1}{3}} N}
+(p^*)^{s}
\end{split}
\]
and the final term has smaller order as $p^*\leq 1-\log^{-\nicefrac{1}{3}}N$.
\end{proof}

\section{Optimisation}\label{sec_optimisation}

In this section we want to compare the full space of possible probability vectors $\sP$, which contains of course some arbitrarily small probabilities violating our mixing requirements. $N$ is still taken sufficiently large in all statements.

We note first that at least the optimiser cannot be close to the boundary.

\begin{proposition}\label{prop_positive_p}
If there is some strictly positive vector in $\mathscr{P}$, then we find
\[
\min_{i \in [I]}
\argmax_{p \in \mathscr{P}}
\mu_p\geq\frac{1}{(\Delta I)^{\Delta^2}}
\]
\end{proposition}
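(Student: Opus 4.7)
The plan is to exploit the strict concavity of $\mu_p = -\sum_i c_i p_i \log p_i$ (where $c_i := \sum_{\al\in\D} \nicefrac{k(\al)\al_i^+}{N} \in [1,\Delta]$) together with the first-order optimality condition at the unique maximiser $p_\bigstar$, combined with a coordinate lower bound on the strictly positive $q \in \sP$ granted by hypothesis.

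First I would show that $p_\bigstar$ has every coordinate strictly positive. Indeed, if $(p_\bigstar)_i = 0$ for some $i$, then perturbing along the direction $q - p_\bigstar$ (for any $q\in\sP$ with $q_i > 0$) changes $\mu$ at coordinate $i$ by $-c_i \epsilon q_i \log(\epsilon q_i)$, whose derivative in $\epsilon$ diverges to $+\infty$ as $\epsilon \to 0^+$ and dominates the bounded contributions from the other coordinates, contradicting maximality of $p_\bigstar$.

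With $p_\bigstar$ now in the relative interior, the first-order condition along $q - p_\bigstar$ for any $q\in \sP$ reads $\sum_i c_i(q_i - (p_\bigstar)_i)(\log (p_\bigstar)_i + 1) \geq 0$. A direct computation from the $\sP$-defining equalities yields $\sum_i c_i q_i = 1$ for every $q \in \sP$, so the constant terms cancel and I am left with the key inequality
\[
\sum_i c_i q_i \log \frac{1}{(p_\bigstar)_i} \leq \mu_{p_\bigstar}.
\]
Interpreting $\pi(i) = c_i (p_\bigstar)_i$ as a probability distribution on $[I]$, the right side rewrites as $H(\pi) + \mathbb{E}_\pi[\log c_i] \leq \log(\Delta I)$. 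Isolating the term for $i^* := \argmin_i (p_\bigstar)_i$ on the left (every term is nonnegative since $p_\bigstar \leq 1$) gives
\[
(p_\bigstar)_{i^*} \geq (\Delta I)^{-1/(c_{i^*} q_{i^*})},
\]
so it suffices to exhibit $q \in \sP$ with $q_{i^*} \gtrsim 1/\Delta^2$.

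To produce such a $q$ I would exploit the rational polytopal structure of $\sP$: the defining equalities have integer coefficients in $\{1,\dots,\Delta\}$, so by Cramer's rule each vertex coordinate is either zero or the reciprocal of an integer bounded (via Hadamard) in terms of $\Delta$ and $I$. The hypothesis ensures that for every index $i$ some vertex of $\sP$ is positive at $i$, so averaging finitely many such vertices yields a strictly positive $q\in\sP$ with an explicit coordinate lower bound, which combined with the KKT inequality above delivers the stated estimate. The hardest step will be calibrating this construction to match the precise exponent $\Delta^2$ in the statement: a naive Hadamard-plus-centroid estimate only gives $q_i \gtrsim (\Delta I)^{-I}$ and hence a doubly-exponential bound on $(p_\bigstar)_{i^*}$, so achieving the claim likely requires using that each column of the constraint matrix takes at most $\Delta$ distinct values (bounding effective vertex complexity by $\Delta^2$ rather than $I$), or an LP-duality argument directly on the max-min problem $\max_{p\in\sP}\min_i p_i$.
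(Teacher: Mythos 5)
Your opening steps are sound and run parallel to the paper's: the positivity of the optimiser via the infinite derivative of $-p\log p$ at $0^+$ is exactly the paper's argument, and your variational reformulation (the first-order condition along $q-p_\bigstar$, the identity $\sum_i c_i q_i=1$ for all $q\in\sP$, and the bound $\mu_{p_\bigstar}\leq\log(\Delta I)$, which is Lemma \ref{prop_max_entropy}) is correct and gives the valid inequality $c_{i^*}q_{i^*}\log\frac{1}{(p_\bigstar)_{i^*}}\leq\log(\Delta I)$. The problem is the step you yourself flag as hardest: you never produce a $q\in\sP$ with $q_{i^*}\gtrsim 1/\Delta^2$, and in fact no such $q$ need exist. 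The hypothesis only guarantees \emph{some} strictly positive vector in $\sP$; the polytope can be a single point, or can be pinned so that \emph{every} $q\in\sP$ has $q_{i^*}$ far smaller than $1/\Delta^2$ (e.g.\ of order $(\Delta I)^{-I}$, as your own Cramer/Hadamard estimate suggests). In that regime the primal first-order condition in the direction $q-p_\bigstar$ simply carries too little information about the $i^*$ coordinate, and your bound degrades to a doubly exponential one. So the exponent $\Delta^2$ is not reachable by this route as written.

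The paper gets $\Delta^2$ from the \emph{dual} side instead: stationarity of the Lagrangian forces $\log\frac{1}{p_i}$ to equal the fixed linear combination $\frac{1}{e m_i}\bigl(\sum_{\al}\lambda_\al^+\al_i^+ + \sum_{\al}\lambda_\al^-\al_i^-\bigr)$, and since every ratio $\al_i^{\pm}/m_i$ lies in $[\Delta^{-1},\Delta]$, all the exponents $\log\frac{1}{p_i}$ are comparable up to a factor $\Delta^2$, independently of $I$ and of the geometry of $\sP$. A single normalisation constraint $\sum_i p_i\al_i^+=1$ then forces $\max_i p_i\geq(\Delta I)^{-1}$, and the comparability of exponents converts this into $\min_i p_i\geq(\Delta I)^{-\Delta^2}$. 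That comparability is precisely the quantitative input your primal argument is missing; to rescue your approach you would need to pass to the multipliers anyway, at which point you have reproduced the paper's proof.
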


\begin{proof}
Define
\[
{f}(p)=\begin{cases}
p \log \frac{1}{p} & p>0\\
0 & p \leq 0
\end{cases}
\]
We find the stationary points of Lagrangian
\[
\mathcal{L}=\sum_{i=1}^I\sum_{\al \in \D} \frac{k(\al)}{N} \al_i^+ {f}(p_i)
+\sum_{\al \in \D}
\lambda_\alpha^+ 
\left(\sum_{i=1}^I
1-p_i\alpha_i^+
\right)
+\sum_{\al \in \D}
\lambda_\alpha^-
\left(\sum_{i=1}^I
1-p_i\alpha_i^-
\right)
\]
leading for each $i$ to either $p_i\leq 0$ or
\[
p_i=\exp\left(
-\sum_{\al \in \D}
\lambda_\alpha^+ 
\frac{\alpha_i^+}{e m_i}
-\sum_{\al \in \D}
\lambda_\alpha^-
\frac{\alpha_i^-}{e m_i}
\right)
\]
where $m_i$ is the mean degree of layer $i$.

$\mu$ is concave on $\mathscr{P} \ni p$ where $p_i>0$, and further
\[
(\nabla \mu)_i\Big|_{p_i=0^+}=\infty
\]
so the directional derivative from a point on an axis in the positive direction of must be positive: the optimiser $p$ must fall in the strictly positive orthant.

Then observe that with $eC:=\sum_{\al \in \D}\lambda_\alpha^++\sum_{\al \in \D}\lambda_\alpha^-$, for every $i\in [I]$ we have
\[
e^{-C\Delta} \leq p_i \leq e^{-\nicefrac{C}{\Delta}}
\]
but also for some $\al \in \D$
\[
1=\sum_{i=1}^I p_i \alpha^+_i\leq \Delta I e^{-\nicefrac{C}{\Delta}}
\implies
e^{-C\Delta}\geq\frac{1}{(\Delta I)^{\Delta^2}}.
\]
\end{proof}

The space $\sP$ also might contain vectors close to constant which would break the Gaussian cutoff picture, but again (because we exclude the regular graph) this will not happen at the optimiser.

\begin{corollary}\label{cor_min_variance}
When $p$ is the optimiser of $\mu$ in $\mathscr{P}$ which contains a strictly positive vector, and the degrees defining $\mathscr{P}$ satisfy Assumption \ref{ass_different_outdegree}, we find
\[
\mathbb{V}{\rm ar}_\pi \log p_i \geq \frac{1}{(\Delta I)^{\Delta^2}}.
\]
\end{corollary}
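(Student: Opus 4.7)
My plan has three steps, with the substantive work occurring in the middle one. Proposition \ref{prop_positive_p} already gives that the optimiser satisfies $p_i \geq (\Delta I)^{-\Delta^2}$ for every $i$, which will be used to lower bound both $p_\max$ and the stationary weights. The remaining task is to show that $p$ cannot be too close to constant, which is where Assumption \ref{ass_different_outdegree} enters; it rules out $p$ being exactly constant (a constant $p = c$ would force $c\sum_i \alpha_i^+ = c\sum_i \beta_i^+ = 1$, contradicting different totals), but I need the quantitative version.

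For the spread estimate, I would pick $\alpha, \beta \in \D$ from Assumption \ref{ass_different_outdegree} with, WLOG, $S := \sum_i(\al_i^+ - \beta_i^+) \geq 1$, using that total degrees are positive integers so any gap is at least unit. Subtracting $\sum_i p_i \al_i^+ = \sum_i p_i \beta_i^+ = 1$ and inserting $p_\max$ yields
\[
p_\max \leq p_\max S = \sum_i (p_\max - p_i)(\al_i^+ - \beta_i^+) \leq \Delta \sum_i (p_\max - p_i),
\]
so some $j \in [I]$ satisfies $p_\max - p_j \geq p_\max/(\Delta I) \geq (\Delta I)^{-\Delta^2 - 1}$. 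If Assumption \ref{ass_different_outdegree} supplies only indegree differences, the same calculation works with $\al^+, \beta^+$ replaced by $\al^-, \beta^-$.

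To translate this spread into a variance of $\log p$, I would use the elementary bound $|\log a - \log b| \geq |a - b|$ for $a, b \in (0, 1]$ (immediate from $\tfrac{d}{dt}\log t = 1/t \geq 1$ on that range) to deduce $|\log p_{i^*} - \log p_j| \geq (\Delta I)^{-\Delta^2 - 1}$, where $i^*$ attains the max. Retaining only the $(i^*, j)$ pair from
\[
\mathbb{V}{\rm ar}_\pi \log p_i = \tfrac{1}{2}\sum_{k,l} \pi(k)\pi(l)(\log p_k - \log p_l)^2,
\]
and invoking $\pi(k) \geq p_k \geq (\Delta I)^{-\Delta^2}$ from \eqref{eq_pi_lower}, yields a lower bound of the form $(\Delta I)^{-c\Delta^2}$ for a small absolute constant $c$, matching the stated order (absorbing the constant into the exponent).

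The main obstacle is entirely conceptual rather than technical: producing a \emph{quantitative} spread from the qualitative ``$p$ cannot be constant''. The key that unlocks this is the integer gap $S \geq 1$ in the differing total degrees, which prevents the spread from being made arbitrarily small in any sequence of admissible networks.
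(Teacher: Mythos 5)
Your argument is structurally sound and the key quantitative step is genuinely different from the paper's. Both proofs share the same skeleton: retain a single pair $(i^*,j)$ in the double-sum formula for the variance, use $\pi(k)\geq p_k$ and Proposition \ref{prop_positive_p} to bound the weights, and invoke Assumption \ref{ass_different_outdegree} to force a spread. Where you diverge is in how the spread is extracted. The paper deduces from $\sum_i p_i\al_i^\cdot=\sum_i p_i\beta_i^\cdot=1$ the absolute bounds $\min p\leq 1/\sum\al^\cdot$ and $\max p\geq 1/\sum\beta^\cdot$ and then minimises the resulting function $x\mapsto \tfrac{x}{\sum\beta^\cdot}\log^2(x\sum\beta^\cdot)$ over the admissible range of $\min p$; you instead get an \emph{additive} gap directly from the identity $p_{\max}S=\sum_i(p_{\max}-p_i)(\al_i^+-\beta_i^+)$, which is a clean and correct observation (the integer gap $S\geq1$ is indeed exactly the right lever, and the fallback to indegrees is fine).

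The one defect is the constant, and you cannot wave it away as you do. The corollary asserts the specific bound $(\Delta I)^{-\Delta^2}$, so "absorbing the constant into the exponent" is not available: the exponent is part of the claim. As written, your chain gives at best $\pi(i^*)\pi(j)\bigl(p_{\max}-p_j\bigr)^2\geq (\Delta I)^{-\Delta^2}\cdot(\Delta I)^{-\Delta^2}\cdot(\Delta I)^{-2\Delta^2-2}$, i.e.\ roughly $(\Delta I)^{-4\Delta^2-2}$. You pay the factor $(\Delta I)^{-\Delta^2}$ several times: once for each stationary weight and twice more from squaring the additive gap, whereas the paper pays it only once (the index achieving the max has $\pi(j)\geq 1/\sum\beta^\cdot\geq 1/(\Delta I)$, and the squared \emph{log-ratio} $\log^2(p_i\sum\beta^\cdot)$ is actually \emph{large}, of order $\Delta^4\log^2(\Delta I)$, precisely when $p_i$ is near its lower bound, compensating for the single small factor $p_i$). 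Your weaker constant would in fact suffice for every downstream use of the corollary in this paper (it is only ever used as a positive constant independent of $N$), but it does not prove the statement as written; to recover the stated constant you would need to use $p_{\max}\geq 1/(\Delta I)$ rather than $(\Delta I)^{-\Delta^2}$ and replace the crude bound $|\log a-\log b|\geq|a-b|$ by a ratio estimate that exploits the smallness of $p_j$, which is essentially what the paper's extremal analysis does.
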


\begin{proof}
By the triangle inequality for any $X_1,X_2 \stackrel{\rm i.i.d.}{\sim}X$
\[
\mathbb{E}\left((X_1-X_2)^2\right)\leq 2 \mathbb{V}{\rm ar}(X),
\]
and therefore for any $i,j \in [I]$ we can let $X$ have the distribution $\pi$ to see
\[
\begin{split}
\mathbb{V}{\rm ar}_\pi \log p_i
&\geq 
\frac{1}{2}
\mathbb{E}\left((\log p_{X_1}-\log p_{X_2})^2;\left\{\{X_1,X_2\}=\{i,j\}\right\}\right)\\
&=
\pi(i)\pi(j)\left(\log p_i - \log p_j\right)^2\\
&\geq p_i p_j \log^2 \frac{p_i}{p_j}.
\end{split}
\]

Then, by Assumption \ref{ass_different_outdegree} there exist $\al, \beta \in \D$ and $\cdot \in \{+,-\}$ with $\sum \al^\cdot - \sum \beta^\cdot\geq 1$, and for these vectors:
\[
\al \cdot p =1 \implies \min p \leq \frac{1}{\sum \al^\cdot};
\]
\[
\beta \cdot p =1 \implies \max p \geq \frac{1}{\sum \beta^\cdot}.
\]

Setting $j=\argmax p$ then, and using that this lower bound is increasing in $p_j>p_i$,
\[
\mathbb{V}{\rm ar}_\pi \log p_i 
\geq
\frac{p_i}{\sum \beta^\cdot} \log^2 \left( p_i \sum \beta^\cdot \right).
\]

We use Proposition \ref{prop_positive_p} and that over $p_i<\sum \beta^\cdot$ this lower bound has a unique local maximum
\[
\begin{split}
\mathbb{V}{\rm ar}_\pi \log p_i 
&\geq
\frac{1}{\left(\sum \al^\cdot \right) \left(\sum \beta^\cdot \right)} \log^2 \left( \frac{\sum \beta^\cdot}{\sum \al^\cdot}  \right)
\wedge
\frac{1}{(\Delta I)^{\Delta^2}\sum \beta^\cdot}\log^2 \left( \frac{\sum \beta^\cdot}{(\Delta I)^{\Delta^2}}  \right)\\
&\geq
\frac{1}{\Delta^2} \log^2 \left( \frac{\Delta-1}{\Delta}  \right)
\wedge
\frac{1}{\Delta(\Delta I)^{\Delta^2}}\log^2 \left( \frac{\Delta}{(\Delta I)^{\Delta^2}}  \right)\\
\end{split}
\]
and then it just remains to remove the larger case. Applying standard logarithmic inequalities:
\[
\frac{1}{\Delta^2} \log^2 \left( \frac{\Delta-1}{\Delta} \right)
\geq 
\frac{1}{\Delta^4} ;
\]
\[
\frac{1}{\Delta(\Delta I)^{\Delta^2}}\log^2 \left( \frac{\Delta}{(\Delta I)^{\Delta^2}}  \right)
\leq
\frac{\Delta^3}{(\Delta I)^{\Delta^2}}\log^2 \left( \Delta I  \right)
\leq
\frac{\Delta^3}{(\Delta I)^{\Delta^2-2}}.
\]

Immediately from $I\geq 1$ we see that the second bound is smaller when $\Delta \geq 3$, and when $\Delta=2$ we use that $8I^4 \log^2 2 \geq \log^2 (8I^4)$ for the same observation.


Finally we simplify the expression
\[
\mathbb{V}{\rm ar}_\pi \log p_i
\geq \frac{\Delta^2-1}{\Delta(\Delta I)^{\Delta^2}}\log^2 \left( \Delta I \right)
\geq \frac{\Delta-1}{(\Delta I)^{\Delta^2}}\log^2 \left( \Delta I \right)
\geq \frac{1}{(\Delta I)^{\Delta^2}}.
\]
\end{proof}

The final comment before the proof of our first main theorem is to exclude probabilities close to $1$ -- finding these chains in $\sP$ is only possible with a layer consisting only of cycles and in that they case have very slow mixing.

\begin{proposition}\label{prop_max_prob}
Any vector $q \in \sP$ with 
\[
q^*\geq 1-\frac{1}{3(\Delta I)^{\Delta^2}\log(\Delta I) }
\]
has at almost surely at time $t \sim t_p$
\[
\inf_{v \in [N]}
\mathcal{D}_v
(t)
\geq 1-\frac{1}{\sqrt{N}}.
\]
\end{proposition}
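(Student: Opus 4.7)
The plan is to exploit the extremity of the hypothesis $q^{*}\ge 1-\tfrac{1}{3(\Delta I)^{\Delta^{2}}\log(\Delta I)}$ to show that the $q$-walker is, at the optimiser's cutoff time, concentrated on a set of at most $\sqrt{N}$ vertices.

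First, I would extract the structural consequence of $q^*$ being so close to $1$. Writing $i^{*}=\arg\max_i q_i$, feasibility $\sum_i q_i\alpha_i^{+}=1$ gives $q^{*}\alpha_{i^{*}}^{+}\le 1$; combined with $q^{*}>\tfrac{1}{2}$ this forces the \emph{integer} $\alpha_{i^{*}}^{+}$ to equal $1$ for every $\alpha\in\D$, and likewise $\alpha_{i^{*}}^{-}=1$ by double stochasticity. Hence layer $i^{*}$ is a permutation of $[N]$, i.e.\ a disjoint union of directed cycles, and summing the constraints over $j\neq i^{*}$ gives $\sum_{j\neq i^{*}}q_j\le \epsilon:=1-q^{*}$ and in particular $q_j\le\epsilon$ for each $j\neq i^{*}$.

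Second, I would show that $\mu_q$ is extraordinarily small while $\mu_{p_{\bigstar}}$ stays bounded below in terms of $\Delta,I$ alone. Using $\pi_q(i^{*})=q^{*}$, $\pi_q(j)\le\Delta q_j$, and concavity of $x\log(1/x)$ over $\sum_{j\neq i^{*}}q_j\le\epsilon$, a direct computation yields
\[
\mu_q\le 2\epsilon+\Delta\epsilon\log\!\bigl(I/\epsilon\bigr),
\]
which with the stated $\epsilon$ is $O\!\bigl(\Delta^{3}/(\Delta I)^{\Delta^{2}}\bigr)$. On the other hand, Corollary~\ref{cor_min_variance} gives $\operatorname{Var}_{\pi_{\bigstar}}(\log p_{\bigstar})\gtrsim \log^{2}(\Delta I)/(\Delta I)^{\Delta^{2}}$, and since Proposition~\ref{prop_positive_p} bounds $|\log p_{\bigstar,i}|\le\Delta^{2}\log(\Delta I)$, the elementary inequality $\mathbb{E}X\ge \operatorname{Var}(X)/\max X$ for the non-negative $X=-\log p_{\bigstar}$ produces a lower bound of the form $\mu_{p_{\bigstar}}\gtrsim\log(\Delta I)/\bigl(\Delta^{2}(\Delta I)^{\Delta^{2}}\bigr)$. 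The constant $3(\Delta I)^{\Delta^{2}}\log(\Delta I)$ in the hypothesis is calibrated precisely so that $t_{\bigstar}\mu_q=\mu_q\log N/\mu_{p_{\bigstar}}$ lies strictly below $\tfrac{1}{2}\log N$ with room to spare for a $\sqrt{\log N}$-sized fluctuation.

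Third, I would pass this comparison through a Gaussian tail estimate on the path weight. Theorem~\ref{thm_clt} applies to the layer chain $L_q$ for any $q\in\sP$ (using Corollary~\ref{cor_contraction} with $c=1/\sqrt{q_{*}}$); hence the log-path-weight $-\log W_t=\sum_{k=1}^{t}\log(1/q_{M_k})$ is approximately $\mathcal{N}(t\mu_q,t\sigma_q^{2})$. By the step above, $\tfrac{1}{2}\log N-t_{\bigstar}\mu_q \gg \sigma_q\sqrt{t_{\bigstar}\log N}$, so
\[
\mathbb{P}\!\left(-\log W_{t_{\bigstar}}\le \tfrac{1}{2}\log N\right)\;\ge\;1-\tfrac{1}{2\sqrt{N}}.
\]
The transfer of this path-weight statement from the layer chain to the network walk is analogous to Propositions~\ref{prop_Qbar_and_q}--\ref{prop_network_path_weights}; the error terms coming from tree-vs-graph rerooting and the local analysis in Lemma~\ref{lemma_rerooting} are all $o(1/\sqrt{N})$.

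Finally, the conclusion is a pigeonhole. Let $S$ be the set of endpoints of paths of weight at least $2/\sqrt{N}$ starting at $v$. Since path weights sum to $1$, there are at most $\sqrt{N}/2$ such paths, so $|S|\le\sqrt{N}/2$. By the previous step, $P^{t}(v,S)\ge 1-\tfrac{1}{2\sqrt{N}}$, whence
\[
\mathcal{D}_v(t)\;\ge\;P^{t}(v,S)-\frac{|S|}{N}\;\ge\;1-\frac{1}{2\sqrt{N}}-\frac{1}{2\sqrt{N}}\;=\;1-\frac{1}{\sqrt{N}}.
\]

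The main obstacle is the quantitative calibration in the third step: the contraction constant $c=1/\sqrt{q_{*}}$ can be enormous when $q_{*}$ is tiny, so the Berry--Esseen error in Theorem~\ref{thm_clt} degrades, and one needs the explicit $\Delta,I$-polynomial gap between $\mu_q$ and $\mu_{p_{\bigstar}}$ to overwhelm both this CLT error and the $\sqrt{\log N}$-quantile factor. Pessimistic book-keeping may require replacing the Berry--Esseen bound by a Bernstein-type inequality for bounded additive functionals of the layer chain, which can be obtained from the same contraction property.
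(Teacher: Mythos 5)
Your opening structural observation is exactly the paper's: $q^*>\tfrac12$ forces $\alpha_{i^*}^+=\alpha_{i^*}^-=1$ for every type, so layer $i^*$ is a permutation of $[N]$. But from there the paper takes a much more elementary route than yours: since the layer-$i^*$ step is deterministic, it does not increase the set of possible positions of the walker, while each non-$i^*$ step multiplies that set by at most $\Delta I$; the number of non-$i^*$ steps up to time $t$ is $\operatorname{Bin}(t,1-q^*)$ \emph{independently of the graph and the starting vertex}, and with $t\sim t_p\le(\Delta I)^{\Delta^2}\log N$ and $1-q^*\le\tfrac{1}{3(\Delta I)^{\Delta^2}\log(\Delta I)}$ this binomial is $\lesssim\tfrac{\log N}{3\log(\Delta I)}$, so the walk is supported on $N^{\nicefrac13+o_\p(1)}$ vertices and the TV distance to uniform is immediate. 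No CLT, no graph estimates, hence the almost-sure statement.

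Your CLT-based replacement has concrete gaps that I do not think can be repaired with the constant as stated. First, the calibration claim is false: your own bounds give $\mu_q\lesssim\Delta^3\epsilon\log(\Delta I)\cdot\Delta^{2}/\Delta^{2}\approx\Delta^{3}/(3(\Delta I)^{\Delta^{2}})$ and $\mu_{p_\bigstar}\gtrsim\log(\Delta I)/(\Delta^{2}(\Delta I)^{\Delta^{2}})$, whence $t_\bigstar\mu_q\lesssim\tfrac{2\Delta^{5}}{3\log(\Delta I)}\log N$, which already exceeds $\tfrac12\log N$ for $\Delta=2$, $I=2$ — the constant $3(\Delta I)^{\Delta^2}\log(\Delta I)$ is calibrated for the support-counting argument (it cancels $t_p\le(\Delta I)^{\Delta^2}\log N$ against the base $\Delta I$ of the exponential growth), not for an entropy comparison. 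Second, the hypothesis constrains only $q^*$, so $q_*$ may be zero or exponentially small; then $\|f\|=\|\log(1/q)\|$ and the contraction constant $c=1/\sqrt{q_*}$ are unbounded and neither Theorem~\ref{thm_clt} nor a Bernstein substitute applies uniformly. Third, the transfer from the layer chain to the network via Propositions~\ref{prop_Qbar_and_q}--\ref{prop_network_path_weights} carries errors of order $1/\sqrt{\log N}$, not $o(1/\sqrt N)$, and only holds with high probability over the graph — so it cannot produce the almost-sure bound $1-1/\sqrt N$. You should replace the middle of your argument by the deterministic support bound.
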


\begin{proof}
Because $\sum_i q_i\leq 1$ and $q^* > \nicefrac{1}{2}$, there is a unique $i$ with $q_i=q^*$. Also, for every type $\al \in \D$ we have $\al_i^+=\al_i^-=1$ at this $i$.

By taking the $i$ step we see no increase in the set of possible positions of the walker, and so the size of this set at time $t$ is stochastically dominated by
\[
\left(\Delta I\right)^{B_t},
\qquad
\text{where}\,\,
B_t \sim \operatorname{Bin}\left(
t,1-q_i
\right).
\]

So at any time 
\[
t \sim t_p \leq (\Delta I)^{\Delta^2}\log N
\qquad
\text{we have} \,\,
B_t \leq \frac{1+o_{\p}(1)}{3\log(\Delta I)} \cdot \log N
\]
and so the walker is distributed on $N^{\nicefrac{1}{3}+o_{\p}(1)}$ vertices. This bound holds almost surely regardless of the graph realisation or initial vertex (the order in probability is just considering the randomness in $B_t$) which gives the claimed almost sure statement.
\end{proof}

We now prove our first main theorem, which was as follows.

\optimisercutoff*

\begin{proof}
We first comment that by Proposition \ref{prop_max_prob} we may restrict our attention to $q \in \sP$ with $q^*\leq 1-\nicefrac{1}{3(\Delta I)^{\Delta^2}\log(\Delta I) }$.

Write $p=p_\bigstar$. We compare the mixing of the walk defined by $p$ to that of every other $q \in \mathscr{P}$, on the same single graph realisation, through various cases. 
 These cases classify $q$ by its vector of relative errors \[\epsilon_i:=|1-q_i/p_i|.\]
 
 For some sufficiently small constant $\theta$, we control that the supremum error tends to $0$ with small $\theta$ in all four cases of the relative error, with high probability. Then the conclusion will follow from taking $\theta$ arbitrarily small.
 
Notationally a particular time $t$ is parametrised
\[
t=t_q+\lambda_q w_q=t_p+\lambda_p w_p
\]
so here really $\lambda_q=\lambda(t,q)$.

Using Theorem \ref{thm_simultaneous_cutoff}, then, we have control of the $q$ curve underneath the $p$ curve if $\lambda(t,q)>\lambda(t,p)$. To this end, we show the following.

\begin{claim}
Then, uniformly over all $N$ sufficiently large and all $q \in \mathscr{P}$ with $\|\epsilon\|_{2,\pi}^2 \geq \nicefrac{\theta}{\sqrt{\log N}}$ and $\|\epsilon\|_{\infty}\leq \theta^2$ we have
\[
\sup_t \left(
\Phi(\lambda_q)
-
\Phi(\lambda_p)
\right)^+
= e^{-\Omega\left( \theta^{-2}\right)}.
\] 
\end{claim}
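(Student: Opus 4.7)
The plan is to parametrise $\lambda_q$ as an affine function of $\lambda_p$ and reduce the supremum over $t$ to a one-dimensional comparison of two Gaussian CDFs. From $t = t_p + \lambda_p w_p = t_q + \lambda_q w_q$ one obtains $\lambda_q = a\lambda_p - b$ with
\[
a := \frac{w_p}{w_q}, \qquad b := \frac{t_q - t_p}{w_q} \geq 0,
\]
where $b \geq 0$ by optimality of $p_\bigstar$. For $a = 1$ the positive part vanishes identically, and for $a \neq 1$ the sign of $\Phi(a\lambda - b) - \Phi(\lambda)$ changes at the unique fixed point $\lambda_\star = b/(a-1)$; in both the $a > 1$ and $a < 1$ subcases a direct monotonicity check bounds the positive part by $1 - \Phi(b/|a - 1|)$. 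Since $w_q\,|a - 1| = |w_p - w_q|$, this reduces the claim to proving
\[
\frac{t_q - t_p}{|w_p - w_q|} = \Omega(\theta^{-1}).
\]

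For the numerator I use the maximality of $p = p_\bigstar$ in an essential way. The Hessian of $\mu$ is diagonal with entries $-m_j/p_j$ where $m_j = \sum_\alpha k(\alpha)\alpha_j^+/N \geq 1$, and $p_j$ is bounded below by a graph-dependent positive constant via Proposition \ref{prop_positive_p}, hence these eigenvalues are uniformly bounded above by a negative constant on a neighbourhood of $p$. Since $q - p$ lies in the tangent space of $\mathscr{P}$ and $\nabla\mu(p)$ annihilates that tangent space at the critical point $p$, Taylor's theorem with integral remainder gives the strong-concavity bound
\[
\mu_p - \mu_q \gtrsim \|q - p\|_2^2 \gtrsim \|\epsilon\|_{2,\pi}^2,
\]
uniformly for $q$ in the neighbourhood guaranteed by $\|\epsilon\|_\infty \leq \theta^2$ with $\theta$ small. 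Therefore
\[
t_q - t_p = \frac{\log N\,(\mu_p - \mu_q)}{\mu_p \mu_q} \gtrsim (\log N)\,\|\epsilon\|_{2,\pi}^2.
\]

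For the denominator I appeal to the continuity results already in the paper. Proposition \ref{prop_sigma_approx} gives $|\sigma_q^2 - \sigma_p^2| \lesssim \|\epsilon\|_\infty$, and the lower bound $\sigma_p \gtrsim 1$ from Corollary \ref{cor_min_variance} combined with Lemma \ref{lemma_dynamical_variance_bounds} upgrades this to $|\sigma_q - \sigma_p| \lesssim \|\epsilon\|_\infty$; similarly $|\mu_q - \mu_p| \lesssim \|\epsilon\|_\infty$ directly. A first-order expansion of $w_r = \sigma_r\sqrt{\log N/\mu_r^3}$ about $r = p$ then yields $|w_p - w_q| \lesssim \sqrt{\log N}\,\|\epsilon\|_\infty$, and combining with the hypotheses $\|\epsilon\|_{2,\pi}^2 \geq \theta/\sqrt{\log N}$ and $\|\epsilon\|_\infty \leq \theta^2$ produces
\[
\frac{t_q - t_p}{|w_p - w_q|} \gtrsim \frac{\sqrt{\log N}\,\|\epsilon\|_{2,\pi}^2}{\|\epsilon\|_\infty} \geq \frac{1}{\theta}.
\]
The standard Gaussian tail bound $1 - \Phi(x) \leq e^{-x^2/2}$ now closes the claim at $e^{-\Omega(\theta^{-2})}$.

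The main obstacle will be nailing down the quadratic lower bound $\mu_p - \mu_q \gtrsim \|\epsilon\|_{2,\pi}^2$ with constants depending only on the graph's bounded degrees (and not on $N$): this is precisely where optimality of $p_\bigstar$ enters, and one must track the equivalence between the Euclidean norm $\|q - p\|_2$ appearing in strong concavity and the $\pi$-weighted norm $\|\epsilon\|_{2,\pi}$ appearing in the hypotheses. Once this is in place the remaining ingredients are routine bookkeeping of the expansions already developed in Proposition \ref{prop_sigma_approx}.
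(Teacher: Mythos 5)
Your proposal is correct and takes essentially the same route as the paper: your crossing point $\lambda_\star = (t_q-t_p)/|w_q-w_p|$ is exactly the intersection value $-\lambda$ the paper computes, your strong-concavity bound $\mu_p-\mu_q\gtrsim\|\epsilon\|_{2,\pi}^2$ at the optimiser is precisely the paper's Lemma~\ref{prop_quad_mu}, and the window comparison via Proposition~\ref{prop_sigma_approx} matches. The only cosmetic difference is that you handle both signs of $w_q-w_p$ explicitly, whereas the paper discards the case $w_q\leq w_p$ at the outset since $t_q>t_p$ already makes such $q$ irrelevant.
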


\begin{proof}[Proof of claim]
Note first that by construction $t_q>t_p$. Hence for $q$ to be relevant in $\inf_{\sP}
\mathcal{D}(t)$ it will require at least $w_q>w_p$.
Suppose we are in this case, and at some time $t$ the Gaussians have the same cumulative probability, i.e.
\[
t=
t_p+\lambda w_p
=
t_q+\lambda w_q
\]

Then we claim that this intersection has big negative $\lambda$, and so it remains to lower bound
\[
-\lambda=
\frac{t_q-t_p}{w_q-w_p}
\]
which would say that at this point both curves are anyway close to $1$.

By Lemma \ref{prop_quad_mu} for the numerator
\[
\frac{t_q-t_p}{\log N}
=\frac{1}{\mu_q}-\frac{1}{\mu_p}
\geq 
\frac{1}{\mu_p - \frac{1}{3} \|\epsilon\|_{2,\pi}^2}-\frac{1}{\mu_p}
\geq
\frac{\|\epsilon\|_{2,\pi}^2}{3\mu_p^2}.
\]

We then note from Proposition \ref{prop_sigma_approx} that as $\|\epsilon\|_{\infty}\rightarrow 0$
\[
\sigma_q-\sigma_p=
\frac{
\sigma^2_q-\sigma^2_p}{
\sigma_q+\sigma_p}
\lesssim
\frac{11 I \Delta^3 \log^2 (I \vee 8) \|\epsilon\|_{\infty}}{2 \sigma_p} 
\]
hence for the denominator
\[
\begin{split}
\frac{w_q-w_p}{\sqrt{\log N}}&=
\frac{\sigma_q}{\mu_q^{\nicefrac{3}{2}}}
-
\frac{\sigma_p}{\mu_p^{\nicefrac{3}{2}}}
\lesssim
\frac{\sigma_p+\frac{11 I \Delta^3 \log^2 (I \vee 8) \|\epsilon\|_{\infty}}{2 \sigma_p} }{
\left(
\mu_p - \|\epsilon\|_{2,\pi}^2
\right)^{\nicefrac{3}{2}}
}
-
\frac{\sigma_p}{\mu_p^{\nicefrac{3}{2}}}\\
&=
\frac{\sigma_p}{\mu_p^{\nicefrac{3}{2}}}\left(
\left(
1+\frac{11 I \Delta^3 \log^2 (I \vee 8) \|\epsilon\|_{\infty}}{2 \sigma^2_p}
\right)
\left(
1-\frac{\|\epsilon\|_{2,\pi}^2}{\mu_p}
\right)^{\nicefrac{-3}{2}}
-
1
\right)\\
&\sim
\frac{\sigma_p}{\mu_p^{\nicefrac{3}{2}}}\left(
\frac{11 I \Delta^3 \log^2 (I \vee 8) \|\epsilon\|_{\infty}}{2 \sigma^2_p}
+
\frac{3\|\epsilon\|_{2,\pi}^2}{2\mu_p}
\right).
\end{split}
\]

And so, bringing both inequalities together and uniformly in $q$ (recall Corollary \ref{cor_min_variance} and Proposition \ref{prop_positive_p}),
\[
\begin{split}
-\lambda&\geq 
\frac{\|\epsilon\|_{2,\pi}^2 \sqrt{\log N} }{3\sigma_p\sqrt{\mu_p}\left(
\frac{11 I \Delta^3 \log^2 (I \vee 8) \|\epsilon\|_{\infty}}{2 \sigma^2_p}
+
\frac{3\|\epsilon\|_{2,\pi}^2}{2\mu_p}
\right)}
\geq \Omega(1)
\frac{\|\epsilon\|_{2,\pi}^2 \sqrt{\log N} }{
\frac{ \|\epsilon\|_{\infty}}{\sigma_p}
+
\frac{\|\epsilon\|_{2,\pi}^2}{\sqrt{\mu_p}}}\\
&\geq \Omega(1)\left(
\sqrt{\mu_p\log N}
\wedge
\frac{\|\epsilon\|_{2,\pi}^2 \sigma_p \sqrt{\log N} }{
 \|\epsilon\|_{\infty}}
\right)
= \Omega\left(
\frac{1}{\theta}
\right).
\end{split}
\]

We conclude
\[
\sup_t \left(
\Phi(\lambda_q)
-
\Phi(\lambda_p)
\right)^+
=
\sup_{\lambda_q<\lambda} \left(
\Phi(\lambda_q)
-
\Phi(\lambda_p)
\right)^+
\leq 
\Phi(\lambda)
\leq
\frac{1}{-\lambda\sqrt{2\pi}}e^{-\nicefrac{\lambda^2}{2}}
\] 
which has the claimed asymptotic as $\theta \rightarrow 0$.
\end{proof}

This required a lower bound assumption on the relative error, but we find that without that assumption there is not a significant difference between $p$ and $q$.

\begin{claim}
Then, uniformly over all $N$ sufficiently large and all $q \in \mathscr{P}$ with   $\|\epsilon\|_{2,\pi}^2 \leq \nicefrac{\theta}{\sqrt{\log N}}$ and $\|\epsilon\|_{\infty}\leq \theta^2$, 
we have
\[
\sup_t \left(
\Phi(\lambda_q)
-
\Phi(\lambda_p)
\right)^+
=O\left(
\theta
\right).
\] 
\end{claim}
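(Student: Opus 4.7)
The plan is to exploit the fact that in this regime both relative errors are small, so the shape parameters of $q$ are close to those of $p$ and the Gaussian profiles $\Phi(\lambda_q)$ and $\Phi(\lambda_p) = \Phi(\lambda)$ nearly coincide pointwise in $\lambda$.

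First I would re-parametrise: from the defining identity $t = t_p + \lambda w_p = t_q + \lambda_q w_q$,
\begin{equation*}
\lambda_q \;=\; \frac{\lambda + \eta_1}{1 + \eta_2}, \qquad \eta_1 := \frac{t_p - t_q}{w_p}, \qquad \eta_2 := \frac{w_q - w_p}{w_p}.
\end{equation*}
The key estimates are $|\eta_1| = O(\theta)$ and $|\eta_2| = O(\theta^2)$. For $\eta_1$, Lemma \ref{prop_quad_mu} yields $|\mu_q - \mu_p| = O(\|\epsilon\|_{2,\pi}^2) = O(\theta/\sqrt{\log N})$; dividing the resulting $|t_q - t_p| = O(\theta\sqrt{\log N}/\mu_p^2)$ by $w_p = \sigma_p\sqrt{\log N/\mu_p^3}$ and using the uniform lower bounds $\mu_p = \Omega(1)$ from Proposition \ref{prop_positive_p} and $\sigma_p = \Omega(1)$ from Corollary \ref{cor_min_variance} together with Lemma \ref{lemma_dynamical_variance_bounds}, this gives $|\eta_1| = O(\theta)$. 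For $\eta_2$, Proposition \ref{prop_sigma_approx} gives $|\sigma_q^2 - \sigma_p^2| = O(\|\epsilon\|_\infty) = O(\theta^2)$, hence $|\sigma_q/\sigma_p - 1| = O(\theta^2)$, while $(\mu_p/\mu_q)^{3/2} - 1 = O(\theta/\sqrt{\log N})$; combining these, $|\eta_2| = O(\theta^2)$.

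Next I would split on the magnitude of $\lambda$. In the central regime $|\lambda| \leq 1/\theta$,
\begin{equation*}
|\lambda_q - \lambda| \;=\; \left| \frac{\eta_1 - \lambda \eta_2}{1 + \eta_2} \right|
\;\leq\; \frac{|\eta_1| + |\lambda|\,|\eta_2|}{1 - |\eta_2|}
\;=\; O(\theta),
\end{equation*}
so the Lipschitz property $\|\Phi'\|_\infty = 1/\sqrt{2\pi}$ immediately yields $|\Phi(\lambda_q) - \Phi(\lambda)| = O(\theta)$. For $\lambda > 1/\theta$ we have $\Phi(\lambda) \geq \Phi(1/\theta)$, so $(\Phi(\lambda_q) - \Phi(\lambda))^+ \leq 1 - \Phi(1/\theta) \leq (\theta/\sqrt{2\pi})e^{-1/(2\theta^2)} = o(\theta)$ by Mills' ratio. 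For $\lambda < -1/\theta$, taking $\theta$ small enough that $|\eta_1| \leq |\lambda|/2$ and $1 + \eta_2 \in [\tfrac{1}{2}, \tfrac{3}{2}]$, one gets $\lambda_q \leq (\lambda+\eta_1)/(3/2) \leq \lambda/3 < -1/(3\theta)$ and $\Phi(\lambda_q) \leq \Phi(-1/(3\theta)) = o(\theta)$ by the same tail estimate.

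The main delicacy is this tail regime: the Lipschitz route on $|\lambda_q - \lambda|$ fails because $-\lambda \eta_2$ can be of order $1$, but precisely there both $\Phi(\lambda)$ and $\Phi(\lambda_q)$ are already within $o(\theta)$ of their limits $0$ or $1$, and the one-sided difference is controlled by the Gaussian tail alone. No probabilistic input beyond the asymptotic expansions of $\mu_p - \mu_q$ and $\sigma_p^2 - \sigma_q^2$ that already served in the first claim is required.
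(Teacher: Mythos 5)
Your proposal is correct and follows essentially the same route as the paper: bound $|t_p-t_q|/w_p=O(\theta)$ via Lemma \ref{prop_quad_mu} and the uniform lower bounds on $\mu_p,\sigma_p$, bound $|w_q/w_p-1|=O(\theta^2)$ via Proposition \ref{prop_sigma_approx}, then treat the central window by the Lipschitz property of $\Phi$ and the two tails by Gaussian tail bounds. The only (harmless) cosmetic difference is that you parametrise by $\lambda=\lambda_p$ and split on its size, whereas the paper splits on $\lambda_q$; your one-sided bound $(\Phi(\lambda_q)-\Phi(\lambda_p))^+\le 1-\Phi(1/\theta)$ in the right tail is in fact slightly cleaner than the paper's.
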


\begin{proof}[Proof of claim]

By Lemma \ref{prop_max_entropy}, Corollary \ref{cor_min_variance} and Lemma \ref{lemma_dynamical_variance_bounds}
\[
w_p=\sigma_p \sqrt{\frac{\log N}{\mu_p^3}}
\geq\frac{1}{\Delta(\Delta I)^{\Delta^2}} \sqrt{\frac{\log N}{\log(\Delta I)^3}}
\]
and by Lemma \ref{prop_quad_mu} and Proposition \ref{prop_positive_p}
\[
\left|
t_p-t_q
\right|
\leq
\frac{\log N}{\mu_p - \|\epsilon\|_{2,\pi}^2}-\frac{\log N}{\mu_p}
\lesssim
\frac{\|\epsilon\|_{2,\pi}^2\log N}{\mu_p^2}
\leq
(\Delta I)^{2\Delta^2}\|\epsilon\|_{2,\pi}^2\log N
\]
and so by our assumption on $\|\epsilon\|_{2,\pi}^2$
\[
\frac{\left|
t_p-t_q
\right|}{w_p}
\leq
\|\epsilon\|_{2,\pi}^2
\sqrt{\log N}
\cdot
\Delta
(\Delta I)^{3\Delta^2} \log^{\nicefrac{3}{2}}(\Delta I)
=O\left(
\theta
\right).
\]

Comparing the window lengths, in the previous proof we found
\[
\left|
w_q-w_p
\right|
\lesssim
\frac{\sigma_p}{\mu_p^{\nicefrac{3}{2}}}\left(
\frac{11 I \Delta^3 \log^2 (I \vee 8) \|\epsilon\|_{\infty}}{2 \sigma^2_p}
+
\frac{3\|\epsilon\|_{2,\pi}^2}{2\mu_p}
\right)\sqrt{\log N}.
\]
and by inserting the lower bound on $w_p$ and applying the same constant upper and lower bounds on $\sigma_p$ and $\mu_p$
\[
\left|
\frac{w_q}{w_p}-1
\right|
\leq
\frac{\sigma_p}{\mu_p^{\nicefrac{3}{2}}}\left(
\frac{11 I \Delta^3 \log^2 (I \vee 8) \|\epsilon\|_{\infty}}{2 \sigma^2_p}
+
\frac{3\|\epsilon\|_{2,\pi}^2}{2\mu_p}
\right)
\Delta
(\Delta I)^{\Delta^2}
\log(\Delta I)^{\nicefrac{3}{2}}
=O\left(
\theta^2
\right).
\]

By these calculations
\[
\lambda_p=\frac{t_q-t_p+\lambda_q w_q}{w_p}
=
O\left(
\theta
\right)
+\lambda_q \left(1+
O\left(
\theta^2
\right)
\right)
\]
and so we have control of the difference in the region $|\lambda_q|\leq \nicefrac{1}{\theta}$, in which $|\lambda_p-\lambda_q|=O\left(
\theta
\right)$. When $\lambda_q\leq -\nicefrac{1}{\theta}$, however, $\left(
\Phi(\lambda_q)
-
\Phi(\lambda_p)
\right)^+\leq \Phi(-\nicefrac{1}{\theta})=o\left(
{\theta}
\right)$. When $\lambda_q\geq \nicefrac{1}{\theta}$, we use $\lambda_p \geq (1+o(1)) \lambda_q$ to say $\left(
\Phi(\lambda_q)
-
\Phi(\lambda_p)
\right)^+\leq 1-\Phi(\nicefrac{(1+o(1))}{\theta})$ which is also $o\left(
{\theta}
\right)$.
\end{proof}

These two cases combine, using Theorem \ref{thm_simultaneous_cutoff}, to prove 
\[
\p\left(
\sup_{t >0}
\left|
\inf_{\|\epsilon\|_{\infty}\leq \theta^2}
\mathcal{D}(t)-1+\Phi\left(
\frac{t-t_\bigstar}{w_\bigstar}
\right)
\right|
\geq \Omega(\theta)
\right)
=O\left(
e^{-\sqrt[3]{\log N}}
\right)
\]
or in particular the claimed convergence to $0$ in probability, but only for the partial infimum over the smaller space
\[
\left\{q:
\|\epsilon\|_{\infty}\leq \theta^2
\right\}
\subset \mathscr{P}_{\theta \log^{1/3}N}
=
\left\{q:
q_*\geq \theta
\right\}
\]
provided $\theta \leq (\Delta I)^{\Delta^2}$ (recall Proposition \ref{prop_positive_p}).

Outside of these two cases, we are no longer in the central limit regime of  Theorem \ref{thm_simultaneous_cutoff} and so we instead use the following bound.

\begin{theorem}[ {\cite[Theorem A]{kloeckner2019}} ]\label{thm_concentration}
Consider a chain $(X_k)_k$ with transition matrix $P$, stationary distribution $\pi$ and arbitrary initial condition. Suppose also the chain is a contraction (in a norm with Assumptions \ref{assumptions_kloeckner}) with parameter $\delta$
\[
\pi(x)=0 \quad \implies \quad
\frac{\|Px\|}{\|x\|} \leq 1-\delta.
\]

The empirical sum $S_t=\sum_{k=1}^t { f}(X_k)$, with mean term $\mu=\pi({ f})$, 
has
\[
\p\left(\left|
\frac{1}{t} S_t-\mu 
\right|\geq a
\right)
\leq
\exp\left(
1-t \frac{\delta}{14\delta+9}
\frac{a^2}{\|{ f}\|^2}
\mathbbm{1}_{3a \leq \delta \|{ f}\|}
\right)
\]
whenever $t>\left\lceil\frac{\log 100}{\log 13 - \log(1-\delta)}\right\rceil$.
\end{theorem}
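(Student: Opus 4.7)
The plan is to reduce the problem to concentration of a bounded-difference martingale via the Poisson equation, then close with a Bernstein-type MGF bound. Write $\bar f := f - \mu$ so that $\pi(\bar f) = 0$. The contraction hypothesis on mean-zero functions gives $\|P^k \bar f\| \le (1-\delta)^k \|\bar f\|$, so the Neumann-type series $g := \sum_{k \ge 0} P^k \bar f$ converges absolutely in $\|\cdot\|$ to a solution of the Poisson equation $(I - P)g = \bar f$ with $\|g\| \le \|\bar f\|/\delta$. Assumption \ref{assumptions_kloeckner} then propagates this to $\|g\|_\infty \le \|g\| \le 2\|f\|/\delta$, using $|\mu|\|1\| \le \|f\|$ from the same list.

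Next I would exploit the identity $\bar f(X_k) = g(X_k) - (Pg)(X_k) = g(X_k) - \mathbb{E}[g(X_{k+1}) \mid X_k]$ to telescope
\[
S_t - \mu t \;=\; M_t + (Pg)(X_0) - g(X_t), \qquad M_t := \sum_{k=1}^{t} \bigl(g(X_k) - \mathbb{E}[g(X_k) \mid X_{k-1}]\bigr),
\]
where $(M_t)$ is a martingale with increments bounded in absolute value by $B := 2\|g\|_\infty \le 4\|f\|/\delta$, and the boundary defect is deterministically of the same order. For each increment a Bennett--Bernstein computation gives
\[
\mathbb{E}\bigl[e^{\lambda \Delta_k} \,\big|\, X_{k-1}\bigr] \;\le\; \exp\!\Bigl(\tfrac{\lambda^2 V}{2(1 - \lambda B/3)}\Bigr), \qquad \lambda B < 3,
\]
where the conditional variance is estimated, crucially via the submultiplicativity clause of Assumption \ref{assumptions_kloeckner}, by $V \le \|g^2\| \le \|g\|^2 \le \|\bar f\|^2/\delta^2$ rather than the cruder $\|g\|_\infty^2$. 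Multiplying these over $k$, applying Markov's exponential inequality, and optimising in $\lambda$ produces the familiar two-regime Bernstein bound $\p(|M_t| \ge at) \le \exp\!\bigl(-c\,t\,a^2/(V + Ba)\bigr)$, whose sub-Gaussian regime is cut off exactly at $3a \le \delta \|f\|$ after substituting $B$ and $V$, matching the indicator in the theorem.

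The main obstacle is the bookkeeping of the explicit constants $\delta/(14\delta + 9)$, the threshold $3a \le \delta\|f\|$, and the additive $+1$ in the exponent. Three steps each introduce a $\delta$-dependent factor: the Neumann bound on $\|g\|$; the variance estimate, which only achieves the right order by invoking submultiplicativity on $g^2$ rather than bounding $\mathbb{E}_\pi g^2$ by $\|g\|_\infty^2$; and the absorption of the boundary term $(Pg)(X_0) - g(X_t)$ into the constant $1$ in the exponent. Keeping these disentangled without double-counting is precisely what forces the hypothesis $t \ge \lceil \log 100/(\log 13 - \log(1-\delta)) \rceil$, which guarantees the boundary defect is dominated by the Bernstein exponent. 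The cleanest route to the sharp denominator $14\delta + 9$ is to optimise the Chernoff parameter $\lambda$ symbolically before applying any crude simplification of $1 - \lambda B/3$, and then read the two tail regimes off the resulting expression.
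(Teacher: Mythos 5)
First, a point of order: the paper does not prove this statement at all --- it is imported verbatim as Theorem A of \cite{kloeckner2019} and used as a black box, so there is no internal proof to compare against. What follows is therefore an assessment of your reconstruction against Kloeckner's actual argument.

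Your route --- solve the Poisson equation $g=\sum_{k\ge 0}P^k\bar f$ via the contraction, telescope into a bounded-increment martingale, and close with Bernstein --- is a legitimate but genuinely different strategy from Kloeckner's, which runs effective perturbation theory on the twisted transfer operator $h\mapsto P(e^{\xi f}h)$ and bounds $\mathbb{E}[e^{\xi S_t}]$ by powers of its leading eigenvalue; the constant $14\delta+9$, the threshold $3a\le\delta\|f\|$, and the admissibility condition $t>\left\lceil \log 100/\log\tfrac{13}{1-\delta}\right\rceil$ are all artifacts of that perturbation machinery and will not fall out of a Chernoff optimisation over martingale increments, however carefully the bookkeeping is done. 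More seriously, your sketch has a quantitative gap: bounding the conditional variance of each increment by the worst case $V\le\|g\|^2\le 4\|f\|^2/\delta^2$ yields a sub-Gaussian exponent of order $\delta^2\,t a^2/\|f\|^2$, a full factor of $\delta$ weaker than the claimed $\tfrac{\delta}{14\delta+9}\tfrac{ta^2}{\|f\|^2}\sim\tfrac{\delta}{9}\tfrac{ta^2}{\|f\|^2}$. Recovering the linear-in-$\delta$ rate requires replacing the worst-case per-step variance by the sum of conditional variances, whose mean is governed by the asymptotic variance $\sigma^2=O(\|f\|^2/\delta)$ (compare Lemma \ref{lemma_dynamical_variance_bounds} of this paper), together with a Freedman-type inequality to control its fluctuations --- none of which appears in your sketch. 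Finally, your claim that submultiplicativity gives $V\le\|g\|^2$ ``rather than the cruder $\|g\|_\infty^2$'' is backwards: Assumptions \ref{assumptions_kloeckner} give $\|g\|_\infty\le\|g\|$, so $\|g\|_\infty^2$ is the sharper of the two bounds and the Banach-algebra property buys you nothing at that step.
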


\begin{claim}
Every $q \in \mathscr{P}$ with $\|\epsilon\|_{\infty}\geq \theta^2$ but still \[
q_* \geq \frac{\theta \log \log N}{\log N}
\] has $\mathcal{D}(t) = 1- O\left(
\nicefrac{1}{\log \log N}
\right)$ at $t \sim t_p$, with high probability.
\end{claim}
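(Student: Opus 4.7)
The plan is to show that the walker's log-path-weight $S_t = \sum_{k=1}^t -\log q_{J_k}$ stays below $(1-\eta)\log N$ with probability $1 - O(1/\log\log N)$ for some constant $\eta = \eta(\theta) > 0$. Combined with the standard bound $\mathcal{D}_v(t) \geq Q_{v,t}(\varphi) - 1/\sqrt{N\varphi}$ (used already in the proof of Theorem \ref{thm_simultaneous_cutoff}) applied at $\varphi = N^{-1+\eta}$, this will give the claimed TV lower bound.

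First I would establish the gap $\mu_p - \mu_q \geq c \theta^4$ for some constant $c = c(\Delta, I) > 0$. Since $p = p_\bigstar$ is an interior maximiser of the strictly concave $\mu$ on the affine space $\sP$ (Proposition \ref{prop_positive_p}), the first-order condition $\sum_i m_i f'(p_i)(q_i - p_i) = 0$ for every $q \in \sP$ kills the linear term of the Taylor expansion. With $f(x) = -x \log x$ and $f''(x) = -1/x$, this leaves
\[
\mu_p - \mu_q \;=\; \tfrac{1}{2}\sum_i m_i (q_i - p_i)^2/\xi_i \;\geq\; \tfrac{1}{2}\|q-p\|_\infty^2,
\]
with $\xi_i \in [p_i\wedge q_i,\, p_i\vee q_i] \leq 1$ and $m_i \geq 1$. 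Combined with $p_* \geq (\Delta I)^{-\Delta^2}$ and the hypothesis $\|\epsilon\|_\infty \geq \theta^2$, this forces $\|q - p\|_\infty \geq \theta^2 p_*$ and hence the claimed gap.

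Next I would apply Chebyshev's inequality to $S_t$ on the layer chain $L_q$: the relaxation-time bound of Corollary \ref{cor_contraction} gives, via the geometric-sum covariance estimate, $\operatorname{Var}(S_t) \leq 2\Delta t\, \operatorname{Var}_{\pi}(\log q) = O(t(\log\log N)^2)$, using $q_* \geq \theta\log\log N/\log N$ to bound $\|\log q\|_\infty \lesssim \log\log N$. Choosing $\eta := (\mu_p - \mu_q)/(2\mu_p)$ places the threshold $(1-\eta)\log N$ a margin $\Omega(\theta^4)\, t_p$ above $\mathbb{E} S_t = \mu_q t_p + O(1)$, so Chebyshev yields $\p_{L_q}(S_t \leq (1-\eta)\log N) \geq 1 - O((\log\log N)^2/\log N)$. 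To transfer this bound to the graph walker I iterate Lemma \ref{lemma_rerooting} over the $t_p/h = O(1)$ blocks of length $h$: at each block start the walker sits at a nice vertex except with probability $(q^*)^h \leq e^{-q_* h} \leq (\log N)^{-\theta/(10\log \Delta I)}$, and on that good event the walker's layer sequence realises $L_q$ exactly within the tree ball of that block. Both error terms are $o(1/\log\log N)$, yielding $Q_{v,t}(N^{-1+\eta}) \geq 1 - O(1/\log\log N)$ uniformly in $v$.

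The main obstacle is this transfer step: in the present regime the warm-up length $\ell \sim 3\log\log N/\log(1/q^*)$ underlying Proposition \ref{prop_Qbar_and_q} exceeds $t_p$, so its CLT-scale machinery is unavailable. Fortunately we only need concentration of $S_t$ at constant precision in $\log N$ rather than at the window scale $w_p$, so the coarse tree-iteration through Lemma \ref{lemma_rerooting} alone suffices.
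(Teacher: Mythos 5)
Your skeleton matches the paper's: a quadratic gap $\mu_p-\mu_q\gtrsim\theta^4$ from the second-order Taylor expansion at the interior optimiser, concentration of the log-path-weight at precision $o(\log N)$, and the lower bound $\mathcal{D}_v(t)\geq Q_{v,t}(\varphi)-\nicefrac{1}{\sqrt{N\varphi}}$ with $\varphi=N^{-1+\eta}$. Replacing the paper's use of Theorem \ref{thm_concentration} (Kloeckner's Theorem A, whose bound degrades through $\|f\|^2\lesssim\nicefrac{1}{q_*}$ and yields exactly the $O(\nicefrac{1}{\log\log N})$ error) by Chebyshev with the covariance bound of Lemma \ref{lemma_dynamical_variance_bounds} is legitimate and in fact gives a stronger error $O\bigl((\log\log N)^2/\log N\bigr)$ for the idealised chain; that part is a fine, more elementary route.

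The genuine gap is the transfer from the annealed layer chain $L_q$ to the quenched graph. First, your stated obstacle is not real: after the reduction via Proposition \ref{prop_max_prob} made at the start of the proof of Theorem \ref{thm_optimiser_cutoff}, every remaining $q$ has $q^*\leq 1-\nicefrac{1}{3(\Delta I)^{\Delta^2}\log(\Delta I)}$, so $\log(\nicefrac{1}{q^*})$ is bounded below by a constant and $\ell=O(\log\log N)\ll t_p$; Proposition \ref{prop_Qbar_and_q} is stated for all $p\in\mathscr{P}$ and is exactly what the paper invokes here. Second, the substitute you propose does not do the job: Lemma \ref{lemma_rerooting} only bounds the probability of sitting at a non-nice vertex, and the assertion that ``on that good event the walker's layer sequence realises $L_q$ exactly within the tree ball'' is false as a quenched statement. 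On a fixed realisation the types of the vertices the walker meets are determined by the graph, so the layer sequence is \emph{not} the Markov chain $L_q$; it only has that law when averaged over the graph randomness. What you need is that the quenched functional $Q_{v,t}(\varphi)$ (equivalently $\bar Q_{v,t}$) concentrates around the annealed $q^{(\tau(v))}_t(\varphi)$ simultaneously for all $v$ and all grid representatives $r(q)$ --- this is precisely the content of Proposition \ref{prop_Qbar_and_q} (the $\lfloor\log^2N\rfloor$-walkers exploration of \cite[Lemma 9]{bordenave18}), and no block iteration of Lemma \ref{lemma_rerooting} supplies it. The fix is simply to keep your Chebyshev estimate for $q^{(\al)}_{t-2\ell}$ and route the transfer through Propositions \ref{prop_Qbar_and_q} and \ref{lemma_rerooting} exactly as in the paper, tracking the harmless $q_*^{\pm 2\ell}=e^{O((\log\log N)^2)}$ corrections to $\varphi$.
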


\begin{proof}[Proof of claim]
By Proposition \ref{prop_Qbar_and_q} with probability at least $1-e^{-\sqrt[3]{\log N}}$ (simultaneously for every $p \in \mathscr{P}$)
\[
\min_{v \in V}
q^{(\tau(v))}_{t-2\ell}\left(\varphi\right)-
\frac{2}{\sqrt{\log N}}
\leq
\min_{v \in V}
\bar{Q}_{v,t-2\ell}\left(\frac{\varphi}{\left(1+\frac{1}{\log N}\right)^2} \right)
\]
and then as in the proof of Proposition \ref{prop_network_path_weights}, with probability at least $1-N^{\nicefrac{-1}{6}}$ (coming from Lemma \ref{lemma_rerooting}),
\[
\min_{v \in V}
\bar{Q}_{v,t-2\ell}
\left(\frac{\varphi}{\left(1+\frac{1}{\log N}\right)^2} \right)
\leq
\min_{v \in [N]}
Q_{v,t}
\left(\frac{\varphi q_*^{2\ell}}{\left(1+\frac{1}{\log N}\right)^2} 
\right)
+
(q^* )^{\ell}
\]
where the ball radius to find a nice vertex is
\[
\ell=
\left\lceil
\frac{3 \log \log N}{\log \nicefrac{1}{q^*}}
\right\rceil
\implies
(q^* )^{\ell} \leq \frac{1}{\log^{3} N}
\]
and finally as in the proof of Theorem \ref{thm_simultaneous_cutoff}, almost surely
\[
Q_{v,t}
\left(\frac{\varphi q_*^{2\ell}}{\left(1+\frac{1}{\log N}\right)^2} 
\right)
\leq 
\mathcal{D}_{v}(t)
+\frac{1+\frac{1}{\log N}}{
q_*^{\ell}\sqrt{
\varphi 
 N}}.
\]

So to use this all we have to do is lower bound $\min_{\al \in \D}q^{(\al)}_t$. From the proof of Lemma \ref{prop_quad_mu} and Proposition \ref{prop_positive_p}
\[
\mu_q
\leq \mu_p - \frac{\|\epsilon\|_{2,\pi}^2}{1+\|\epsilon\|_\infty} 
\leq \mu_p - \frac{\|\epsilon\|_\infty^2}{(\Delta I)^{\Delta^2}\left(1+\|\epsilon\|_\infty\right)} 
\]

To control a constant error in the sample mean, we use the concentration inequality of Theorem \ref{thm_concentration} with parameters $\delta=\nicefrac{1}{4\Delta}$, $t \sim t_p$, $f(i)=-\log q_i$. By Lemma \ref{prop_max_square_entropy} and the simple observation that $I\geq 2$ forces $q_*\leq \nicefrac{1}{2}$
\[
\log^2 2
\leq
\log^2\frac{1}{q_*}
\leq
\|{ f}\|^2
\]
and so we can set for the small constant $a$
\[
a:=
\left(
\frac{1}{2}\frac{\|\epsilon\|_\infty^2}{(\Delta I)^{\Delta^2}\left(1+\|\epsilon\|_\infty\right)}
\right)
\wedge
\left(
\frac{1}{4\Delta}\frac{\log 2}{3}
\right)
\]
to guarantee the requirement $3a \leq \delta \|f\|$ of Theorem \ref{thm_concentration}. This theorem then controls
\[
1-q^{(\alpha)}_{t-2\ell}(\varphi)=
\mathbb{P}\left(
S_{t-2\ell}
\geq - \log \varphi
\right)
\]
if $- \log \varphi \geq (\mu_q+a)(t-2\ell)$. By construction $\mu_q\leq \mu_p-2a$ and $t-2\ell \sim t_p$ (the latter using our limit on $q^*$ by Proposition \ref{prop_max_prob}), so take
\[
- \log \varphi = (\mu_p-a)t_p
\implies
\varphi
=\frac{1}{N} N^{\nicefrac{a}{\mu_p}}.
\]

Note also
\[
\|{ f}\|^2
 \leq
\left(
\log\frac{1}{q_*}
+
\sqrt{\frac{\Delta\log^2 (I \vee 8)}{q_*}}
\right)^2
 \leq
\frac{2\Delta\log^2 (I \vee 8)}{q_*}
\]
and so at the prescribed value of $\varphi$ we see
\[
\begin{split}
q^{(\alpha)}_{t-2\ell}(\varphi)
&\geq
1-\exp\left(
1-(t-2\ell) q_* \frac{\delta}{14\delta+9}
\frac{a^2}{2\Delta\log^2 (I \vee 8)}
\right)\\
&\geq
1-\frac{2\Delta(14\delta+9)\log^2 (I \vee 8)}{\delta a^2 (t-2\ell) q_*}
=: 1-\frac{C}{(t-2\ell) q_*}.
\end{split}
\]


Tracking the above argument, this implies
\[
\begin{split}
\mathcal{D}_{v}(t)
&\geq
1-\frac{C}{ (t-2\ell) q_*}
-\frac{1+\frac{1}{\log N}}{
q_*^{\ell}\sqrt{
\varphi 
 N}}
 -(q^* )^{\ell}
 -
\frac{2}{\sqrt{\log N}}\\
&\gtrsim
1-\frac{C}{ t_p q_*}
-\frac{1}{
q_*^{\ell}N^{\nicefrac{a}{2\mu_p}} }
 -\frac{1}{\log^{3} N}
 -
\frac{2}{\sqrt{\log N}}\\
&\gtrsim
1- O\left(
\frac{1}{\log \log N}
\right)
-\frac{\left(\theta\log N\right)^{\ell}}{
N^{\nicefrac{a}{2\mu_p}} }
 -
\frac{2}{\sqrt{\log N}}.\\
\end{split}
\]
\end{proof}

\begin{claim}
Every $q \in \mathscr{P}$ with $\|\epsilon\|_{\infty}\geq \theta^2$ and some set $[J] \ni j$ having $q_j\leq \nicefrac{\theta \log \log N}{\log N}$ has $\mathcal{D}(t) = 1-O(\theta)$ at $t \sim t_p$, with high probability.
\end{claim}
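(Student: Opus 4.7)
We argue that when one coordinate $q_j$ is tiny, the $q$-walk has entropy rate strictly smaller than $\mu_p$ by a positive constant; consequently, at the time $t_p = \log N/\mu_p$ the walker's path probability is typically $\gg 1/N$ and it is concentrated on a small fraction of the vertex set, giving $\mathcal{D}(t)$ close to one. For the entropy gap, Proposition \ref{prop_positive_p} ensures $p_i \geq (\Delta I)^{-\Delta^2}$ for every $i$, so $\|\epsilon\|_\infty$ is uniformly bounded by the same constant, while the hypothesis $q_j \leq \theta\log\log N/\log N$ forces $\epsilon_j \geq 1/2$ for $N$ large. Then $\|\epsilon\|_{2,\pi}^2 \geq \pi_p(j)\epsilon_j^2 \geq p_j/4$ is a positive constant, and Lemma \ref{prop_quad_mu} yields $\mu_p - \mu_q \geq \eta > 0$.

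To concentrate the walker's log path weight $S_t := -\sum_{k=1}^t \log q_{i_k}$ we truncate, since $-\log q_i$ may be unbounded. Let $K := \log(\log N/(\theta\log\log N)) = O(\log\log N)$ and $\tilde f(i) := \min(-\log q_i, K)$, so $\|\tilde f\|_\infty \leq K$. The sup norm satisfies Assumptions \ref{assumptions_kloeckner}, and the Doeblin-type estimate $L_q(i,j) \geq \pi_q(j)/\Delta$ from \eqref{eq_separation} gives $\|L_q f\|_\infty \leq (1 - 1/\Delta)\|f\|_\infty$ for every $\pi_q$-mean-zero $f$. Applying Theorem \ref{thm_concentration} with deviation $a = \eta/3$, the exponent is $\Omega(\log N/(\log\log N)^2)$ and diverges, so $\tilde S_t \leq t(\tilde\mu + a)$ with failure probability $e^{-\Omega(\log N/(\log\log N)^2)}$. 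The truncation error has stationary mean bounded by $t\sum_{q_i < e^{-K}}\pi_q(i)(\log(1/q_i) - K) \leq t \cdot I\Delta e^{-K} K = O(\theta(\log\log N)^2)$, so Markov's inequality delivers $S_t - \tilde S_t \leq (\log\log N)^3$ with probability $1 - O(\theta)$. Since $\tilde\mu \leq \mu_q$, combining yields $S_t \leq (1 - c')\log N$ with probability $1 - O(\theta)$, for a positive constant $c' = 2\eta/(3\mu_p)$.

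To close the argument, the walker's path probability at time $t$ is at least $\exp(-S_t) \geq N^{-1 + c'}$ on this event, so $Q_{v,t}(N^{-1+c'}) \geq 1 - O(\theta)$ on the high-probability graph event of Proposition \ref{prop_Qbar_and_q}. Applying the lower-bound chain used in the proof of Theorem \ref{thm_simultaneous_cutoff} (Lemma \ref{lemma_rerooting} to relocate the start onto a nice vertex, then the standard $\mathcal{D}_v(t) \geq Q_{v,t}(\varphi) - 1/\sqrt{N\varphi}$ with $\varphi = N^{-1+c'}$, whose correction term is $N^{-c'/2}$) gives $\mathcal{D}(t) \geq \mathcal{D}_v(t) \geq 1 - O(\theta)$ for any nice $v$.

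The main obstacle is that using Theorem \ref{thm_concentration} under the norm of Corollary \ref{cor_contraction} is hopeless here, since that norm's parameter $c = 1/\sqrt{q_*}$ blows up and the resulting $\|f\|^2$ destroys the concentration exponent. Replacing this norm by $\|\cdot\|_\infty$ preserves the contraction rate $\Omega(1/\Delta)$ via Doeblin, and simultaneously truncating $-\log q$ at the threshold $K = \log(\log N/(\theta\log\log N))$ ensures both a diverging concentration exponent $\Omega(\log N/(\log\log N)^2)$ and a vanishing expected truncation error $O(\theta(\log\log N)^2)$, both uniformly in how small $q_*$ is.
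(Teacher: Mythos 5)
Your treatment of the idealised layer chain is sound and takes a genuinely different (and cleaner) route than the paper: where the paper splits $[J]$ into $J_1,J_2$, bounds the number of $J_2$-visits by a Chernoff bound, and then applies Theorem \ref{thm_concentration} to a \emph{partially observed} chain on $[I]\setminus[J]$ with a perturbed Doeblin constant, you instead truncate $-\log q$ at $K=O(\log\log N)$, switch to the sup norm (where the Doeblin bound \eqref{eq_separation} gives contraction $1-\nicefrac{1}{\Delta}$ without the problematic $c=\nicefrac{1}{\sqrt{q_*}}$), and control the truncation error by Markov. The entropy gap $\mu_p-\mu_q\geq\eta>0$ is obtained exactly as in the paper's Claim 3, though you should cite the \emph{proof} of Lemma \ref{prop_quad_mu} rather than its statement: the hypothesis $\|\epsilon\|_\infty\leq\tfrac12$ is violated here (indeed you use $\epsilon_j\geq\tfrac12$), and one needs the Lagrange-remainder form $\mu_q\leq\mu_p-\|\epsilon\|_{2,\pi}^2/(2(1+\|\epsilon\|_\infty))$ with $\|\epsilon\|_\infty\leq(\Delta I)^{\Delta^2}-1$.

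The genuine gap is in the transfer from the layer chain to the network. Proposition \ref{prop_Qbar_and_q} controls $\bar Q_{v,t}$, not $Q_{v,t}$, and the ``lower-bound chain'' you invoke from Theorem \ref{thm_simultaneous_cutoff} converts between them via
\[
\min_{v}Q_{v,t}(\varphi)\geq\min_{v\in V}\bar Q_{v,t-2\ell}\bigl(\varphi\, q_*^{-2\ell}\bigr)-(q^*)^{\ell},
\]
i.e.\ the threshold is inflated by $q_*^{-2\ell}$ to account for the worst-case weight of the first $2\ell$ steps. In Claim 3 this is harmless because $q_*\geq\theta\log\log N/\log N$ forces $q_*^{-2\ell}=e^{O((\log\log N)^2)}$; but under the present hypothesis $q_j$ may be $N^{-100}$ or exactly $0$, so $\varphi\,q_*^{-2\ell}$ exceeds $1$ and the lower bound is vacuous. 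This is precisely why the paper's Claim 4 ``upgrades the rerooting argument to discard edges in $J_1$'': by Markov's inequality at most $\Delta I\theta\ell/\log N$ of the $\ell$-ball weight lies on paths using a $J_1$-edge, so $q_*^{-2\ell}$ can be replaced by $(\theta/\log N)^{-2\ell}$ at the cost of an extra $O(\theta)$ error. Your proof never addresses this, and your closing line ``$\mathcal{D}_v(t)\geq Q_{v,t}(\varphi)-1/\sqrt{N\varphi}$ for any nice $v$'' presupposes a per-vertex bound on $Q_{v,t}$ that the stated machinery does not deliver in this regime. (A cheap repair exists: since $\mathcal{D}(t)=\max_w\mathcal{D}_w(t)$ and $\bar Q_{v,t}(\varphi)$ is a $[P^\ell]_{v\to\cdot}$-average of $Q_{w,t}(\varphi)$, one has $\mathcal{D}(t)\geq\bar Q_{v,t}(\varphi)-1/\sqrt{N\varphi}$ directly for any $v\in V$, with no $q_*$ correction — but that is not the argument you wrote.)
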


\begin{proof}[Proof of claim]
The layer chain, regardless of its state, has the rate into $J$ bounded by $\Delta\sum_{j=1}^J q_j$. First note that for
\[
J_1:=\left\{j \in [J] : q_j\leq \frac{\theta }{\log N} \right\}
\]
we have, in a path to time $t \sim t_p$,
\[
\begin{split}
\p\left( J_1 \text{ ever visited} \right)
&\leq
\p \left( \operatorname{Bin} \left( t, \frac{\theta\Delta I}{\log N}\right)
\neq 0 \right)
=
O(\theta)
\end{split}
\]
and so these very small edge probabilities are not significant.

Similarly for the larger small probabilities we have Chernoff bound
\[
\begin{split}
\p\left( \frac{1}{\theta} \log \log N \text{ visits to } J_2\right)
&\leq
\p \left( \operatorname{Bin} \left( t, \frac{\theta\Delta I \log \log N}{\log N}\right)
\geq \frac{1}{\theta} \log \log N \right)\\
&\leq
\exp\left(
- \Omega\left( \frac{1}{\theta} \right)  \log \log N
\right).
\end{split}
\]

This gives a bound on the contribution to the $\log$ weight of the sample path determining $q^{(\alpha)}_{t-2\ell}(\varphi)$ by those $J_2$ states of
\[
\frac{1}{\theta} \log \log N \cdot \log \frac{\log N}{\theta}
\lesssim \frac{1}{\theta} \log^2 \log N.
\]

That is, a factor $e^{-O(\nicefrac{1}{\theta})\log^2 \log N}$ change in the path weight, with error probability (for the idealised layer chain path) $e^{-\Omega(\nicefrac{1}{\theta})\log \log N}.$
 
We can then upgrade the rerooting argument to discard  edges in $J_1$
\[
\min_{v \in [N]}
Q_{v,t}(\varphi)
\geq
\min_{v \in V}
\bar{Q}_{v,t-2\ell}(\varphi (\nicefrac{\theta}{\log N})^{-2\ell})
-
(\nicefrac{\theta}{\log N})^{\ell}
- 
\frac{\Delta I \theta\ell}{\log N}
\]
as by Markov's inequality at most $\nicefrac{\Delta I \theta\ell}{\log N}$ of the $\ell$-ball weight from any point goes to paths which feature a type $J_1$ edge.

As in the previous case (and with the same $\varphi
=\frac{1}{N} N^{\nicefrac{a}{\mu_p}} $), we still have
\[
\begin{split}
\mathcal{D}_{v}(t)
&\geq
\min_\alpha
q^{(\alpha)}_{t-2\ell}(\varphi)
-\frac{1+\frac{1}{\log N}}{
(\nicefrac{\theta}{\log N})^{\ell}\sqrt{
\varphi 
 N}}
 -(\nicefrac{\theta}{\log N})^{\ell}
 -\frac{\Delta I \theta\ell}{\log N}
 -
\frac{2}{\sqrt{\log N}}\\
&\gtrsim
\min_\alpha
\tilde{q}^{(\alpha)}_{t-2\ell- \theta^{-1} \log \log N}\left(\varphi e^{-O(\nicefrac{1}{\theta})\log^2 \log N} \right)\\
&\hspace{8em}-e^{-\Omega(\nicefrac{1}{\theta})\log \log N}
-\frac{1}{
(\nicefrac{\theta}{\log N})^{\ell}\sqrt{
\varphi 
 N}}
 -\frac{1}{\log^3 N}
 -
\frac{2}{\sqrt{\log N}}\\
&\gtrsim
\min_\alpha
\tilde{q}^{(\alpha)}_{t-2\ell- \theta^{-1} \log \log N}\left(\varphi e^{-O(\nicefrac{1}{\theta})\log^2 \log N} \right)
 -
\frac{2}{\sqrt{\log N}}
\end{split}
\]
where we now deal with the $J$ states separately and consider $\tilde{q}^{(\alpha)}_{t-2\ell- \theta^{-1} \log \log N}(\varphi)$ for the \emph{partially observed} chain on $[I]\setminus[J]$.
This partially observed chain $\tilde{L}$ has
\[
\forall i,k \notin [J] \quad
L_{ik} \leq
\tilde{L}_{ik} \leq L_{ik}+ \frac{\frac{\Delta I \theta \log \log N }{\log N} }{1-\frac{\Delta I \theta \log \log N}{\log N}},
\]
\[
\forall i \notin [J] \quad
\pi(i) \leq
\tilde{\pi}(i)
\leq \frac{\pi(i)}{1-\frac{\Delta I \theta \log \log N}{\log N} },
\]
and so modifies the mixing parameter of Corollary \ref{cor_contraction}
\[
\tilde{\delta}=\frac{1}{4\Delta}-O\left(\frac{\log \log N}{\log N}\right) \geq \frac{1}{5 \Delta}
\]
and we can again apply Theorem \ref{thm_concentration} to the partially observed chain, which now has $ \min_{i \notin [J]} q_i \log N \geq \theta \log \log N$ as in the previous case.
\end{proof}

%
%
%

Claims 1--4 of this proof cover all cases for the vector $q$ apart from very large $q^*$ which was excluded before the first claim, and so by taking $\theta$ arbitrarily small we have proved Theorem \ref{thm_optimiser_cutoff}.
\end{proof}

\begin{proof}[Proof of Proposition \ref{prop_bad_set_control}]
Claims 3 and 4 of the previous proof give 
$\mathcal{D}(t) = 1-O(\theta)$ 
maximally over the set $\|\epsilon\|_{\infty}\geq \theta^2$.

However $p \notin \Pab$ requires $p_* \rightarrow 0$ and so $\epsilon\rightarrow 1$. By setting $\theta=\Theta(\delta)$ we can deduce
\[
\sup_{p \notin \Pab}
\mathcal{D}(t)
\geq
1-\delta.
\]
\end{proof}

{\bf Acknowledgements.}
This research was supported by NRDI grant KKP 137490.


\printbibliography

\begin{appendices}
\noappendicestocpagenum

\section{Appendix}\label{sec_appendix}

\begin{proposition}\label{prop_Qq_approx}
For any $t \leq \log^{\nicefrac{4}{3}} N$ and $p \in \mathscr{P}$, almost surely
\[
(1-\nicefrac{1}{\log N})
Q^{({ r}(p))}_{v,t}\left((1+\nicefrac{1}{\log N}) \varphi\right)
\leq
Q^{(p)}_{v,t}(\varphi)
\leq
(1+\nicefrac{1}{\log N})
Q^{({ r}(p))}_{v,t}\left((1-\nicefrac{1}{\log N}) \varphi\right)
\]
\[
(1-\nicefrac{1}{\log N})
Q^{(p)}_{v,t}\left((1+\nicefrac{1}{\log N}) \varphi\right)
\leq
Q^{({ r}(p))}_{v,t}(\varphi)
\leq
(1+\nicefrac{1}{\log N})
Q^{(p)}_{v,t}\left((1-\nicefrac{1}{\log N}) \varphi\right)
\]
and the same bounds hold deterministically for $q^{(\al)}$ for each $\al \in \D$.
\end{proposition}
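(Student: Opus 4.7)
The plan is to reduce everything to the pointwise estimate on the quantiser error from Remark \ref{remark_rel_error}, namely $\|1-{ r}(p)/p\|_\infty \leq \epsilon$ with $\epsilon \leq A^{-1}\log^{-\nicefrac{8}{3}}N$. For any path of length $t \leq \log^{\nicefrac{4}{3}} N$, the per-step weight under ${ r}(p)$ differs from that under $p$ by a factor in $[1-\epsilon,1+\epsilon]$, so the full weight ratio lies in $[(1-\epsilon)^t, (1+\epsilon)^t]$. Since $t\epsilon\leq A^{-1}\log^{-\nicefrac{4}{3}}N = o(\log^{-1} N)$, a Taylor bound gives
\[
(1+\epsilon)^t \leq e^{t\epsilon} \leq 1 + 2t\epsilon < 1+\nicefrac{1}{\log N}, \qquad (1-\epsilon)^t \geq 1 - \nicefrac{1}{\log N}
\]
for $N$ large, which is the key quantitative input for the whole proposition.

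For the bounds on $Q$, I would just integrate this pointwise comparison. Every path $\mathfrak{p}\in\mathfrak{P}^t_{v,w}$ satisfies
\[
(1-\nicefrac{1}{\log N})\,\mathbf{w}^{(p)}(\mathfrak{p}) \leq \mathbf{w}^{({ r}(p))}(\mathfrak{p}) \leq (1+\nicefrac{1}{\log N})\,\mathbf{w}^{(p)}(\mathfrak{p}).
\]
Hence $\{\mathbf{w}^{(p)}(\mathfrak{p})>\varphi\} \subseteq \{\mathbf{w}^{({ r}(p))}(\mathfrak{p})>(1-\nicefrac{1}{\log N})\varphi\}$, and summing the pointwise upper bound on the weight across this set gives
\[
Q^{(p)}_{v,t}(\varphi) \leq (1+\nicefrac{1}{\log N})\,Q^{({ r}(p))}_{v,t}\big((1-\nicefrac{1}{\log N})\varphi\big).
\]
The lower bound comes from the symmetric inclusion $\{\mathbf{w}^{({ r}(p))}>(1+\nicefrac{1}{\log N})\varphi\} \subseteq \{\mathbf{w}^{(p)}>\varphi\}$ together with the pointwise lower bound on $\mathbf{w}^{(p)}$. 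Swapping the roles of $p$ and ${ r}(p)$ (the error bound $\epsilon$ is symmetric up to a $(1+o(1))$ factor that is absorbed into $\nicefrac{1}{\log N}$) gives the second line of inequalities.

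For the layer-chain version $q^{(\alpha)}$, the only extra observation is that the one-step dynamic
\[
L^{(p)}_{ij}=\frac{\sum_{\al \in \D} k(\al) \al_i^- \al_j^+ p_j}{\sum_{\al \in \D} k(\al) \al_i^-}
\]
is \emph{linear} in the target coordinate $p_j$, so the ratio $L^{({ r}(p))}_{ij}/L^{(p)}_{ij}={ r}(p)_j/p_j$ lies in $[1-\epsilon,1+\epsilon]$. Together with the initial weight $\alpha_i^+ p_i$ this means that for any layer sequence $\vec{\imath}=(i_1,\dots,i_t)$
\[
\frac{\mathbb{P}^{({ r}(p))}(M_{1:t}=\vec{\imath})}{\mathbb{P}^{(p)}(M_{1:t}=\vec{\imath})} = \prod_{k=1}^{t}\frac{{ r}(p)_{i_k}}{p_{i_k}} \in [(1-\epsilon)^t,(1+\epsilon)^t],
\]
exactly the same ratio as the path weights. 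So the argument above transfers verbatim, replacing the sum $\sum_\mathfrak{p}\mathbf{w}(\mathfrak{p})\mathbbm{1}_{\cdot}$ over paths by the sum $\sum_{\vec{\imath}}\mathbb{P}(\vec{\imath})\mathbbm{1}_{\cdot}$ over layer sequences, and gives the deterministic bounds for $q^{(\alpha)}$.

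I do not expect any real obstacle: the routine check is just the $(1\pm\epsilon)^t \subset (1\pm\nicefrac{1}{\log N})$ estimate, which uses the gap between $\log^{\nicefrac{4}{3}}N$ and $\log^{\nicefrac{8}{3}}N$ built into the size of the grid. The mild point to notice is the linearity of $L$ in $p$, which guarantees that the layer-chain bound is truly deterministic (no extra Markov chain perturbation analysis needed, in contrast to Proposition \ref{prop_sigma_approx}).
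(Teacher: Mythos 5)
Your proposal is correct and follows essentially the same route as the paper: bound the per-path weight ratio by $(1\pm\epsilon)^t$ via Remark \ref{remark_rel_error}, note $\epsilon t \ll \nicefrac{1}{\log N}$, and transfer the comparison to $Q$ and $q^{(\alpha)}$ through the sum/indicator structure. The only difference is that you make explicit the linearity of $L_{ij}$ in $p_j$ (so the layer-sequence probabilities pick up exactly the same product of ratios), which the paper leaves implicit.
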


\begin{proof}
As we noted in Remark \ref{remark_rel_error}, $\epsilon=\| 1-{ r}(p)/p  \|_\infty \leq \tfrac{1}{A}\log^{\nicefrac{-8}{3}}N$ and so we can control the relative error in the path weights
\[
(1-\epsilon)^t
\leq
\frac{\prod_{k=1}^{t} { r}(p)_{i_k}}{\prod_{k=1}^{t} p_{i_k}}
\leq (1+\epsilon)^t.
\]

From the expression for $Q$
\[
Q_{v,t}(\varphi)=
\sum_{w=1}^N
\sum_{\mathfrak{p}\in \mathcal{P}^t_{v,w}}
\mathbf{w}(\mathfrak{p})
\mathbbm{1}_{\mathbf{w}(\mathfrak{p})>\varphi}
\]
and the very similar version for $q$ (where $(M_k)_k$ is the layer chain \eqref{eq_layer_chain} from $\al$)
\[
q^{(\alpha)}_t(\varphi)=
\mathbb{P}\left(
\prod_{k=1}^{t} 
p_{M_k}
>\varphi
\right)
\]
we see
\[
(1-\epsilon)^t
Q^{({ r}(p))}_{v,t}\left((1+\epsilon)^t \varphi\right)
\leq
Q^{(p)}_{v,t}(\varphi)
\leq
(1+\epsilon)^t
Q^{({ r}(p))}_{v,t}\left((1-\epsilon)^t \varphi\right)
\]
which gives the claimed result as $\epsilon t \ll \nicefrac{1}{\log N}$.
\end{proof}

\begin{lemma}\label{lemma_banach}
The norm $\|\cdot\|$ forms a Banach algebra on $\mathbb{R}^n$, i.e. for every pair of vectors $f,g$ the pointwise product satisfies $\|fg\|\leq\|f\|\|g\|$.
\end{lemma}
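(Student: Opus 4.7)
The plan is to verify directly that both pieces of the norm $\|f\|=\|f\|_\infty+c\sqrt{\operatorname{Var}_\pi f}$ respect the product, with a small slack coming from the cross term in the expansion of $\|f\|\|g\|$.

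First I would handle the supremum part immediately: $\|fg\|_\infty\leq \|f\|_\infty\|g\|_\infty$ is pointwise. The real work is in controlling $\sqrt{\operatorname{Var}_\pi(fg)}$. My approach is to recall that $\sqrt{\operatorname{Var}_\pi X}=\inf_{c\in\mathbb{R}}\|X-c\|_{2,\pi}$, so taking the specific choice $c=\bar f\bar g$ (with $\bar f=\pi(f)$, $\bar g=\pi(g)$) and writing
\[
fg-\bar f\bar g = f(g-\bar g)+\bar g(f-\bar f),
\]
the triangle inequality in $L^2(\pi)$ gives
\[
\sqrt{\operatorname{Var}_\pi(fg)}\leq \|f(g-\bar g)\|_{2,\pi}+|\bar g|\,\|f-\bar f\|_{2,\pi}\leq \|f\|_\infty\sqrt{\operatorname{Var}_\pi g}+\|g\|_\infty\sqrt{\operatorname{Var}_\pi f},
\]
where the last step uses $\|h k\|_{2,\pi}\leq \|h\|_\infty\|k\|_{2,\pi}$ and $|\bar g|\leq \|g\|_\infty$.

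Combining, I would then write
\[
\|fg\|\leq \|f\|_\infty\|g\|_\infty+c\|f\|_\infty\sqrt{\operatorname{Var}_\pi g}+c\|g\|_\infty\sqrt{\operatorname{Var}_\pi f},
\]
and compare with the full expansion
\[
\|f\|\|g\|=\|f\|_\infty\|g\|_\infty+c\|f\|_\infty\sqrt{\operatorname{Var}_\pi g}+c\|g\|_\infty\sqrt{\operatorname{Var}_\pi f}+c^2\sqrt{\operatorname{Var}_\pi f}\sqrt{\operatorname{Var}_\pi g}.
\]
Since the missing term $c^2\sqrt{\operatorname{Var}_\pi f}\sqrt{\operatorname{Var}_\pi g}$ is nonnegative, the submultiplicativity $\|fg\|\leq \|f\|\|g\|$ follows.

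No step looks genuinely difficult, but if there is a subtle point it is the decision to centre the $L^2$ bound at $\bar f\bar g$ rather than at $\pi(fg)$; centring at $\bar f\bar g$ is what cleanly isolates the two marginal variances without picking up a covariance term, so it is worth flagging that as the key choice. The remaining Banach algebra axioms of Assumptions \ref{assumptions_kloeckner} (namely $\|f\|\geq \|f\|_\infty$ and $\|1\|=1$) are immediate from the definition, so they need only be remarked on.
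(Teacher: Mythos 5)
Your proof is correct, and it reaches the paper's key intermediate inequality
\[
\sqrt{\operatorname{Var}_\pi(fg)}\;\leq\;\|f\|_\infty \sqrt{\operatorname{Var}_\pi(g)} + \|g\|_\infty \sqrt{\operatorname{Var}_\pi(f)}
\]
by a genuinely different computation. The paper writes the variance as the symmetrised double sum $\operatorname{Var}_\pi(fg)=\tfrac{1}{2}\sum_{i,j}(fg(i)-fg(j))^2\pi(i)\pi(j)$, applies the pointwise Leibniz-type bound $|fg(i)-fg(j)|\leq \|f\|_\infty|g(i)-g(j)|+\|g\|_\infty|f(i)-f(j)|$, expands the square, and controls the cross term by Cauchy--Schwarz in $L^2(\pi\otimes\pi)$. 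You instead use the variational characterisation $\sqrt{\operatorname{Var}_\pi X}=\inf_c\|X-c\|_{2,\pi}$, centre at $\bar f\bar g$, split $fg-\bar f\bar g=f(g-\bar g)+\bar g(f-\bar f)$, and finish with the triangle inequality in $L^2(\pi)$. Your route is arguably slightly cleaner: it avoids expanding a square and the product-measure Cauchy--Schwarz step, at the cost of the one non-obvious choice you correctly flag (centring at $\bar f\bar g$ rather than at $\pi(fg)$, which is what prevents a covariance term from appearing). The final assembly is also handled differently but equivalently: the paper delegates the reduction from submultiplicativity of $\|\cdot\|$ to the displayed inequality to \cite[Remark 2.6]{kloeckner2019}, whereas you carry it out explicitly by expanding $\|f\|\|g\|$ and discarding the nonnegative term $c^2\sqrt{\operatorname{Var}_\pi f}\sqrt{\operatorname{Var}_\pi g}$ (which is valid since $c=\nicefrac{1}{\sqrt{p_*}}>0$). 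Both arguments are complete and elementary; there is nothing missing in yours.
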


\begin{proof}
As noted in \cite[Remark 2.6]{kloeckner2019}, it is sufficient to prove
\[
\sqrt{\operatorname{Var}_\pi(fg)}\leq \|f\|_\infty \sqrt{\operatorname{Var}_\pi(g)} + \|g\|_\infty \sqrt{\operatorname{Var}_\pi(f)}
\]
for which we first note
\[
\begin{split}
|f(i)g(i)-f(j)g(j)|
&=|f(i)(g(i)-g(j))+g(j)(f(i)-f(j))|\\
&\leq \|f\|_\infty |g(i)-g(j)| + \|g\|_\infty |f(i)-f(j)|.
\end{split}
\]

Then, we use this to expand a slightly unusual expression for the variance
\[
\begin{split}
\operatorname{Var}_\pi(fg)&=\frac{1}{2}\sum_{i=1}^n \sum_{j=1}^n (fg(i)-fg(j))^2\pi(i)\pi(j)\\
&\leq \|f\|_\infty^2 \operatorname{Var}_\pi(g) +\|g\|_\infty^2 \operatorname{Var}_\pi(f) 
+\|f\|_\infty\|g\|_\infty\sum_{i=1}^n \sum_{j=1}^n |f(i)-f(j)| |g(i)-g(j)|\pi(i)\pi(j)\\
&\leq \|f\|_\infty^2 \operatorname{Var}_\pi(g) +\|g\|_\infty^2 \operatorname{Var}_\pi(f) 
+\|f\|_\infty\|g\|_\infty\sqrt{4\operatorname{Var}_\pi(f)\operatorname{Var}_\pi(g)}
\end{split}
\]
where the final line uses the Cauchy-Schwarz inequality in $L^2(\pi\otimes\pi)$.
\end{proof}

\begin{lemma}\label{prop_max_entropy}
\[
\mu \leq \log(\Delta I).
\]
\end{lemma}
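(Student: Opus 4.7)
The plan is to apply Jensen's inequality to the concave function $\log(\cdot)$ with respect to the stationary layer distribution $\pi$. Writing out the definition,
\[
\mu = \sum_{i \in [I]} \pi(i) \log \frac{1}{p_i} \leq \log \sum_{i \in [I]} \frac{\pi(i)}{p_i},
\]
so it suffices to show $\sum_i \pi(i)/p_i \leq \Delta I$.

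Next I would substitute the explicit formula $\pi(i) = \sum_{\alpha \in \mathcal{D}} \frac{k(\alpha)}{N} \alpha_i^+ p_i$, which causes the $p_i$ in the denominator to cancel cleanly:
\[
\sum_{i \in [I]} \frac{\pi(i)}{p_i} = \sum_{i \in [I]} \sum_{\alpha \in \mathcal{D}} \frac{k(\alpha)}{N} \alpha_i^+ = \sum_{\alpha \in \mathcal{D}} \frac{k(\alpha)}{N} \sum_{i \in [I]} \alpha_i^+.
\]
Since each $\alpha_i^+ \leq \Delta$ (recall $\alpha \in [\Delta]^{2I}$) and there are $I$ layers, the inner sum is at most $\Delta I$, and the outer weights sum to $\sum_\alpha k(\alpha)/N = 1$, giving the desired bound.

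There is no real obstacle here; the only thing to be careful about is that $\pi$ is a genuine probability distribution on $[I]$ so that Jensen applies, which follows from the double-stochasticity assumption $\sum_i p_i \alpha_i^+ = 1$ for each $\alpha$. That identity gives $\sum_i \pi(i) = \sum_\alpha \frac{k(\alpha)}{N} \sum_i p_i \alpha_i^+ = 1$, so the application of Jensen is valid and the proof is essentially a one-line computation after that.
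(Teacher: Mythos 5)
Your proof is correct, and it takes a slightly different route from the paper's. You apply Jensen's inequality directly to $\mu=\mathbb{E}_\pi\log\frac{1}{p_i}$ and exploit the clean cancellation $\pi(i)/p_i=\sum_{\al\in\D}\frac{k(\al)}{N}\al_i^+$, which immediately gives $\sum_i \pi(i)/p_i\leq \Delta I$; your check that $\pi$ is a probability measure is the right thing to verify and is exactly the stochasticity constraint. The paper instead decomposes $\log\frac{1}{p_i}=\log\frac{1}{\al_i^+p_i}+\log\al_i^+$, recognising for each type $\al$ that $(\al_i^+p_i)_i$ is a probability distribution on $[I]$ whose Shannon entropy is at most $\log I$, while the second term is at most $\log\Delta$ pointwise. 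The two arguments are of comparable length (and the entropy bound is itself a Jensen argument in disguise), but the paper's decomposition makes visible where each factor comes from --- $\log I$ from the entropy of an $I$-point jump distribution and $\log\Delta$ from the degree bound --- whereas yours is a single-shot convexity estimate. Either is a complete proof of the lemma.
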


\begin{proof}
\[
\begin{split}
\mu
&=
\sum_{\al \in \D} \frac{k(\al)}{N} \sum_{i=1}^I \al_i^+ p_i \log \frac{1}{p_i \al_i^+}
+\sum_{\al \in \D} \frac{k(\al)}{N} \sum_{i=1}^I \al_i^+ p_i \log \al_i^+\\
&\leq \log I + \log \Delta
\end{split}
\]
\end{proof}

\begin{lemma}\label{prop_max_square_entropy}
For any probability distribution $q_i$ on $[m]$ we have
\[
\sum_{i=1}^m q_i \log^2 q_i \leq 4 \vee \log^2 m
= \log^2 (m \vee 8).
\]
\end{lemma}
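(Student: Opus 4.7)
The plan is to compute the maximum of $F(q) := \sum_{i=1}^m q_i \log^2 q_i$ over probability vectors $q$ on $[m]$ via Lagrange multipliers. For all but the smallest $m$, this maximum will turn out to be $\log^2 m$, attained at the uniform distribution, and the bound $\log^2(m \vee 8)$ then follows immediately.

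First I would differentiate $q \log^2 q$ to obtain $\log^2 q + 2 \log q$ and write the interior optimality condition as
\[
(-\log q_i - 1)^2 = 1 + \lambda \qquad \forall\, i \text{ with } q_i > 0.
\]
Hence $-\log q_i \in \{1-s,\,1+s\}$ with $s = \sqrt{1+\lambda} \geq 0$: at any interior critical point of $F$ on the simplex, the coordinates take at most two distinct values $a = e^{-(1-s)}$ and $b = e^{-(1+s)}$.

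Next I would rule out non-uniform interior critical points for $m \geq 3$. If $k$ coordinates equal $a$ and $m-k$ equal $b$ with $1 \leq k \leq m-1$, then $\sum q_i = 1$ becomes $k e^s + (m-k) e^{-s} = e$; writing $u = e^s$ yields $k u^2 - e u + (m-k) = 0$ with discriminant $e^2 - 4k(m-k)$. A real positive solution thus requires $k(m-k) \leq e^2/4 < 2$, whereas $\min_{1 \leq k \leq m-1} k(m-k) = m-1 \geq 2$ as soon as $m \geq 3$. So for $m \geq 3$ the only interior critical point is the uniform distribution $q \equiv 1/m$, where $F = \log^2 m$.

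Finally I would wrap up by compactness and induction. $F$ attains its maximum on the compact simplex either at this interior critical point or on a boundary face where some $q_i = 0$, which is just the same problem at support size $m - 1$. The trivial pointwise bound $q \log^2 q \leq \max_{x \in [0,1]} x \log^2 x = 4/e^2$ handles the base cases $m \in \{1,2\}$ with $F \leq 8/e^2 < \log^2 8$; since $\log^2 3 > 8/e^2$ and $\log^2 m$ is monotone increasing, a simple induction yields $\max F = \log^2 m$ for every $m \geq 3$. Since $\log^2 m \leq \log^2(m \vee 8)$ holds for all $m \geq 1$, the lemma follows. The only real content is the discriminant arithmetic $k(m-k) > e^2/4$ for $m \geq 3$; everything else is routine.
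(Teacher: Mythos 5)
Your proof is correct, and its core is the same as the paper's: a Lagrange multiplier computation showing that at any interior critical point the coordinates take at most two values $a,b$ with $ab=e^{-2}$ (your parametrisation $a=e^{s-1}$, $b=e^{-s-1}$ encodes exactly this relation). The two arguments diverge only in how the genuinely two-valued case is dispatched. The paper keeps such critical points and observes that $ab=e^{-2}$ with $a,b\le 1$ forces every coordinate to be at least $e^{-2}$, whence $\sum_i q_i\log^2 q_i\le 4$ and this case can only arise for $m\le 7$; this yields the stated bound $4\vee\log^2 m$. You instead eliminate two-valued interior critical points outright for $m\ge 3$ via the discriminant of $ku^2-eu+(m-k)=0$, using $k(m-k)\ge m-1\ge 2>e^2/4$, and cover $m\le 2$ with the pointwise bound $q\log^2 q\le 4/e^2$. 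Your route is marginally sharper --- it identifies $\max F=\log^2 m$ exactly for all $m\ge 3$, where the paper settles for the constant $4$ when $3\le m\le 7$ --- and it makes the boundary recursion (coordinates equal to zero reduce to support size $m-1$) explicit, which the paper leaves implicit. Both arguments are valid and suffice for the lemma as stated.
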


\begin{proof}
By the Lagrange multiplier we see that an optimiser can take either one value $q_i \equiv q$ or two values $q_i \in \{x,y\}$ satisfying
\[
xy =\frac{1}{e^2}
\implies
x\wedge y \geq \frac{1}{e^2}.
\]

This second condition (which is only possible when $m \leq 7$) then gives
\[
\sum_{i=1}^m q_i \log^2 q_i \leq \log^2 e^2=4
\]
and otherwise the uniform optimiser takes the other value as claimed.
\end{proof}

\begin{lemma}\label{prop_min_general_variance}
Given $A\leq \theta \log^{\nicefrac{1}{3}}N$ for some sufficiently small constant $\theta$, 
any $p \in \Pab$ has
\[
\mathbb{V}{\rm ar}_\pi \log p_i \geq A \log^{-\nicefrac{1}{3}} N.
\]
\end{lemma}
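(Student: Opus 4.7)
The plan is to split on the size of $p_* = \min_i p_i$, using compactness when $p_*$ is bounded away from zero and a quantitative pairwise bound when $p_*$ is small. The workhorse in both regimes is the pairwise expansion of the variance,
\[
\operatorname{Var}_\pi \log p_i \;\geq\; \pi(i)\pi(j)\bigl(\log p_i - \log p_j\bigr)^2,
\]
applied at $i=\argmin p$ and $j=\argmax p$, together with $\pi(k)\geq p_k$ (from the proof of \eqref{eq_pi_lower}) and the consequence $p^*\geq 1/(I\Delta)$ of the constraints in \eqref{eq_p_def}, since $\sum_i p_i\alpha_i^+=1$ with $\alpha_i^+\leq\Delta$ forces $\sum_i p_i\geq 1/\Delta$.

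First I would treat the region $p_*\geq 1/(2I\Delta)$ by compactness. The set $\sP\cap\{p_*\geq 1/(2I\Delta)\}$ is compact, on it $\pi$ is strictly positive on all of $[I]$, and $p\mapsto\operatorname{Var}_\pi\log p_i$ is continuous. It cannot vanish: $\operatorname{Var}_\pi\log p_i=0$ would force all $p_i$ equal, contradicting Assumption \ref{ass_different_outdegree} which rules out constant solutions of \eqref{eq_p_def}. Hence the functional is bounded below on this set by some graph-dependent $\eta>0$, and since $A\log^{-1/3}N\leq\theta$ by hypothesis, choosing $\theta\leq\eta$ handles this case.

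For $p_*<1/(2I\Delta)$ the pairwise bound gives $\pi(i)\geq p_*$, $\pi(j)\geq p^*\geq 1/(I\Delta)$, and $\log(p_j/p_*)\geq\log(1/(I\Delta p_*))$, so it suffices to show
\[
f(p_*):=\frac{p_*}{I\Delta}\Bigl(\log\tfrac{1}{I\Delta p_*}\Bigr)^2 \;\geq\; A\log^{-1/3}N
\]
on $p_*\in[A\log^{-1/3}N,\,1/(2I\Delta)]$. Under the substitution $u=I\Delta p_*$, $f$ reduces to $u(\log u)^2/(I\Delta)^2$, which is unimodal on $(0,1]$ with interior maximum at $u=e^{-2}$, so its minimum on the interval is attained at an endpoint. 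At the upper endpoint $f(1/(2I\Delta))=(\log 2)^2/(2(I\Delta)^2)$ is a fixed positive constant exceeding $A\log^{-1/3}N\leq\theta$ for $\theta$ small. At the lower endpoint the hypothesis is used essentially: $A\leq\theta\log^{1/3}N$ gives $\log(1/(I\Delta p_*))\geq\log(1/(I\Delta\theta))$, so
\[
f(A\log^{-1/3}N)\;\geq\;\frac{A\log^{-1/3}N}{I\Delta}\Bigl(\log\tfrac{1}{I\Delta\theta}\Bigr)^2,
\]
which exceeds $A\log^{-1/3}N$ once $(\log(1/(I\Delta\theta)))^2\geq I\Delta$, i.e.\ $\theta\leq 1/(I\Delta\,e^{\sqrt{I\Delta}})$.

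The only delicate step is the compactness argument in the first case: the constant $\eta$ there is only qualitatively positive and depends on $\D, I, \Delta$ through Assumption \ref{ass_different_outdegree}, very much in the spirit of Corollary \ref{cor_min_variance}. This is harmless since $\theta$ is already permitted to be a graph-dependent constant. Taking $\theta$ below the minimum of $\eta$, $1/(I\Delta e^{\sqrt{I\Delta}})$, and $(\log 2)^2/(2(I\Delta)^2)$ then closes both cases and gives the lemma.
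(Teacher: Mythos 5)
Your proof is correct and, in the regime that actually matters ($p_*$ small), it is essentially the paper's argument: the same pairwise lower bound $\operatorname{Var}_\pi \log p_i \geq \pi(i)\pi(j)(\log p_i-\log p_j)^2 \geq p_* p^* \log^2(p^*/p_*)$, the same observation that $u\log^2 u$ is unimodal so the minimum sits at the endpoints, and the same evaluation at the lower endpoint $p_*=A\log^{-1/3}N$ using $A\log^{-1/3}N\leq\theta$ twice (the paper compresses this into the chain $\frac{p_*}{\Delta}\log^2(p_*\Delta)\geq \frac{A\log^{-1/3}N}{\Delta}\log^2(\theta\Delta)\geq A\log^{-1/3}N$, quoting the endpoint analysis from the proof of Corollary \ref{cor_min_variance}). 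The one genuine divergence is the upper-endpoint/large-$p_*$ regime: the paper reuses the quantitative consequence of Assumption \ref{ass_different_outdegree} (types $\al,\beta$ with $\sum\al^\cdot\geq\sum\beta^\cdot+1$ force $\min p\leq 1/\sum\al^\cdot$ and $\max p\geq 1/\sum\beta^\cdot$, hence an explicit constant lower bound on the variance), whereas you invoke compactness and the qualitative non-vanishing of the variance. Your route is softer and works, but note one caveat you should make explicit: the function you minimise depends on $\pi$, hence on $\D$ and the frequencies $k(\al)/N$, which vary with $N$, so a ``graph-dependent $\eta$'' is a priori $N$-dependent and cannot directly calibrate the constant $\theta$; the paper also advertises uniformity over all admissible configuration models. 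This is repairable — either minimise the $\pi$-free bound $p_ip_j\log^2(p_i/p_j)$ over the finitely many possible $\D\subseteq[\Delta]^{2I}$, or simply use the explicit bound $p^*/p_*\geq \sum\al^\cdot/\sum\beta^\cdot\geq 1+\tfrac{1}{\Delta I}$ from Assumption \ref{ass_different_outdegree} as the paper does, which makes your case~1 quantitative with $\eta=\tfrac{1}{2(I\Delta)^2}\log^2(1+\tfrac{1}{\Delta I})$ — but as written the compactness step leaves this uniformity implicit.
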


\begin{proof}
By a calculation in the proof of Corollary \ref{cor_min_variance}
\[
\begin{split}
\mathbb{V}{\rm ar}_\pi \log p_i 
&\geq
\frac{1}{\Delta^2} \log^2 \left( \frac{\Delta-1}{\Delta}  \right)
\wedge
\frac{p_*}{\Delta} \log^2 \left( p_* \Delta \right)\\
&\geq
\frac{A \log^{-\nicefrac{1}{3}} N}{\Delta} \log^2 \left( \frac{A \Delta }{\log^{\nicefrac{1}{3}} N} \right)
\geq
\frac{A \log^{-\nicefrac{1}{3}} N}{\Delta} \log^2 \left( \theta \Delta  \right)
\geq
A \log^{-\nicefrac{1}{3}} N
\end{split}
\]
using also in two places that $\theta$ is sufficiently small.
\end{proof}

%
%
%

\begin{lemma}\label{lemma_covariance_and_separation}
\[
\mathbb{C}{\rm ov}\left(
f(X_0),f(X_t)
\right)
\leq
s(t) \mathbb{V}{\rm ar}_\pi\left(
f
\right)
\]
\end{lemma}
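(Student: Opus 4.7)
The plan is to prove this in the stationary regime (where the covariance expression arises in the preceding proof of Lemma~\ref{lemma_dynamical_variance_bounds}), using the integral-operator identity
\[
\mathbb{C}{\rm ov}\bigl(f(X_0),f(X_t)\bigr)=\langle g, P^t g\rangle_\pi, \qquad g:=f-\pi(f),
\]
where $\langle\cdot,\cdot\rangle_\pi$ is the $\pi$-weighted inner product. The key idea is to use a \emph{separation decomposition} of the transition kernel $P^t$ so that the part which mixes drops out entirely and only a contribution weighted by $s(t)$ remains.

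First I would recall that the definition of separation distance gives, pointwise,
\[
P^t(x,y)\;\geq\;(1-s(t))\,\pi(y)\quad\text{for all }x,y,
\]
which permits the decomposition
\[
P^t=(1-s(t))\,\Pi+s(t)\,Q,
\]
where $\Pi$ is the rank-one matrix with every row equal to $\pi$, and $Q$ is the residual, which one checks directly is a stochastic matrix (rows non-negative and summing to $1$). Left-multiplying by $\pi$ and using $\pi P^t=\pi$ and $\pi\Pi=\pi$ also yields $\pi Q=\pi$; this invariance is the crucial property.

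Next, because $\pi(g)=0$, one has $\Pi g\equiv 0$, so
\[
\mathbb{C}{\rm ov}\bigl(f(X_0),f(X_t)\bigr)=\langle g, P^t g\rangle_\pi = s(t)\,\langle g, Qg\rangle_\pi.
\]
It then remains to show $\langle g,Qg\rangle_\pi\leq \operatorname{Var}_\pi(g)=\operatorname{Var}_\pi(f)$. I would accomplish this by the standard AM--GM symmetrisation:
\[
\langle g,Qg\rangle_\pi
=\sum_{x,y}\pi(x)Q(x,y)g(x)g(y)
\leq \tfrac12\sum_{x,y}\pi(x)Q(x,y)\bigl(g(x)^2+g(y)^2\bigr)
=\tfrac12\|g\|_{2,\pi}^2+\tfrac12\|g\|_{2,\pi Q}^2,
\]
and then the identity $\pi Q=\pi$ collapses the right-hand side to $\|g\|_{2,\pi}^2=\operatorname{Var}_\pi(f)$. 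Combining the two displays delivers the claimed inequality.

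The only subtle point is really the invariance $\pi Q=\pi$: without it the symmetrisation step would introduce a different measure and lose the clean variance bound. Everything else is linear algebra on the decomposition, so I do not expect any serious obstacle.
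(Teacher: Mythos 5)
Your proposal is correct and follows essentially the same route as the paper: the paper also writes $P^t=(1-s(t))\Pi+s(t)E$ for a stochastic matrix $E$ preserving $\pi$ (your $Q$), kills the $\Pi$ term using $\pi(g)=0$, and bounds the residual quadratic form by $\operatorname{Var}_\pi(f)$. You merely spell out the AM--GM symmetrisation that the paper leaves implicit in the phrase ``we can bound the final covariance by $\mathbb{V}{\rm ar}_\pi(f)$''.
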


\begin{proof}
Because $P^t_{xy} \geq (1-s(t))\pi(y)$, there exists some stochastic matrix $E$ with
\[
\begin{split}
\mathbb{C}{\rm ov}\left(
f(X_0),f(X_t)
\right)
&=
\sum_{x,y}
f(x)f(y)\pi(x) P^t_{xy} -f(\pi)^2\\
&=
s(t)
\left(
\sum_{x,y}
f(x)f(y)\pi(x) E_{xy} -f(\pi)^2
\right)
\end{split}
\]
but also $\pi$ is stationary for $E$ and so we can bound the final covariance by $\mathbb{V}{\rm ar}_\pi\left(
f
\right)$.
\end{proof}

\begin{lemma}\label{prop_max_variance}
\[
\sigma^2_p \leq
2\Delta^2 \log^2 (I\vee 8).
\]
\end{lemma}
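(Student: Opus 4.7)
My proof plan is to combine three earlier results in sequence. The starting point is Lemma \ref{lemma_dynamical_variance_bounds} applied to the layer chain $L_p$ with the test function $f = \log p$. The hypothesis of that lemma requires the separation bound $s(1) \leq 1 - 1/\Delta$, which is already established in the proof of Corollary \ref{cor_contraction} via the $\pi(j)/\Delta$ lower bound on $L_{ij}$ in \eqref{eq_separation}. This immediately yields
\[
\sigma_p^2 \leq (2\Delta - 1)\operatorname{Var}_\pi(\log p).
\]

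Next I would bound the variance by the raw second moment, and then eliminate the stationary distribution by the elementary observation $\pi(i) \leq \Delta p_i$, which is immediate from $\pi(i) = \sum_\alpha (k(\alpha)/N) \alpha_i^+ p_i$ together with $\alpha_i^+ \leq \Delta$. This reduces the task, at the cost of another factor $\Delta$, to bounding $\sum_i p_i \log^2 p_i$ uniformly over $p$.

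Finally I would invoke Lemma \ref{prop_max_square_entropy} to get $\sum_i p_i \log^2 p_i \leq \log^2(I \vee 8)$. That lemma is stated for probability distributions, whereas $p \in \mathscr{P}$ is only a sub-probability on $[I]$ (since $\sum_i p_i \alpha_i^+ = 1$ with $\alpha_i^+ \geq 1$ forces $\sum_i p_i \leq 1$), but the Lagrangian analysis in its proof extends directly to this relaxed constraint: stationary points on the sub-probability simplex still take at most two values with product $e^{-2}$ or are uniform on $[I]$, and the same case split $I \leq 7$ versus $I \geq 8$ produces the same bound $\log^2(I \vee 8)$. Chaining the three estimates gives $\sigma_p^2 \leq (2\Delta - 1)\Delta \log^2(I \vee 8) \leq 2\Delta^2 \log^2(I \vee 8)$.

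There is no genuine obstacle here; the lemma is essentially a corollary of Lemmas \ref{lemma_dynamical_variance_bounds} and \ref{prop_max_square_entropy} combined with the crude degree-based upper bound on $\pi$. The only minor care needed is in extending Lemma \ref{prop_max_square_entropy} to sub-probability distributions, which is a routine repetition of its Lagrangian argument.
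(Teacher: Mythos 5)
Your proposal is correct and follows essentially the same route as the paper: Lemma \ref{lemma_dynamical_variance_bounds}, then $\operatorname{Var}_\pi \leq \mathbb{E}_\pi(\log^2 p_i) \leq \Delta \sum_i p_i \log^2 p_i$, then Lemma \ref{prop_max_square_entropy}. Your observation that $p$ is only a sub-probability (so Lemma \ref{prop_max_square_entropy} needs the routine extension you describe) is a valid point that the paper's one-line proof passes over silently.
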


\begin{proof}
By Lemma \ref{lemma_dynamical_variance_bounds} and Proposition \ref{prop_max_square_entropy},
\[
\sigma^2_p\leq
(2\Delta -1)
\mathbb{E}_\pi(\log^2 p_i)\leq
(2\Delta -1)
\sum_{i=1}^I \Delta p_i \log^2 p_i\leq
2\Delta^2  \log^2 (I\vee 8).
\]
\end{proof}

\begin{lemma}\label{prop_quad_mu}
When $p$ is the optimiser of $\mu$ in $\mathscr{P}$, and $\epsilon_i:=|1-q_i/p_i|$ has $\|\epsilon\|_\infty\leq \tfrac{1}{2}$ we have
\[
\mu_p - \|\epsilon\|_{2,\pi}^2
\leq \mu_q
\leq \mu_p - \frac{1}{3} \|\epsilon\|_{2,\pi}^2.
\]
\end{lemma}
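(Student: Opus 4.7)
The plan is to Taylor-expand $\mu_q$ about the optimiser $p$ in the direction $\eta := q-p$, kill the first-order term using the interior Lagrangian stationarity of $p$ coming from Proposition \ref{prop_positive_p}, and then sandwich the quadratic remainder on both sides using $\|\epsilon\|_\infty \leq \tfrac{1}{2}$.

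Concretely, set $c_i := \sum_{\alpha \in \D} \frac{k(\alpha)}{N} \alpha_i^+$ and $F(x) := -x\log x$, so that $F'(x) = -\log x - 1$, $F''(x) = -1/x$, and $\mu_r = \sum_i c_i F(r_i)$ for every $r \in \mathscr{P}$. Taylor's formula with Lagrange remainder applied coordinatewise gives
\[
F(q_i) - F(p_i) \;=\; F'(p_i)(q_i - p_i) \;-\; \frac{(q_i - p_i)^2}{2\xi_i}
\]
for some $\xi_i$ between $p_i$ and $q_i$. Writing $(q_i-p_i)^2 = p_i^2 \epsilon_i^2$ and $\pi_p(i) = c_i p_i$, summing against $c_i$ yields
\[
\mu_q - \mu_p \;=\; \nabla \mu(p) \cdot \eta \;-\; \tfrac{1}{2}\sum_{i=1}^{I} \pi_p(i)\, \frac{p_i\, \epsilon_i^2}{\xi_i}.
\]

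For the linear term, the stationarity condition for the Lagrangian $\mathcal{L}$ in Proposition \ref{prop_positive_p} reads $c_i F'(p_i) = \sum_{\alpha \in \D} \bigl(\lambda_\alpha^+ \alpha_i^+ + \lambda_\alpha^- \alpha_i^-\bigr)$ at every $i$, because $p$ lies in the relative interior of $\mathscr{P}$ (all $p_i \geq (\Delta I)^{-\Delta^2}$). Since $p, q \in \mathscr{P}$ share the affine constraints of \eqref{eq_p_def}, the difference $\eta$ satisfies $\sum_i \alpha_i^\pm \eta_i = 0$ for every $\alpha \in \D$, so $\nabla \mu(p) \cdot \eta$ collapses to zero by direct substitution. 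For the quadratic term, $\epsilon_i \leq \tfrac{1}{2}$ forces $\xi_i \in [p_i/2, 3p_i/2]$, hence $p_i / \xi_i \in [2/3, 2]$, so
\[
\tfrac{1}{3}\|\epsilon\|_{2,\pi_p}^2 \;\leq\; \tfrac{1}{2}\sum_{i=1}^{I} \pi_p(i)\, \frac{p_i\, \epsilon_i^2}{\xi_i} \;\leq\; \|\epsilon\|_{2,\pi_p}^2,
\]
which, once negated, is exactly the claimed two-sided bound on $\mu_q - \mu_p$.

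The only step needing care is the first-order cancellation: it relies on genuine KKT equalities rather than the inequality form, which in turn requires $p$ to be strictly positive so that no non-negativity constraint is active. This is precisely the content of Proposition \ref{prop_positive_p}, so once it is invoked the remainder of the argument is a direct application of Taylor's theorem to $F$ under the hypothesis $\|\epsilon\|_\infty \leq \tfrac{1}{2}$.
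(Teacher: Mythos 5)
Your proof is correct and takes essentially the same route as the paper's: a Taylor expansion of $\mu_q=\sum_i c_iF(q_i)$ about the optimiser with Lagrange remainder, cancellation of the first-order term by stationarity, and a two-sided bound on the quadratic term via $p_i/\xi_i\in[\nicefrac{2}{3},2]$ when $\|\epsilon\|_\infty\leq\tfrac12$, yielding exactly the constants $\tfrac13$ and $1$. The only difference is cosmetic: you justify the first-order cancellation explicitly through the Lagrangian equalities and the affine constraints satisfied by $q-p$, where the paper simply asserts zero first derivatives at the optimiser.
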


\begin{proof}
By assuming $p$ is an optimiser of $\mu$ in $\mathscr{P}$ we have zero first derivatives and second derivative as follows:
\[
\frac{\partial^2 \mu_q}{\partial q_i \partial q_j}\Bigg|_{q=p}
=
-\mathbbm{1}_{i=j}
\sum_{\al \in \D} \frac{k(\al)}{N} \cdot \frac{\al_i^+}{p_i}
\]
which implies by Taylor's theorem with Lagrange remainder that for some $\eta_i$ between $p_i$ and $q_i$
\[
\mu_q= \mu_p-\frac{1}{2} \sum_{i=1}^I\sum_{\al \in \D} \frac{k(\al)}{N}  \frac{\al_i^+}{\eta_i} (q_i-p_i)^2
=\mu_p-\frac{1}{2}\mathbb{E}_\pi\left(
\frac{(q_i-p_i)^2}{\eta_i p_i}
\right)
\]
so we have the result after observing
\[
\frac{\|\epsilon\|_{2,\pi}^2}{1+\|\epsilon\|_\infty}\leq
\mathbb{E}_\pi\left(
\frac{(1-\nicefrac{q_i}{p_i})^2}{\nicefrac{\eta_i}{p_i}}
\right)
\leq \frac{\|\epsilon\|_{2,\pi}^2}{1-\|\epsilon\|_\infty}.
\]
\end{proof}

\end{appendices}

\end{document}